\documentclass[11pt]{amsart}
\usepackage{amsmath,amssymb,amsthm,a4wide,scalerel}
\usepackage{mathabx}
\usepackage{color}
\usepackage{duckuments}
\usepackage{xcolor,graphicx}
\usepackage{mathrsfs}
\usepackage{tikz,pgfplots}
\usepackage{subcaption}
\usepackage[normalem]{ulem}
\usepackage{hyperref}
\usepackage{dsfont}
\usepackage{comment}
\usepackage{enumitem}
\usepackage{ctable} 
\usepackage{cases}

\newtheorem{theorem}{Theorem}
\newtheorem{lemma}[theorem]{Lemma}
\newlist{sublemmas}{enumerate}{1}
\setlist[sublemmas]{
  label=\textit{(\roman*)},
  ref=\thelemma\textit{(\roman*)},
  leftmargin=*,
  align=right,
  labelsep=0.5em
}
\newtheorem{proposition}[theorem]{Proposition}
\newtheorem{remark}[theorem]{Remark}
\newlist{subremarks}{enumerate}{1}
\setlist[subremarks]{
  label=\textit{(\roman*)},
  ref=\theremark\textit{(\roman*)},
  leftmargin=*,
  align=right,
  labelsep=0.5em
}
\newtheorem{corollary}[theorem]{Corollary}
\newtheorem{defi}[theorem]{Definition}

\newtheorem{assumption}{Assumption}
\newlist{subassumptions}{enumerate}{1}
\setlist[subassumptions]{
  label=\textit{(\roman*)},
  ref=\theassumption\textit{(\roman*)},
  leftmargin=*,
  align=right,
  labelsep=0.5em
}
\newtheorem{example}{Example}

\newcommand{\dt}{\tau}
\newcommand{\uk}{{\underline{k}}}

\newcommand{\R}{{\mathbb R}}
\newcommand{\N}{{\mathbb N}}
\newcommand{\E}{{\mathbb E}}
\newcommand{\PP}{{\mathbb P}}

\newcommand{\e}{\mathrm{e}}
\newcommand{\abs}[1]{\left\lvert #1 \right\rvert}
\providecommand{\norm}[1]{\left\lVert#1\right\rVert}
\newcommand*\dd{\mathop{}\!\mathrm{d}}

\newcommand{\mse}[1]{\norm{ #1 }_{L^2(\Omega)}}

\newcommand{\dr}{b}
\newcommand{\drf}{a}
\newcommand{\di}{g}
\newcommand{\dif}{f}
\newcommand{\distar}{\di^{\star}}
\newcommand{\drstar}{\dr^{\star}}
\newcommand{\difstar}{\dif^{\star}}
\newcommand{\drfstar}{\drf^{\star}}
\newcommand{\ustar}{u^{\star}}
\newcommand{\util}{u^{\text{DP}}}
\newcommand{\Ztil}{\widetilde{Z}}
\newcommand{\vtil}{\widetilde{v}}
\newcommand{\dbt}{\delta\beta}
\newcommand{\UMY}{U_m(y)}
\newcommand{\Util}{U^{\text{DP}}}
\newcommand{\Ustar}{U^\star}

\usepackage{color}

\author{Charles-Edouard Br\'ehier}
              \address{Universite de Pau et des Pays de l'Adour, E2S UPPA, CNRS, LMAP, Pau, France}
              \email{charles-edouard.brehier@univ-pau.fr}

\author{David Cohen}
              \address{Department of Mathematical Sciences,
              Chalmers University of Technology and University of Gothenburg, 41296~Gothenburg, Sweden}
              \email{\tt david.cohen@chalmers.se}

\author{Gijs Custers}
              \address{Department of Mathematical Sciences,
              Chalmers University of Technology and University of Gothenburg, 41296~Gothenburg, Sweden}
              \email{\tt gijs.custers@chalmers.se}

\begin{document}

\title[Domain preserving schemes for SPDE\MakeLowercase{s} driven by a standard Brownian motion]{Domain preserving splitting schemes for a class of SPDE\MakeLowercase{s} driven by a standard Brownian motion}

\begin{abstract}
We consider a class of SPDEs driven by a standard real-valued Brownian motion, with drift and diffusion coefficients such that there exists a unique mild solution taking values in the interval $[-1,1]$ almost surely.
To preserve this qualitative property of the exact solution, we propose a domain preserving Lie--Trotter splitting scheme: for any choice of the time-step size, the numerical solution takes values in the interval $[-1,1]$ almost surely. Furthermore, we prove mean-square convergence with rate $1/2-$ for the domain preserving Lie--Trotter scheme. These theoretical results are illustrated with numerical experiments.
\end{abstract}

\maketitle

{\small\noindent
{\bf AMS Classification.} 60H10, 60H15, 60H35, 65C30.

\bigskip\noindent{\bf Keywords.} Stochastic partial differential equations. Geometric numerical integration. Explicit splitting schemes. Domain preserving numerical schemes. Strong convergence.

\section{Introduction}\label{sec-intro}

Stochastic partial differential equations (SPDEs) are popular models used to describe a wide range of processes in various scientific fields,
see e.\,g. \cite{MR4458524,MR3288853,MR3236753,MR1500166,DalangSanz2026,MR4807583,MR3308418,MR2329435}.
In several applications, the solutions of such SPDEs take values in a given bounded interval almost surely. 
A first example is the stochastic Nagumo equation, see for instance~\cite{MR3907917}.
This SPDE is used to model action potential propagation on nerve fibres, which indicates that its solutions take values in the interval $[0,1]$ almost surely.
The particular noise term used in this model is biologically motivated in~\cite{MR3987335}.
Another example is the stochastic Fisher--KPP equation, which models population growth and wave propagation, see~\cite{MR4259661}: if $K>0$ is the habitat capacity, then, the solution to this SPDE takes values in the interval $[0,K]$ almost surely. Ensuring that numerical approximations of these SPDEs remain in the appropriate interval is important. Standard time integrators such as the semi-implicit Euler--Maruyama scheme or the stochastic exponential Euler scheme may leave the given interval.
This highlights the need for domain preserving schemes for this type of SPDEs, which guarantee that numerical approximations remain in a given interval almost surely.

In this article, we consider parabolic semilinear SPDEs (interpreted in the It{\^o} sense) in the $d$-dimensional spatial domain $\mathcal{D}=(0,1)^d$, with homogeneous Dirichlet boundary conditions on $\partial\mathcal{D}$, driven by a standard real-valued Brownian motion $\beta$, and an initial value $u_0$:
\begin{equation}\label{eq-SPDEintro}
\left\lbrace
\begin{aligned}
&        \dd u(t,x) = \bigl( \Delta u(t,x) + \dr (t,x,u(t,x)) \bigr)\dd t + \di (t,x,u(t,x))\dd\beta(t), \quad t\in(0,T], ~x\in\mathcal{D}, \\
&        u(t,x) = 0, \quad t\in[0,T],~x\in\partial\mathcal{D}, \\
&        u(0,x) = u_0(x), \quad x\in\overline{\mathcal{D}}.
\end{aligned}
\right.
\end{equation}
Under the condition that $\dr(t,x,\pm1)=\di(t,x,\pm1)=0$ for all $t\ge 0$ and $x\in\overline{\mathcal{D}}$, and under appropriate regularity assumptions on the coefficients $\dr$ and $\di$, if the initial value $u_0$ takes values in the domain $[-1,1]$, then the solution to~\eqref{eq-SPDEintro} is guaranteed to verify $u(t,x)\in[-1,1]$ for all $t\ge 0$ and $x\in\overline{\mathcal{D}}$ almost surely.

The aim of this work is to design and analyse a class of time integrators for the SPDE~\eqref{eq-SPDEintro} taking values in the domain $[-1,1]$ almost surely. Such time integrators are said to be domain preserving. The proposed numerical method is based on a Lie--Trotter splitting strategy: the SPDE~\eqref{eq-SPDEintro} is decomposed into the subsystems
\begin{align}
&    \dd v_1(t,x) = \dr(t,x,v_1(t,x))\dd t + \di(t,x,v_1(t,x))\dd\beta(t), \label{eq-SDEintro} \\
&    \dd v_2(t,x) = \Delta v_2(t,x)\dd t, \label{eq-HEintro}
\end{align}
where the linear heat equation~\eqref{eq-HEintro} is combined with homogeneous Dirichlet boundary conditions as in~\eqref{eq-SPDEintro}.
The stochastic differential equation (SDE)~\eqref{eq-SDEintro} is parameterised by the position variable $x\in\overline{\mathcal{D}}$ and is discretised by applying a domain preserving numerical scheme introduced in the recent work~\cite{manu}. The deterministic linear heat equation~\eqref{eq-HEintro} is solved exactly. The domain preserving Lie--Trotter (DPLT) splitting scheme is given by~\eqref{eq-dplt}, described in more detail in Section~\ref{sec-bp}.

In this article, we prove that the time integrator~\eqref{eq-dplt} is domain preserving for any choice of the time-step size, see Corollary~\ref{corodpdplt},
and that it converges in the mean-square sense with rate $1/2-$ to the solution of the SPDE~\eqref{eq-SPDEintro}, see Corollary~\ref{lecorollaire}.

Note that the existence and uniqueness of a mild solution~\eqref{eq-SPDEintro} with values in $[-1,1]$ (see Definition~\ref{defi-mild}) is, to the best of our knowledge, not known in the literature. This property, see Proposition~\ref{prop-exuniqdp}, is one of the main contributions of this article. To prove it, we consider a modified version~\eqref{eq-SPDEstar} of the SPDE~\eqref{eq-SPDEintro} with extensions for $u\in\R$ denoted by $\drstar(t,x,u)$ and $\distar(t,x,u)$ of the drift and diffusion coefficients $\dr(t,x,u)$ and $\di(t,x,u)$ which are defined only for $u\in[-1,1]$. We refer to Section~\ref{sec-sol} for the details. We first define a version of the DPLT splitting scheme for the modified SPDE~\eqref{eq-SPDEstar}, and by domain preservation (see Proposition~\ref{prop-dplt}) we show it coincides with the DPLT scheme given by~\eqref{eq-dplt}, defined using the original coefficients $\dr$ and $\di$. This argument also shows that the DPLT splitting scheme~\eqref{eq-dplt} is domain preserving, see Corollary~\ref{corodpdplt}. We then prove the main technical result of this work, Theorem~\ref{thm-mainthm}, which shows convergence of the DPLT scheme for the modified SPDE~\eqref{eq-SPDEstar} with rate $1/2-$. Combining the convergence result given by Theorem~\ref{thm-mainthm} with the domain preservation property given by Proposition~\ref{prop-dplt} then shows Proposition~\ref{prop-exuniqdp}. The proof of the mean-square convergence of the DPLT splitting scheme~\eqref{eq-dplt} to the solution of the SPDE~\eqref{eq-SPDEintro} then follows by a straightforward argument.

To the best of our knowledge, this article is the first where strong convergence of a domain preserving numerical scheme is established for the general class~\eqref{eq-SPDEintro} of SPDEs. A weak approximation domain preserving scheme has been proposed in the work~\cite{ulander2025boundary}, using a different approach.
However, the literature contains other relevant contributions on domain and positivity preserving schemes for SDEs and SPDEs.
Our construction of the DPLT scheme for the SPDE~\eqref{eq-SPDEintro} heavily relies on the domain preserving schemes studied in the recent work~\cite{manu} for stochastic differential equations. Other related recent works are~\cite{MR4993436,MR4737060}, and we refer to the list of references of~\cite{manu} for other related works.
For SPDEs, positivity preserving temporal and spatial discretisation schemes have been studied in several recent works. First, a fully-discrete positivity preserving scheme for a SPDE in spatial dimension $1$ driven by space-time white noise has been proposed and studied in~\cite{MR4780408}, using finite differences for the spatial approximation. For SPDEs driven by a purely time-dependent Brownian motion $\beta$, a positivity preserving time integrator has been presented in~\cite{MR4729657}. The article~\cite{Djurdjevac_2026} is focused on the preservation of positivity for the spatial approximation performed by finite element methods, and~\cite{hearder2026fullydiscretenonnegativitypreservingfem} then studies a fully-discrete version.

This article is organised as follows. Section~\ref{sec-set} is devoted to the presentation of the setting, and in particular with the statement of the assumptions on the coefficients $\dr$ and $\di$, the description of the modified version~\eqref{eq-SPDEstar} of the SPDE~\eqref{eq-SPDE}, and definitions and properties of the mild solutions to these equations. In Section~\ref{sec-bp}, we present the construction of the domain preserving Lie--Trotter splitting scheme~\eqref{eq-dplt}, and its main properties: domain preservation (Proposition~\ref{prop-dplt}) and convergence (Theorem~\ref{thm-mainthm} and Corollary~\ref{lecorollaire}). We also present numerical illustrations of the main results in Section~\ref{subNumExp}. The proofs of auxiliary results and of Theorem~\ref{thm-mainthm}, which rely on a continuous-time version of the scheme, are provided in Section~\ref{sec-proofs}.
Section~\ref{sec:genZ} presents some generalisations of the construction and analysis of the DPLT splitting scheme in three directions: for SPDEs with a Stratonovich interpretation of the noise (Section~\ref{sec:Strato}), for SPDEs driven by multidimensional noise (Section~\ref{sec:mulNoise}) and for systems of SPDEs (Section~\ref{sec:syst}). Covering the second and third generalisations would require minor modifications in the analysis, but this is omitted for simplicity of notation. We validate the properties of the DPLT splitting scheme in these contexts with numerical experiments.

\section{Setting}\label{sec-set}

In this section, we present the setting and necessary assumptions on the considered SPDE. Next, we recall some useful properties of the heat kernel in ${[0,1]}^d$.
We then present mild solutions to the SPDE and their properties. Finally, we introduce an extended version of the coefficients and of the SPDE that will be used to define the proposed numerical scheme in the next section.

\subsection{Introducing the SPDE}

Let $T\in(0,\infty)$ denote a finite time horizon. The spatial domain is denoted by $\mathcal{D}={(0,1)}^d$ in arbitrary dimension $d\in\N$, and $\Delta = \partial^2_{11} + \ldots + \partial^2_{dd}$ denotes the Laplace operator. In addition, the closure of $\mathcal{D}$ is denoted by $\overline{\mathcal{D}}=[0,1]^d$ and its boundary is given by $\partial\mathcal{D}=\overline{\mathcal{D}}\setminus\mathcal{D}$.
The SPDE is driven by a standard real-valued Brownian motion denoted by $\beta$ and defined on a probability space $(\Omega,\mathcal{F},\PP)$ satisfying the usual conditions. We consider the following class of stochastic partial differential equations interpreted in the Itô sense:
\begin{equation}\label{eq-SPDE}
\left\lbrace
\begin{aligned}
&        \dd u(t,x) = \bigl( \Delta u(t,x) + \dr (t,x,u(t,x)) \bigr)\dd t + \di (t,x,u(t,x))\dd\beta(t), \quad t\in(0,T], ~x\in\mathcal{D}, \\
&        u(t,x) = 0, \quad t\in[0,T],~x\in\partial\mathcal{D}, \\
&        u(0,x) = u_0(x), \quad x\in\overline{\mathcal{D}}.
\end{aligned}
\right.
\end{equation}
Further assumptions on the drift coefficient $\dr$, the diffusion coefficient $\di$, and the initial value $u_0$ are given below.

Let us denote the standard Euclidean norm on $\mathbb R^d$ by $\norm{\cdot}$. Further, we denote by $\mathcal{C}_0^0(\overline{\mathcal{D}})$ the space of continuous functions $h:\overline{\mathcal{D}}\to\R$ that vanish on the boundary of $\mathcal{D}$, i.\,e. such that $h(x)=0$ for all $x\in\partial\mathcal{D}$.

Throughout this paper, we impose the following assumptions on the initial value $u_0$ of the SPDE~\eqref{eq-SPDE}.
\begin{assumption}\label{ass-ic}
    The initial value $u_0$ is a deterministic function, which satisfies $u_0\in \mathcal{C}_0^0(\overline{\mathcal{D}})$, and verifies $u_0(x)\in[-1,1]$ for all $x\in\overline{\mathcal{D}}$.
\end{assumption}

\begin{remark}\label{rmk-generalisations1}
One could consider random $\mathcal{F}_0$-measurable initial conditions $u_0$ with some minor modifications in the setting and proofs below.
\end{remark}

We will also need the following assumptions on the drift and diffusion coefficients of the SPDE~\eqref{eq-SPDE}.
\begin{assumption}\label{ass-driftdiff}
    We assume that the drift and diffusion coefficients $\dr,\di\colon[0,\infty)\times\overline{\mathcal{D}}\times[-1,1]\to\R$ of the SPDE~\eqref{eq-SPDE} are continuous mappings, which satisfy the conditions
    \begin{subassumptions}
    \item\label{ass-driftdiff-vanish} $\dr(t,x,\pm1) = \di(t,x,\pm1) = 0$ for all $t\in[0,\infty)$ and $x\in\overline{\mathcal{D}}$.
    \item[]\hspace{-1cm} In addition, we impose the following regularity for the coefficients of the SPDE~\eqref{eq-SPDE}:
    \item\label{ass-driftdiff-regt} For all $x\in\overline{\mathcal{D}}$ and $u\in[-1,1]$, the mappings $\dr(\cdot,x,u)$ and $\di(\cdot,x,u)$ are H{\"o}lder continuous with exponent $\frac{1}{2}$ on $[0,\infty)$.
    In addition, for all $T\in(0,\infty)$, there exists $C(T)\in(0,\infty)$ such that for all $s,t\in[0,T]$, one has
    \begin{equation*}
        \sup_{x\in\overline{\mathcal{D}}}~\sup_{u\in[-1,1]}~\Bigl(\abs{\dr(t,x,u) - \dr(s,x,u)} + \abs{\di(t,x,u) - \di(s,x,u)}\Bigr) \leq C(T)\abs{t-s}^\frac{1}{2}.
    \end{equation*}
    \item\label{ass-driftdiff-regu} For all $t\in[0,\infty)$ and $x\in\overline{\mathcal{D}}$, the mappings $\dr(t,x,\cdot)$ and $\di(t,x,\cdot)$ are of class $\mathcal{C}^2$ on $[-1,1]$, and the mappings $\partial_u^2\dr$ and $\partial_u^2\di$ are continuous on $[0,\infty)\times\overline{\mathcal{D}}\times[-1,1]$.
    In addition, for all $T\in(0,\infty)$, there exists $C(T)\in(0,\infty)$ such that
    \begin{equation*}
    \sup_{t\in[0,T]}~\sup_{x\in\overline{\mathcal{D}}}~\sup_{u\in[-1,1]}~\Bigl(\abs{\partial_u^2\dr(t,x,u)}+\abs{\partial_u^2\di(t,x,u)} \Bigr)\le C(T).
    \end{equation*}
    \end{subassumptions}
\end{assumption}

\begin{remark}\label{rmk-generalisations2}
Note that, instead of the interval $[-1,1]$, one could consider initial values and solutions with values in more general domains $[L,R]$, where the condition $L\leq0\leq R$ is required to ensure compatibility with the homogeneous Dirichlet boundary conditions. This would require Assumption~\ref{ass-ic} and~\ref{ass-driftdiff} to be modified. To simplify the presentation we only consider the case $L=-1$ and $R=1$.
\end{remark}

Under Assumption~\ref{ass-driftdiff}, we can factorise the drift and diffusion coefficients $\dr$ and $\di$ as specified in the following lemma, which is a direct consequence of~\cite[Lemma~2]{manu}. To this end, we define the function $\sigma(u)=(u-1)(u+1)$ for $u\in[-1,1]$.

\begin{lemma}\label{la-driftdiffdecomp}
    Under Assumption~\ref{ass-driftdiff}, there exist continuous mappings $\drf,\dif\colon[0,\infty)\times\overline{\mathcal{D}}\times[-1,1]\to\R$ such that for all $(t,x,u)\in[0,\infty)\times\overline{\mathcal{D}}\times[-1,1]$, one has
    \begin{equation*}
        \dr(t,x,u) = \drf(t,x,u)\sigma(u)\quad\text{and}\quad \di(t,x,u) = \dif(t,x,u)\sigma(u).
    \end{equation*}
	In addition, for all $T\in(0,\infty)$, there exists $C(T)\in(0,\infty)$ such that 
	\begin{equation*}
	\sup_{t\in[0,T]}~\sup_{x\in\overline{\mathcal{D}}}~\sup_{u\in[-1,1]}~\Bigl(\abs{\drf(t,x,u)}+\abs{\dif(t,x,u)} \Bigr)\le C(T).
	\end{equation*}
\end{lemma}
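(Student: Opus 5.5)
The factorisation comes from dividing $\dr$ and $\di$ by $\sigma$ away from $u=\pm1$ and extending the quotients continuously to the endpoints. The uniform bound will then follow from a Taylor/integral representation of the quotient in terms of $\partial_u\dr$, $\partial_u\di$ and their second derivatives. Everything is done for a fixed $(t,x)$ and then made uniform on $[0,T]\times\overline{\mathcal{D}}$. This mirrors \cite[Lemma~2]{manu}, applied with $(t,x)$ as parameters. We only treat $\dr$; $\di$ is identical.

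\textbf{Step 1 (construction).} Define $\drf(t,x,u)=\dr(t,x,u)/\sigma(u)$ for $u\in(-1,1)$. At the endpoints set
\begin{equation*}
\drf(t,x,1)=\tfrac12\partial_u\dr(t,x,1),\qquad \drf(t,x,-1)=-\tfrac12\partial_u\dr(t,x,-1),
\end{equation*}
which is the limit dictated by $\sigma'(\pm1)=\pm2$ and $\dr(t,x,\pm1)=0$ (Assumption~\ref{ass-driftdiff-vanish}). With these values the identity $\dr=\drf\sigma$ holds on all of $[-1,1]$.

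\textbf{Step 2 (an integral formula).} On $[0,1]$, Taylor's formula with integral remainder at $u=1$ and $\dr(t,x,1)=0$ give
\begin{equation*}
\dr(t,x,u)=(u-1)\int_0^1\partial_u\dr(t,x,1+\theta(u-1))\dd\theta .
\end{equation*}
Hence
\begin{equation*}
\drf(t,x,u)=\frac{1}{u+1}\int_0^1\partial_u\dr(t,x,1+\theta(u-1))\dd\theta
\end{equation*}
for $u\in[0,1]$, and this expression agrees with the endpoint value at $u=1$. The symmetric formula around $u=-1$ holds on $[-1,0]$. On each half the factor $1/(u\pm1)$ is bounded by $1$, so continuity of $\drf$ in $(t,x,u)$ reduces to continuity of $\partial_u\dr$, via dominated convergence in the $\theta$-integral.

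\textbf{Step 3 (continuity and bound of $\partial_u\dr$; the main obstacle).} The assumptions directly give only continuity of $\dr$ and of $\partial_u^2\dr$ jointly in $(t,x,u)$, so joint continuity and uniform boundedness of $\partial_u\dr$ have to be derived. I would use the integral identity
\begin{equation*}
\partial_u\dr(t,x,u)=\dr(t,x,1)-\dr(t,x,-1)-\int_{-1}^{1}\bigl(\partial_u\dr(t,x,v)-\partial_u\dr(t,x,u)\bigr)\dd v \Big/ 2\cdot 2 ,
\end{equation*}
or more cleanly the following averaging argument. By the mean value theorem there is $\xi\in(-1,1)$ with $\partial_u\dr(t,x,\xi)=\tfrac12\bigl(\dr(t,x,1)-\dr(t,x,-1)\bigr)=0$, using Assumption~\ref{ass-driftdiff-vanish}. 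Therefore
\begin{equation*}
|\partial_u\dr(t,x,u)|\le 2\sup|\partial_u^2\dr|\le 2C(T),
\end{equation*}
by the bound in Assumption~\ref{ass-driftdiff-regu}. For joint continuity, write
\begin{equation*}
\partial_u\dr(t,x,u)=\tfrac12\int_{-1}^{1}\Bigl(\partial_u\dr(t,x,u)-\partial_u\dr(t,x,v)\Bigr)\dd v+\tfrac12\bigl(\dr(t,x,1)-\dr(t,x,-1)\bigr)
\end{equation*}
and express the first integrand as $\int_v^u\partial_u^2\dr(t,x,w)\dd w$. This gives $\partial_u\dr$ as an integral of the jointly continuous function $\partial_u^2\dr$ plus the continuous term $\dr(t,x,\pm1)$, so it is jointly continuous.

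\textbf{Step 4 (conclusion).} Combining Steps 2 and 3, on each half-interval $|\drf(t,x,u)|\le\sup|\partial_u\dr|\le 2C(T)$ uniformly over $[0,T]\times\overline{\mathcal{D}}\times[-1,1]$, and $\drf$ is continuous. Doing the same for $\di$ yields $\dif$ and the stated bound.
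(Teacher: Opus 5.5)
Your proof is correct, but it takes a different route from the paper. The paper, following the proof of \cite[Lemma~2]{manu}, uses a single global representation
\[
\drf(t,x,u)=\int_0^1\int_0^1 \partial_u^2\dr\bigl(t,x,\zeta\eta u+(1-\zeta)\eta+\eta-1\bigr)\eta\dd\zeta\dd\eta .
\]
It is obtained in three moves. First, write $\dr(t,x,u)=(u+1)\int_0^1\partial_u\dr(t,x,-1+\eta(u+1))\dd\eta$. Then subtract the vanishing quantity $(u+1)\int_0^1\partial_u\dr(t,x,-1+2\eta)\dd\eta=\tfrac{u+1}{2}\bigl(\dr(t,x,1)-\dr(t,x,-1)\bigr)=0$. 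Finally, apply the fundamental theorem of calculus once more to produce the factor $(u-1)$. Only $\partial_u^2\dr$ appears, so continuity of $\drf$ and the bound $\abs{\drf}\le\frac12\sup\abs{\partial_u^2\dr}$ follow immediately from Assumption~\ref{ass-driftdiff-regu}. No statement about $\partial_u\dr$ is needed, and there is no case split at $u=0$.

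You instead peel off one factor $(u\mp1)$ at a time by a first-order Taylor expansion on each half-interval, so your formula still contains $\partial_u\dr$. This forces your Step~3: Rolle's theorem plus an integral representation, to get joint continuity and the bound $2C(T)$ for $\partial_u\dr$. You are right that these are not directly among the hypotheses. It also forces an implicit pasting of the two half-interval formulas at $u=0$, which is harmless since both equal $\dr/\sigma$ there. Your route is more elementary and makes clear that $\drf=\dr/\sigma$ is uniquely determined, so both constructions give the same function. The paper's route buys a one-line verification and a sharper constant.

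One cosmetic point: the first displayed identity in Step~3 has a wrong or ambiguous normalisation. It should read $\partial_u\dr(t,x,u)=\tfrac12\bigl[\dr(t,x,1)-\dr(t,x,-1)-\int_{-1}^1(\partial_u\dr(t,x,v)-\partial_u\dr(t,x,u))\dd v\bigr]$. Simply delete it, since the second identity you give is correct and is the one you actually use.
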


\begin{proof}
For all $t\in[0,\infty)$ and $x\in\overline{\mathcal{D}}$, the mappings $\dr(t,x,\cdot)$ and $\di(t,x,\cdot)$ are of class $\mathcal{C}^2$ owing to Assumption~\ref{ass-driftdiff-regu}, and satisfy Assumption~\ref{ass-driftdiff-vanish}. Therefore, owing to the proof of~\cite[Lemma~2]{manu}, the factorisation is verified for the mappings $\drf$ and $\dif$ given as follows: for all $(t,x,u)\in [0,\infty)\times\overline{\mathcal{D}}\times[-1,1]$, one has
\begin{align*}
\drf(t,x,u)&=\int_0^1\int_0^1 \partial_u^2\dr\bigl(t,x,\zeta\eta u+(1-\zeta)\eta+\eta-1\bigr)\eta\dd \zeta\dd \eta\\
\dif(t,x,u)&=\int_0^1\int_0^1 \partial_u^2\di\bigl(t,x,\zeta\eta u+(1-\zeta)\eta+\eta-1\bigr)\eta\dd \zeta\dd \eta.
\end{align*}
The continuity of the mappings $\drf$ and $\dif$ and the uniform upper bound on $\drf$ and $\dif$ are direct consequences of Assumptions~\ref{ass-driftdiff-regt} and~\ref{ass-driftdiff-regu} respectively.
\end{proof}

\subsection{Heat kernel}\label{sec-HK}

In this subsection, we introduce the heat kernel $G_d$ and state some of its properties that will be useful for the definition and analysis of mild solutions and for the numerical analysis presented below. Some preliminary notation needs to be introduced. For all $k\in\N$ and $\xi\in[0,1]$, set $e_k(\xi) = \sqrt{2}\sin(k\pi\xi)$. For all $\uk = \left(k_1,\ldots,k_d\right)\in\N^d$ and $x = \left(x_1,\ldots,x_d\right)\in\overline{\mathcal{D}}$, set $e_\uk(x) = \prod_{j=1}^d e_{k_j}(x_j)$.
The $d$-dimensional heat kernel $G_d$ with homogeneous Dirichlet boundary conditions, see for instance~\cite[Theorem 2.1.4]{MR990239}, is defined as
\begin{equation}\label{eq-HK}
    G_d(t,x,y) = \sum_{\uk\in\N^d}\exp\left(-\norm{\uk}^2\pi^2t\right)e_\uk(x)e_\uk(y), \quad \forall~t\in(0,\infty), ~x,y\in\overline{\mathcal{D}}.
\end{equation}

Lemma~\ref{la-propHK} provides properties of the heat kernel $G_d$ defined in equation~\eqref{eq-HK}.

\begin{lemma}\label{la-propHK}
    The heat kernel $G_d$, defined by~\eqref{eq-HK}, satisfies the following properties:
    \begin{sublemmas}
        \item\label{la-propHK-bdd} For all $t\in(0,\infty)$ and $x,y\in\overline{\mathcal{D}}$, one has
        \begin{equation*}
        G_d(t,x,y) \geq 0\quad\text{and}\quad \sup_{t\in(0,\infty)}~\sup_{x\in\overline{\mathcal{D}}}~\int_{\mathcal{D}}G_d(t,x,y)\dd y \leq 1.
        \end{equation*}
        \item\label{la-propHK-semi} For all $s,t\in(0,\infty)$ and $x,y\in\overline{\mathcal{D}}$, one has
        \begin{equation*}
            G_d(t+s,x,y) = \int_\mathcal{D}G_d(t,x,z)G_d(s,z,y)\dd z.
        \end{equation*}
        \item\label{la-propHK-reg} There exists $C_d\in(0,\infty)$ such that for all $\alpha\in[0,1]$ and $0<s<t<\infty$, one has
        \begin{equation*}
            \underset{x\in\overline{\mathcal{D}}}\sup~\int_\mathcal{D}\bigl|G_d(t,x,y) - G_d(s,x,y)\bigr|\dd y \leq C_d\frac{\abs{t-s}^\alpha}{s^\alpha}.
        \end{equation*}
    \end{sublemmas}
\end{lemma}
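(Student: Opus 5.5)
The plan is to reduce everything to the one-dimensional Dirichlet heat kernel by means of the tensor-product structure. Since $e_\uk(x)=\prod_j e_{k_j}(x_j)$ and $\exp(-\norm{\uk}^2\pi^2t)=\prod_j\exp(-k_j^2\pi^2t)$, the series~\eqref{eq-HK} factorises as
\[
G_d(t,x,y)=\prod_{j=1}^d G_1(t,x_j,y_j).
\]
The series converges absolutely and uniformly for $t\geq t_0>0$, which justifies this rearrangement.

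\textbf{Properties (i) and (ii) for $G_1$.} For (ii), I will substitute the series into the right-hand side and integrate term by term. Uniform convergence allows this, and orthonormality of $(e_k)$ in $L^2(0,1)$ then collapses the double sum to the series for $G_1(t+s,\cdot,\cdot)$. For (i), I will use the method of images:
\[
G_1(t,\xi,\eta)=\sum_{m\in\mathbb{Z}}\bigl(p_t(\xi-\eta+2m)-p_t(\xi+\eta+2m)\bigr),
\]
where $p_t$ is the Gaussian kernel with variance $2t$. Equivalently, $G_1(t,\xi,\cdot)$ is the transition density of Brownian motion (generator $\partial_\xi^2$) killed on exiting $(0,1)$. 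This representation gives nonnegativity directly. It also gives $\int_0^1G_1(t,\xi,\eta)\dd\eta=\PP_\xi(\text{no exit before } t)\le 1$. Alternatively, nonnegativity follows from the parabolic maximum principle applied to $w(t,\xi)=\int_0^1 G_1(t,\xi,\eta)\phi(\eta)\dd\eta$ with $0\le\phi\le1$, which shows $0\le w\le 1$. The products of these one-dimensional statements then give (i) and (ii) for $G_d$.

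\textbf{Property (iii), the main obstacle.} By (i), the left-hand side is at most $2$, which settles the case $t-s\ge s$ (then $|t-s|^\alpha/s^\alpha\ge1$). If $t-s<s$, interpolation between exponent $0$ and exponent $1$ reduces everything to the case $\alpha=1$, because $\min(2,\,C(t-s)/s)\le C'\bigl((t-s)/s\bigr)^\alpha$. For $\alpha=1$, I will write
\[
G_d(t,x,y)-G_d(s,x,y)=\int_s^t\partial_rG_d(r,x,y)\dd r,
\]
with $\partial_rG_d=\Delta_yG_d$. It therefore suffices to show
\[
\sup_x\int_\mathcal{D}\abs{\partial_rG_d(r,x,y)}\dd y\le \frac{C}{r},
\]
since integrating this bound over $[s,t]$ gives $C\log(t/s)\le C(t-s)/s$.

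To prove this $C/r$ bound, I will use the product rule: $\partial_r G_d=\sum_j(\partial_rG_1)(r,x_j,y_j)\prod_{i\ne j}G_1(r,x_i,y_i)$. By (i), this reduces the bound to the one-dimensional estimate $\sup_\xi\int_0^1\abs{\partial_r G_1(r,\xi,\eta)}\dd\eta\le C/r$. I expect this to be the main technical step. I will handle it through the image sum, using $\int_{\R}\abs{\partial_rp_r(z)}\dd z=C/r$ by scaling. Since the images are disjoint translates or reflections, the sum of their $L^1(0,1)$ norms is bounded by the full-line integral, giving $C/r$ uniformly in $r$. The only care needed is to justify termwise differentiation of the image series, which follows from Gaussian decay for $r>0$. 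The constant obtained is independent of $\alpha$, as required.
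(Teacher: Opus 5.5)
Your proof is correct. For (i) and (ii) it follows the paper's route: both reduce to $d=1$ through $G_d(t,x,y)=\prod_j G_1(t,x_j,y_j)$. The paper simply cites the one-dimensional facts, whereas you sketch them. One small caution: the image series is alternating, so it does not show $G_1\ge 0$ by itself. Positivity and the mass bound come from the killed-Brownian-motion identification or from your maximum-principle alternative.

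For (iii) your route is genuinely different. The paper imports a pointwise two-Gaussian bound from Lemma~3 of \cite{brehier2026},
\[
\bigl|G_d(t,x,y)-G_d(s,x,y)\bigr|\le C_d\frac{\abs{t-s}^\alpha}{s^\alpha}\Bigl(s^{-\frac d2}e^{-c_d\frac{\norm{y-x}^2}{2s}}+t^{-\frac d2}e^{-c_d\frac{\norm{y-x}^2}{2t}}\Bigr),
\]
and then only has to integrate Gaussians over $\mathcal{D}$. You instead prove the $L^1$-in-$y$ smoothing estimate $\sup_x\int_{\mathcal D}\abs{\partial_rG_d(r,x,y)}\dd y\le C/r$ directly. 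The steps are:
\begin{itemize}
\item the product rule, combined with positivity and the mass bound of $G_1$, reduces it to dimension one;
\item in dimension one, as $\eta$ ranges over $(0,1)$ and $m$ over $\mathbb{Z}$, the arguments $\xi\mp\eta+2m$ tile $\R$, so the bound collapses to $\norm{\partial_r p_r}_{L^1(\R)}=C/r$;
\item integrating in $r$ gives $C\log(t/s)\le C(t-s)/s$;
\item interpolating with the trivial bound $2$ via $\min(2,Cu)\le\max(2,C)\,u^\alpha$ gives the claim.
\end{itemize}
This is the standard analytic-semigroup argument, made explicit for the cube. Its advantages are that it is self-contained, needs no pointwise kernel estimate, and visibly produces a constant independent of $\alpha$. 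Its cost is that it relies on the explicit image and tensor-product structure of $(0,1)^d$. The paper's cited bound is stronger, being pointwise in $y$, and it carries over to settings where only Gaussian bounds on the kernel and its time derivative are available.
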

\begin{proof}
    The properties~\textit{(i)} and~\textit{(ii)} are satisfied in dimension $d=1$, and in arbitrary dimension $d$ they then follow from the decomposition $G_d(t,x,y) = \prod_{j=1}^dG_1(t,x_j,y_j)$, see~\cite[Proposition 3.3.1]{DalangSanz2026} and~\cite{brehier2026} for instance.
    To prove the property~\textit{(iii)}, note that by e.\,g.~\cite[Lemma~3]{brehier2026} there exist $C_d,c_d\in(0,\infty)$ such that
    \begin{equation}\label{eq-HKregt}
        \bigl|G_d(t,x,y) - G_d(s,x,y)\bigr| \leq C_d\frac{\abs{t-s}^\alpha}{s^\alpha}\left( s^{-\frac{d}{2}}e^{-c_d\frac{\norm{y-x}^2}{2s}} + t^{-\frac{d}{2}}e^{-c_d\frac{\norm{y-x}^2}{2t}}\right),
    \end{equation}
    and that for any $c,\gamma\in(0,\infty)$ and $x\in\overline{\mathcal{D}}$, one has
    \begin{equation}\label{eq-gaussint}
        \gamma^\frac{-d}{2}\int_\mathcal{D}\exp\left(-c\frac{\norm{y-x}^2}{\gamma}\right)\dd y \leq \gamma^\frac{-d}{2}\int_{\R^d}\exp\left(-c\frac{\norm{z}^2}{\gamma}\right)\dd z = \left(\frac{\pi}{c}\right)^{\frac{d}{2}}.
    \end{equation}
    Property~\textit{(iii)} then follows by combining equations~\eqref{eq-HKregt} and~\eqref{eq-gaussint}.
\end{proof}

\subsection{Mild solutions and auxiliary SPDE}\label{sec-sol}

In this subsection, we present the notion of mild solutions to the SPDE~\eqref{eq-SPDE} and a well-posedness result.
\begin{defi}\label{defi-mild}
Let $T\in(0,\infty)$ and let Assumptions~\ref{ass-ic} and~\ref{ass-driftdiff} be satisfied.
A continuous adapted process $\left(u(t,x)\right)_{t\in[0,T], x\in\overline{\mathcal{D}}}$ is a mild solution to the SPDE~\eqref{eq-SPDE} if almost surely one has $u(0,x)=u_0(x)$ for all $x\in\overline{\mathcal{D}}$, if
    \begin{equation}\label{eq-mildsolDP}
        \PP\bigg(u(t,x)\in[-1,1], \forall t\in[0,T], \forall x\in\overline{\mathcal{D}}\bigg) = 1,
    \end{equation}
and if almost surely for all $t\in(0,T]$ and $x\in\overline{\mathcal{D}}$ one has
\begin{equation}
    \begin{split}\label{eq-mildsol}
    u(t,x) &= \int_\mathcal{D}G_d(t,x,y)u_0(y)\dd y + \int_0^t\int_\mathcal{D}G_d(t-s,x,y)\dr\left(s,y,u(s,y)\right)\dd y\dd s \\
    &\quad+ \int_0^t\int_\mathcal{D}G_d(t-s,x,y)\di\left(s,y,u(s,y)\right)\dd y\dd\beta(s).
    \end{split}
\end{equation}
\end{defi}

One of the main results of this article is the existence and uniqueness of a mild solution to the SPDE~\eqref{eq-SPDE}.
\begin{proposition}\label{prop-exuniqdp}
    Under Assumptions~\ref{ass-ic} and~\ref{ass-driftdiff}, there exists a unique mild solution $u$ to the SPDE~\eqref{eq-SPDE}. 
\end{proposition}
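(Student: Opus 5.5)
Following the strategy announced in the introduction, the plan is to pass through the auxiliary SPDE with extended coefficients. First, extend $\dr,\di$ to $\drstar,\distar\colon[0,\infty)\times\overline{\mathcal{D}}\times\R\to\R$ that are globally Lipschitz in $u$ (uniformly in $t\in[0,T]$ and $x$), $\frac12$-Hölder in $t$, and coincide with $\dr,\di$ on $[-1,1]$. One option is to set them equal to $0$ outside $[-1,1]$. This is Lipschitz because $\dr(t,x,\pm1)=\di(t,x,\pm1)=0$ and $\partial_u\dr,\partial_u\di$ are bounded on $[-1,1]$ by Assumption~\ref{ass-driftdiff-regu}. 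A second option is to use $\drfstar(t,x,u)\sigma(u)$ with a bounded Lipschitz truncation; this one is more convenient for the scheme. With globally Lipschitz coefficients, the standard Picard iteration in the space of adapted random fields with $\sup_{t,x}\E|u|^p<\infty$ gives existence and uniqueness of a mild solution $\ustar$ to~\eqref{eq-SPDEstar}. The iteration uses Lemma~\ref{la-propHK-bdd} for the deterministic convolution and Burkholder--Davis--Gundy together with Minkowski for the stochastic convolution, since the noise is only in time. Continuity of a modification then follows from Kolmogorov's criterion, using the time regularity of the kernel (Lemma~\ref{la-propHK-reg}) and spatial regularity estimates.

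The key step is to prove that $\ustar(t,x)\in[-1,1]$ for all $(t,x)$ almost surely. For this I would use the DPLT scheme for~\eqref{eq-SPDEstar}. The scheme takes a domain preserving step of~\cite{manu} for the SDE in $x$, then applies the heat semigroup $\int G_d(\dt,x,y)\cdot\dd y$. Domain preservation follows in two parts. The SDE step maps $[-1,1]$ into itself, by~\cite{manu}. The heat step preserves $[-1,1]$, since $G_d\ge0$ and $\int G_d\le1$ by Lemma~\ref{la-propHK-bdd}. This is the content of Proposition~\ref{prop-dplt}. Then I would take the strong convergence result (Theorem~\ref{thm-mainthm}): for each grid time $t_n$ and each $x$, the numerical solution converges to $\ustar(t_n,x)$ in $L^2(\Omega)$ as $\dt\to0$. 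A subsequence converges almost surely, and $[-1,1]$ is closed, so $\ustar(t,x)\in[-1,1]$ almost surely for each $t$ in a countable dense set of times (dyadic, refining the grid) and each rational $x$. Continuity of $\ustar$ then extends this to all $(t,x)$ simultaneously, which gives~\eqref{eq-mildsolDP}.

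Since $\drstar=\dr$ and $\distar=\di$ on $[-1,1]$, the process $\ustar$ satisfies~\eqref{eq-mildsol}, so $u=\ustar$ is a mild solution to~\eqref{eq-SPDE}. For uniqueness, any mild solution $u$ of~\eqref{eq-SPDE} takes values in $[-1,1]$ by definition. Hence it is also a mild solution of~\eqref{eq-SPDEstar}, and uniqueness for the Lipschitz problem (a Gronwall argument on $\sup_x\E|u-\ustar|^2$) gives $u=\ustar$.

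The main obstacle is Theorem~\ref{thm-mainthm}, the convergence of the DPLT scheme for the extended problem. There the domain preserving SDE step of~\cite{manu} must be shown to be a consistent, mean-square stable perturbation of an exponential/Euler step, uniformly in $x$. Assuming it is available, the remaining delicate point is passing from convergence at fixed $(t,x)$ to a simultaneous almost sure statement, which is handled by the countable dense set and path continuity.
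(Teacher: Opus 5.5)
Your proposal is correct and follows essentially the paper's route: you extend the coefficients to globally Lipschitz ones, combine domain preservation of the DPLT scheme with its mean-square convergence to $\ustar$ to get $\ustar\in[-1,1]$ pointwise, extend this to all $(t,x)$ by path continuity, and deduce uniqueness from uniqueness for the Lipschitz problem. The only minor variation is that you use the grid-point estimate at dyadic times (with a.s.\ convergent subsequences and an explicit countable dense set of $(t,x)$), whereas the paper uses the time-continuous estimate~\eqref{convTimeCont} for $\util$ with Borel--Cantelli along $\dt_n=T/2^n$; both arguments are valid.
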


The proof of Proposition~\ref{prop-exuniqdp} is given in Section~\ref{sec-proofexuniqdp}. The proof of uniqueness follows from a standard application of the Gr\"onwall inequality, whereas the proof of existence is more challenging: since the drift and diffusion coefficients $\dr$ and $\di$ are defined only on $[0,\infty)\times\overline{\mathcal{D}}\times [-1,1]$, it is necessary to prove the almost sure preservation of
the domain $[-1,1]$ from~\eqref{eq-mildsolDP} in order to make sense of the mild formulation~\eqref{eq-mildsol}. To establish the existence of a mild solution, we first introduce the modified version~\eqref{eq-SPDEstar} of the SPDE~\eqref{eq-SPDE} with extensions $\drstar$ and $\distar$ of the coefficients $\dr$ and $\di$ defined on $[0,\infty)\times\overline{\mathcal{D}}\times \R$. The mappings $\drstar$ and $\distar$ satisfy usual regularity conditions, such as global Lipschitz continuity with respect to the third variable, to ensure the existence of a unique mild solution. Second, we introduce a domain preserving time integrator and we prove its convergence to the solution of the modified SPDE~\eqref{eq-SPDEstar}. Finally, letting the time-step size go to $0$ then shows that the mild solution of~\eqref{eq-SPDEstar} satisfies the condition~\eqref{eq-mildsolDP}, and is thus a mild solution to the SPDE~\eqref{eq-SPDE} in the sense of Definition~\ref{defi-mild}.

There is no unique way to define the extensions $\drstar$ and $\distar$ of the coefficients $\dr$ and $\di$. The main requirements are to ensure that for all $t\in[0,\infty)$ and $x\in\overline{\mathcal{D}}$ one has
\[
\drstar(t,x,u)=\dr(t,x,u)\quad\text{and}\quad\distar(t,x,u)=\di(t,x,u),\qquad \forall~u\in[-1,1],
\]
and that the mappings $\drstar(t,x,\cdot)$ and $\distar(t,x,\cdot)$ are (uniformly) globally Lipschitz continuous on $\R$. In this article, the extensions $\drstar$ and $\distar$ of $\dr$ and $\di$ are defined as follows: for all $t\in[0,\infty)$, $x\in\overline{\mathcal{D}}$ and $u\in\R$, set 
\begin{equation}\label{eq-starcoeff}
    \drstar(t,x,u)=\mathds{1}_{|u|\leq1}\dr(t,x,u) + \mathds{1}_{|u|>1}\sin(\pi u) \quad \text{and} \quad \distar(t,x,u)=\mathds{1}_{|u|\leq1}\di(t,x,u) + \mathds{1}_{|u|>1}\sin(\pi u).
\end{equation}
The modified version of the SPDE~\eqref{eq-SPDE} is then defined as the following SPDE with globally Lipschitz coefficients:
\begin{equation}\label{eq-SPDEstar}
    \begin{cases}
        \dd \ustar(t,x) = \left( \Delta \ustar(t,x) + \drstar (t,x,\ustar(t,x)) \right)\dd t + \distar (t,x,\ustar(t,x))\dd\beta(t), \quad t\in(0,T], x\in\mathcal{D}, \\
        \ustar(t,x) = 0, \quad t\in[0,T], x\in\partial\mathcal{D}, \\
        \ustar(0,x) = u_0(x), \quad x\in\overline{\mathcal{D}}.
    \end{cases}
\end{equation}

The coefficients $\drstar$ and $\distar$ of the modified SPDE~\eqref{eq-SPDEstar} then inherit the regularity of the coefficients $\dr$ and $\di$ of the original SPDE~\eqref{eq-SPDE}
stated in Assumption~\ref{ass-driftdiff}. This is made more precise in the following remark.

\begin{remark}\label{rmk-driftdiffstar}
    The modified drift and diffusion coefficients $\drstar,\distar\colon[0,\infty)\times\overline{\mathcal{D}}\times\R\to\R$ of the modified SPDE~\eqref{eq-SPDEstar}, defined by~\eqref{eq-starcoeff}, are continuous and inherit the following properties from $\dr$ and $\di$:
    \begin{subremarks}
        \item\label{rmk-driftdiffstar-agree} For $(t,x,u)\in[0,\infty)\times\overline{\mathcal{D}}\times[-1,1]$, one has $\drstar(t,x,u) = \dr(t,x,u)$ and $\distar(t,x,u) = \di(t,x,u)$. In particular, for all $t\in[0,\infty)$ and $x\in\overline{\mathcal{D}}$, it holds that $\drstar(t,x,\pm1) = \distar(t,x,\pm1) = 0$.
        \item\label{rmk-driftdiffstar-regt} For all $T\in(0,\infty)$, there exists $C(T)\in(0,\infty)$ such that for all $s,t\in[0,T]$, one has
        \begin{equation*}
            \sup_{x\in\overline{\mathcal{D}}}\sup_{u\in\R}\bigg(\abs{\drstar(t,x,u) - \drstar(s,x,u)} + \abs{\distar(t,x,u) - \distar(s,x,u)}\bigg) \leq C(T)\abs{t-s}^\frac{1}{2}.
        \end{equation*}
    \end{subremarks}
\end{remark}
Furthermore, the mappings $\drstar,\distar$ defined by~\eqref{eq-starcoeff} satisfy the following global Lipschitz continuity property: for all $T\in(0,\infty)$, there exists $C(T)\in(0,\infty)$ such that for all $u,v\in\R$ one has
\begin{equation}\label{eq-Lipstar}
\underset{t\in[0,T]}\sup~\underset{x\in\overline{\mathcal{D}}}\sup~\bigg(\abs{\drstar(t,x,u) - \drstar(t,x,v)} + \abs{\distar(t,x,u) - \distar(t,x,v)}\bigg)\leq C(T)\abs{u-v}.
\end{equation}

Note that the mapping $\vartheta=\sin(\pi \cdot)$ in~\eqref{eq-starcoeff} satisfies the condition $\vartheta(\pm 1)=0$, and can thus be factorised by applying~\cite[Lemma~2]{manu}: there exists a continuous mapping $\theta:\R\to\R$ such that
\[
\sin(\pi u)=\vartheta(u)=\theta(u)\sigma(u),\qquad \forall~u\in\R.
\]
More precisely, owing to the proof of~\cite[Lemma~2]{manu}, the expression for $\theta$ is given by
\[
\theta(u)=\int_0^1\int_0^1 \vartheta''\bigl(\zeta\eta u+(1-\zeta)\eta+\eta-1\bigr)\eta\dd \zeta\dd \eta.
\]
Since $\vartheta''=-\pi^2\vartheta=-\pi^2\sin(\pi\cdot)$, the mapping $\theta$ is continuous and bounded on $\R$.

As a result, we can define the mappings $\drfstar,\difstar\colon [0,\infty)\times\overline{\mathcal{D}}\times\R\to\R$ such that for all $t\in[0,\infty)$, $x\in\overline{\mathcal{D}}$ and $u\in\R$ one has 
\begin{equation}\label{eq-afstar}
\drfstar(t,x,u)=\mathds{1}_{|u|\leq1}\drf(t,x,u) + \mathds{1}_{|u|>1}\frac{\sin(\pi u)}{\sigma(u)} \quad \text{and} \quad \difstar(t,x,u)=\mathds{1}_{|u|\leq1}\dif(t,x,u) + \mathds{1}_{|u|>1}\frac{\sin(\pi u)}{\sigma(u)}.
\end{equation}
The mappings $\drfstar$ and $\difstar$ are extensions of the coefficients $\drf$ and $\dif$ introduced in Lemma~\ref{la-driftdiffdecomp}.
Note that these mappings are continuous on $[0,\infty)\times\overline{\mathcal{D}}\times[-1,1]$. In addition, treating the cases $u\in[-1,1]$ and $u\notin[-1,1]$ separately shows that the following upper bounds are satisfied: for all $T\in(0,\infty)$, there exists $C(T)\in(0,\infty)$ such that
	\begin{equation}\label{eq-boundafstar}
	\sup_{t\in[0,T]}~\sup_{x\in\overline{\mathcal{D}}}~\sup_{u\in\R}~\Bigl(\abs{\drfstar(t,x,u)}+\abs{\difstar(t,x,u)} \Bigr)\le C(T).
	\end{equation}

Given arbitrary $T\in(0,\infty)$, we recall that a continuous adapted process $\left(\ustar(t,x)\right)_{t\in[0,T], x\in\overline{\mathcal{D}}}$ is a mild solution (in the classical sense)
to the SPDE~\eqref{eq-SPDEstar} if almost surely one has $u(0,x)=u_0(x)$ for all $x\in\overline{\mathcal{D}}$, and if almost surely for all $t\in(0,T]$ and $x\in\overline{\mathcal{D}}$ one has
\begin{equation}
    \begin{split}
    \ustar(t,x) &= \int_\mathcal{D}G_d(t,x,y)u_0(y)\dd y + \int_0^t\int_\mathcal{D}G_d(t-s,x,y)\drstar\left(s,y,\ustar(s,y)\right)\dd y\dd s \\
    &\quad+ \int_0^t\int_\mathcal{D}G_d(t-s,x,y)\distar\left(s,y,\ustar(s,y)\right)\dd y\dd\beta(s).
    \end{split}
\end{equation}
Applying a standard fixed point argument, using in particular the regularity condition from Remark~\ref{rmk-driftdiffstar-regt} of $\drstar$ and $\distar$ with respect to the first variable and the uniform global Lipschitz continuity property~\eqref{eq-Lipstar} with respect to the third variable, the following well-posedness result for the modified SPDE~\eqref{eq-SPDEstar} is obtained.

\begin{proposition}\label{prop-exuniqmod}
    Under Assumptions~\ref{ass-ic} and~\ref{ass-driftdiff}, assume that the coefficients $\drstar$ and $\distar$ are defined by~\eqref{eq-starcoeff}. For any $T\in(0,\infty)$, there exists a unique mild solution $\ustar$ to the modified SPDE~\eqref{eq-SPDEstar}.
\end{proposition}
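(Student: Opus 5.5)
We construct the mild solution of \eqref{eq-SPDEstar} by a Picard iteration in a space of random fields with bounded $p$-th moments, and then obtain a continuous modification by a Kolmogorov argument. Fix $T\in(0,\infty)$ and $p\ge 2$. Let $\mathcal{H}_p$ be the space of predictable random fields $v=(v(t,x))_{t\in[0,T],x\in\overline{\mathcal{D}}}$ with
\[
\norm{v}_{p,\lambda}=\sup_{t\in[0,T]}\sup_{x\in\overline{\mathcal{D}}}e^{-\lambda t}\bigl(\E|v(t,x)|^p\bigr)^{1/p}<\infty,
\]
where $\lambda>0$ will be chosen below. Define $\Gamma(v)(t,x)$ as the right-hand side of the mild formulation with $\ustar$ replaced by $v$.

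First, $\Gamma$ maps $\mathcal{H}_p$ into itself. The initial term is bounded by $1$ thanks to Assumption~\ref{ass-ic} and Lemma~\ref{la-propHK-bdd}. The coefficients have linear growth, since \eqref{eq-Lipstar} together with $\drstar(t,x,\pm1)=\distar(t,x,\pm1)=0$ gives $|\drstar(t,x,u)|+|\distar(t,x,u)|\le C(T)(1+|u|)$. For the drift term we apply Minkowski's integral inequality and use $\int_{\mathcal{D}}G_d\le 1$. For the stochastic term, which is driven by a single real Brownian motion, we apply the Burkholder--Davis--Gundy inequality to the scalar integrand $s\mapsto\int_{\mathcal{D}}G_d(t-s,x,y)\distar(s,y,v(s,y))\dd y$. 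Its $L^p(\Omega)$ norm is bounded by $\int_{\mathcal{D}}G_d(t-s,x,y)\norm{\distar(s,y,v(s,y))}_{L^p(\Omega)}\dd y$, again by Minkowski's inequality. No singularity of the kernel appears, because only $\sup_x\int G_d\le1$ is used; this is simpler than the space-time white noise setting.

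Second, $\Gamma$ is a contraction. By \eqref{eq-Lipstar}, the same estimates give
\[
\bigl\|\Gamma(v)-\Gamma(w)\bigr\|_{p,\lambda}\le C(T,p)\Bigl(\lambda^{-1}+\lambda^{-1/2}\Bigr)\norm{v-w}_{p,\lambda}.
\]
This follows from $\int_0^t e^{-\lambda(t-s)}\dd s\le\lambda^{-1}$ and, after BDG, $\bigl(\int_0^t e^{-2\lambda(t-s)}\dd s\bigr)^{1/2}\le(2\lambda)^{-1/2}$. Choosing $\lambda$ large yields a unique fixed point in $\mathcal{H}_p$. Uniqueness among continuous adapted mild solutions follows by the same estimate combined with Gr\"onwall's inequality. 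One first localises with stopping times, or notes directly that any continuous solution belongs to $\mathcal{H}_p$ via linear growth.

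The main obstacle is continuity in $(t,x)$, which the definition of a mild solution requires. The plan is to prove, for the fixed point $\ustar$,
\[
\E|\ustar(t,x)-\ustar(s,z)|^p\le C\bigl(|t-s|^{\gamma p}+\norm{x-z}^{\gamma p}\bigr)
\]
for some $\gamma>0$ and all $(t,x),(s,z)$ with $t,s$ bounded away from $0$, and then conclude with the Kolmogorov criterion for large $p$.

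For the time increments of the drift and noise terms, the integral over $[s,t]$ is estimated directly. The remaining part, involving $G_d(t-r,\cdot)-G_d(s-r,\cdot)$, is controlled by Lemma~\ref{la-propHK-reg} with $\alpha<1/2$, which makes $(s-r)^{-2\alpha}$ integrable inside the BDG bound. Spatial increments require an analogous estimate on $\int|G_d(t,x,y)-G_d(t,z,y)|\dd y$, obtained from the Gaussian bounds for the heat kernel in \cite{brehier2026}. The initial term $\int G_d(t,\cdot,y)u_0(y)\dd y$ is continuous on $[0,T]\times\overline{\mathcal{D}}$ and vanishes on $\partial\mathcal{D}$, because $u_0\in\mathcal{C}_0^0(\overline{\mathcal{D}})$ and the Dirichlet heat semigroup is a $C_0$-semigroup on that space. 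The convolution terms vanish at $t=0$ with moments $O(t^{1/2})$. Together these give continuity up to $t=0$ and at the boundary.
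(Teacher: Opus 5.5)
Your proposal is correct and is precisely the ``standard fixed point argument'' that the paper invokes without details: a Picard iteration in a weighted $\sup_{t,x}L^p(\Omega)$ norm based on \eqref{eq-Lipstar}, followed by a Kolmogorov argument for continuity. You do not need the time-H\"older regularity of Remark~\ref{rmk-driftdiffstar-regt} that the paper cites. Since $\drstar,\distar$ are bounded, any mild solution has bounded moments directly, so uniqueness needs no localisation. One small fix: get continuity at $t=0$ by applying Kolmogorov to the convolution part on all of $[0,T]\times\overline{\mathcal{D}}$, whose increment bounds are uniform down to $t=0$; the pointwise $O(t^{1/2})$ moment bound alone does not give pathwise continuity.
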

Note that the domain preserving scheme introduced in Section~\ref{sec-bp} is shown first to converge to the mild solution~$\ustar$ of the modified SPDE~\eqref{eq-SPDEstar}. This result is then used to show that, almost surely for all $t\in[0,T]$ and $x\in\overline{\mathcal{D}}$, one has $\ustar(t,x)\in[-1,1]$, and that finally $u=\ustar$ is a mild solution to the SPDE~\eqref{eq-SPDE} in the sense of Definition~\ref{defi-mild}.

\section{Domain preserving Lie--Trotter splitting scheme}\label{sec-bp}
In this section, we derive a class of explicit numerical schemes, first for the time discretisation of the modified SPDE~\eqref{eq-SPDEstar}, and ultimately of the SPDE~\eqref{eq-SPDE}.
These time integrators are domain preserving, which means that the numerical approximations remain, for any time-step size, almost surely, in the domain $[-1,1]$ of the SPDE~\eqref{eq-SPDEstar}
(see Proposition~\ref{prop-dplt} below). This geometric property of the numerical schemes is then used to prove   mean-square convergence with order $\frac12-$ to the mild solution of the modified SPDE~\eqref{eq-SPDEstar} (see Theorem~\ref{thm-mainthm}).
In Corollary~\ref{lecorollaire} below, we show the mean-square convergence of the domain preserving schemes with order $\frac12-$ to the mild solution of the original SPDE~\eqref{eq-SPDE}. 
We conclude this section with some numerical experiments illustrating the main qualitative and convergence properties of the proposed domain preserving schemes.

\subsection{A class of explicit domain preserving schemes}\label{subExp}

The objective of this subsection is to present the considered class of explicit domain preserving Lie--Trotter time integrators applied to the SPDE~\eqref{eq-SPDEstar}.
Given a final time $T\in(0,\infty)$ and an integer $M\in\N$, the time-step size is defined as $\dt=T/M$. Moreover, the equidistant time grid is defined by $t_m=m\dt$ for $m\in\{0,1,\ldots,M\}$, and the
Brownian increments are defined as $\dbt_m=\beta(t_{m+1})-\beta(t_m)$ for $m\in\{0,1,\ldots,M-1\}$.

The proposed domain preserving scheme is based on a Lie--Trotter splitting strategy combined with
the work~\cite{manu} on explicit domain preserving integrators for SDEs.
Given the numerical approximation
$u^\star_m\approx\ustar(t_m,\cdot)$ of the solution to the modified SPDE~\eqref{eq-SPDEstar} at the time-grid point $t_m$, for some $m\in\{0,1,\ldots,M-1\}$,
the numerical approximation $u^\star_{m+1}\approx\ustar(t_{m+1},\cdot)$ at the next time-grid point $t_{m+1}=t_m+\dt$ is obtained as follows.
\begin{itemize}
\item First, apply a domain preserving scheme to the auxiliary It\^o SDEs parametrised by $x\in\overline{\mathcal{D}}$ considered on the subinterval $[t_m,t_{m+1}]$:
\begin{equation}\label{eq:scheme-subsystem1}
\left\lbrace
\begin{aligned}
\dd v_{1}(t,x)&=\drstar(t_m,x,v_{1}(t,x))\dd t+\distar(t_m,x,v_{1}(t,x))\dd\beta(t)~,\quad t\in[t_m,t_{m+1}],~x\in\overline{\mathcal{D}},\\
v_{1}(t_m,x)&=u_{m}^\star(x),\quad x\in\overline{\mathcal{D}},
\end{aligned}
\right.
\end{equation}
with initial value $v_{1}(t_m,\cdot)=u_{m}^\star$ at the time-grid point $t_m$. The details of this critical step are given below.
\item Second, solve exactly the homogeneous deterministic heat equation with homogeneous Dirichlet boundary conditions considered on the subinterval $[t_m,t_{m+1}]$:
\begin{equation}\label{eq:scheme-subsystem2}
\left\lbrace
\begin{aligned}
\dd v_{2}(t,x)&=\Delta v_{2}(t,x)\,\dd t~,\quad t\in (t_m,t_{m+1}],~x\in{\mathcal{D}},\\
v_{2}(t,x)&=0~,\quad t\in [t_m,t_{m+1}],~x\in\partial\mathcal{D},\\
v_{2}(t_m,x)&=v_1(t_{m+1},x),\quad x\in\overline{\mathcal{D}},
\end{aligned}
\right.
\end{equation}
with initial value $v_{2}(t_m,\cdot)=v_1(t_{m+1},\cdot)$ at the time-grid point $t_m$, given by the numerical approximation at the time-grid point $t_{m+1}$ of the solution to the auxiliary subsystem~\eqref{eq:scheme-subsystem1} considered in the first step above.
\item Finally, the numerical approximation $u^\star_{m+1}$ at the time-grid point $t_{m+1}$ of the solution
to the modified SPDE~\eqref{eq-SPDEstar} is defined as
\[u^\star_{m+1}(x)=v_{2}(t_{m+1},x),\quad x\in\overline{\mathcal{D}},
\]
and is given by the solution at the time-grid point $t_{m+1}$ to the auxiliary subsystem~\eqref{eq:scheme-subsystem2} considered in the second step above.
\end{itemize}

For the second step, note that the exact solution to the homogeneous heat equation~\eqref{eq:scheme-subsystem2} has the following expression: for all $t\in(t_m,t_{m+1}]$ and $x\in\overline{\mathcal{D}}$, one has
$$
v_{2}(t,x)=\int_{\mathcal{D}}G_d(t-t_m,x,y)v_{2}(t_m,y)\dd y=\int_{\mathcal{D}}G_d(t-t_m,x,y)v_1(t_{m+1},y)\dd y.
$$
For the first step, the numerical approximation of the auxiliary subsystem~\eqref{eq:scheme-subsystem1} parametrised by $x\in\overline{\mathcal{D}}$ is performed with a domain preserving scheme from~\cite{manu}: for all $x\in\overline{\mathcal{D}}$, set
$$
v_{1,m+1}(x)=\Phi\left(\difstar(t_m,x,u^\star_m(x))^2\dt,\drfstar(t_m,x,u^\star_m(x))\dt+\difstar(t_m,x,u^\star_m(x))\dbt_m,u^\star_m(x)\right),
$$
where the integrator $\Phi\colon[0,\infty)\times\R\times[-1,1]\to\R$ is a mapping which satisfies the following assumptions.
\begin{assumption}\label{ass-Phi}
    The mapping $\Phi\colon[0,\infty)\times\R\times[-1,1]\to\R$ is of class $\mathcal{C}^3$ on $[0,\infty)\times\R\times[-1,1]$ and satisfies the following properties:
    \begin{subassumptions}
        \item\label{ass-Phi-dp} $\Phi(s,\gamma,v)\in[-1,1]$ for all $s\geq0,\gamma\in\R,v\in[-1,1]$.
        \item\label{ass-Phi-0} $\Phi(0,0,v)=v$ for all $v\in[-1,1]$.
        \item\label{ass-Phi-dg} $\partial_\gamma\Phi(0,0,v) = \sigma(v)$ for all $v\in[-1,1]$.
        \item\label{ass-Phi-Psi} $\partial_s\Phi(0,0,v) + \frac{1}{2}\partial_\gamma^2\Phi(0,0,v)=0$ for all $v\in[-1,1]$.
        \item\label{ass-Phi-bdd} There exists a constant $C\in(0,\infty)$ such that for all $s\geq0, \gamma\in\R, v\in[-1,1]$:
        \begin{equation*}
            \sum_{\alpha_s + \alpha_\gamma + \alpha_v \leq 3}\abs{\partial_s^{\alpha_s}\partial_\gamma^{\alpha_\gamma}\partial_v^{\alpha_v}\Phi(s,\gamma,v)} \leq C\e^{C(\abs{\gamma} + s)}.
        \end{equation*}
    \end{subassumptions}
\end{assumption}
For ease of presentation, for all $z=(s,\gamma,v)\in[0,\infty)\times\mathbb R\times[-1,1]$, set
\begin{equation}\label{defpsi}
\psi(z) = \partial_s\Phi(z) + \frac{1}{2}\partial_\gamma^2\Phi(z).
\end{equation}

Combining the ingredients described above yields the definition of the numerical approximation $u^\star_{m+1}\approx\ustar(t_{m+1},\cdot)$ of the solution
to the modified SPDE~\eqref{eq-SPDEstar} at the time-grid point $t_{m+1}$, and one obtains the domain preserving Lie--Trotter (DPLT) splitting scheme: for all $m\in\{0,1,\ldots,M-1\}$, set
\begin{numcases}{}
    u^\star_{m+1}(x)=\int_{\mathcal D}G_d(\dt,x,y)\Phi\big(\Ztil^\star_{m+1}(y)\big)\dd y,\quad x\in\overline{\mathcal D},\label{eq-dpltstar}\\
    \Ztil^\star_{m+1}(y)= \left(\difstar(t_m,y,u^\star_m(y))^2\dt,\drfstar(t_m,y,u^\star_m(y))\dt+\difstar(t_m,y,u^\star_m(y))\dbt_m,u^\star_m(y)\right).\label{eq-Ztilmstar}
\end{numcases}

Example~\ref{exa} provides an example of an integrator $\Phi$ which satisfies Assumption~\ref{ass-Phi}, and which is employed in the numerical experiments below. We refer to~\cite{manu} for a derivation of this integrator $\Phi$ and for other examples.
\begin{example}\label{exa}
An example of an integrator $\Phi$ in the DPLT splitting scheme~\eqref{eq-dpltstar} satisfying Assumption~\ref{ass-Phi} is given by
the composition
$$
\Phi(s,\gamma,v)=\varphi(\gamma,\phi(s,v)),
$$
where the mappings $\varphi,\phi$, for all $s\in\R$ and $v\in[-1,1]$, are given by
\begin{equation*}
\phi(s,v)=\frac{v}{\sqrt{v^2+(1-v^2)e^{-2s}}}\quad\text{and}\quad
\varphi(s,v)=\frac{(1-e^{2s})+(1+e^{2s})v}{(1+e^{2s})+(1-e^{2s})v}.
\end{equation*}
Note that $|\phi(s,v)|\le 1$ and $|\varphi(s,v)|\le 1$ for all $s\in\R$ and $v\in[-1,1]$.
\end{example}

\subsection{Main results on the DPLT splitting scheme}\label{subMain}
In this subsection, we first state that the proposed time integrator~\eqref{eq-dpltstar} is domain preserving, i.\,e. that $\ustar_m(x)\in[-1,1]$ for all $x\in\overline{\mathcal{D}}$ and all $m\in\{1,\ldots,M\}$, see Proposition~\ref{prop-dplt}. Second, we state that it converges with mean-square order $\frac12-$ to the solution of the modified SPDE~\eqref{eq-SPDEstar} (with coefficients $\drstar$ and $\distar$), see Theorem~\ref{thm-mainthm}. Finally, as a consequence of the previous results, we show that one can simplify the construction of the scheme by considering the original mappings $\drf$ and $\dif$ instead of their extensions $\drfstar$ and $\difstar$, and that the domain preserving Lie--Trotter scheme~\eqref{eq-dplt} converges with mean-square order $\frac12-$ to the solution of the original SPDE~\eqref{eq-SPDE} (with coefficients $\dr$ and $\di$).
The proofs of the convergence results are given in Sections~\ref{sec-mainproof} and~\ref{sec-prooflecoro}.

First, let us show that the DPLT splitting scheme~\eqref{eq-dpltstar} is domain preserving.
\begin{proposition}\label{prop-dplt}
    Let Assumptions~\ref{ass-ic},~\ref{ass-driftdiff}, and~\ref{ass-Phi} be satisfied. Then, for any time-step size $\dt=T/M$ with $T\in(0,\infty)$ and $M\in\N$, the numerical solution given by the DPLT splitting scheme~\eqref{eq-dpltstar} takes values in the domain $[-1,1]$ almost surely:
    \begin{equation}\label{eq-dpdplt}
        \PP\left(u^\star_m(x)\in[-1,1],~\forall m\in\{0,\ldots,M\},~\forall x\in\overline{\mathcal{D}}\right) = 1.
    \end{equation}
    Furthermore, for all $m\in\{0,\ldots,M\}$, $u^\star_m$ is almost surely continuous on $\overline{\mathcal{D}}$.
\end{proposition}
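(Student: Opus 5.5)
The proof goes by induction on $m\in\{0,\ldots,M\}$. We show that, almost surely, $u^\star_m$ is continuous on $\overline{\mathcal{D}}$, vanishes on $\partial\mathcal{D}$ for $m\ge1$, and takes values in $[-1,1]$. It suffices to work on the full-measure event where the finitely many increments $\dbt_0,\ldots,\dbt_{M-1}$ are finite real numbers; on this event every step is deterministic. The base case $m=0$ is Assumption~\ref{ass-ic}, since $u^\star_0=u_0\in\mathcal{C}_0^0(\overline{\mathcal{D}})$ with values in $[-1,1]$.

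For the induction step, assume $u^\star_m$ is continuous with values in $[-1,1]$. Then the third component of $\Ztil^\star_{m+1}(y)$ in~\eqref{eq-Ztilmstar} lies in $[-1,1]$, and the first component $\difstar(t_m,y,u^\star_m(y))^2\dt$ is nonnegative. Hence $\Ztil^\star_{m+1}(y)$ belongs to the domain $[0,\infty)\times\R\times[-1,1]$ of $\Phi$, and Assumption~\ref{ass-Phi-dp} gives $\Phi(\Ztil^\star_{m+1}(y))\in[-1,1]$ for every $y$.

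Next we apply Lemma~\ref{la-propHK-bdd}. Since $G_d(\dt,x,\cdot)\ge0$ and $\int_{\mathcal{D}}G_d(\dt,x,y)\dd y\le1$,
\[
\abs{u^\star_{m+1}(x)}\le\int_{\mathcal{D}}G_d(\dt,x,y)\abs{\Phi(\Ztil^\star_{m+1}(y))}\dd y\le1 .
\]

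For continuity, we need $y\mapsto\Ztil^\star_{m+1}(y)$ to be measurable and bounded, so that the integral is well defined. On $[-1,1]$ the extensions $\drfstar,\difstar$ coincide with $\drf,\dif$, which are continuous by Lemma~\ref{la-driftdiffdecomp}. Since $u^\star_m$ is continuous with values in $[-1,1]$, the map $y\mapsto\Ztil^\star_{m+1}(y)$ is continuous, and $\Phi\circ\Ztil^\star_{m+1}$ is therefore a bounded continuous function on $\overline{\mathcal{D}}$. We then use the series representation~\eqref{eq-HK}: for $\dt>0$ it converges absolutely and uniformly in $(x,y)$, because $\sum_{\uk}\exp(-\norm{\uk}^2\pi^2\dt)<\infty$ and $\abs{e_\uk}\le 2^{d/2}$. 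So $G_d(\dt,\cdot,\cdot)$ is continuous on $\overline{\mathcal{D}}^2$, and it vanishes for $x\in\partial\mathcal{D}$ because every $e_\uk$ does. Dominated convergence then shows that $u^\star_{m+1}$ is continuous and vanishes on $\partial\mathcal{D}$, which closes the induction.

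The only step needing some care is the continuity of $u^\star_{m+1}$. The concern is that $\drfstar,\difstar$ are discontinuous at $\abs{u}=1$ in general. This is harmless here because the induction hypothesis keeps $u^\star_m$ inside $[-1,1]$, where these maps agree with $\drf,\dif$. The domain preservation itself follows directly from Assumption~\ref{ass-Phi-dp} and the sub-Markov property of the heat kernel. Since the event of full measure does not depend on $m$ or $x$, the statement holds simultaneously for all $m$ and $x$, which is exactly~\eqref{eq-dpdplt}.
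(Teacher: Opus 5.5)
Your proof is correct and follows the paper's argument. The paper also inducts on $m$, uses Assumption~\ref{ass-Phi-dp} to keep $\Phi(\Ztil^\star_{m+1})$ in $[-1,1]$, and bounds $u^\star_{m+1}$ via the nonnegativity and sub-Markov property of the heat kernel (Lemma~\ref{la-propHK-bdd}). It likewise gets spatial continuity from the continuity of $\drfstar,\difstar$ on $[-1,1]$ together with the continuity of $G_d(\dt,\cdot,\cdot)$. Your additional details are valid refinements rather than a different route: the nonnegativity of the first component of $\Ztil^\star_{m+1}$, the uniform convergence of the series~\eqref{eq-HK}, and the vanishing of $u^\star_{m+1}$ on $\partial\mathcal{D}$.
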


\begin{proof}
	The property~\eqref{eq-dpdplt} is established by induction with respect to the index $m\in\{0,\ldots,M\}$. First, note that owing to Assumption~\ref{ass-ic}, $u^\star_0$ is continuous on $\overline{\mathcal{D}}$ and one has
	\[
	\underset{x\in\overline{\mathcal{D}}}\sup~|\ustar_0(x)|=\underset{x\in\overline{\mathcal{D}}}\sup~|u_0(x)|\le 1.
	\]
	Next, assume that the property is satisfied for the index $m$, i.\,e. that, almost surely, $u^\star_m$ is continuous on $\overline{\mathcal{D}}$ and one has
	\[
	\underset{x\in\overline{\mathcal{D}}}\sup~|\ustar_m(x)|\le 1.
	\]
	Since the mappings $\drfstar$ and $\difstar$ are continuous on $[0,\infty)\times\overline{\mathcal{D}}\times[-1,1]$ and the integrator $\Phi$ is continuous, the random functions  $y\mapsto \Ztil^\star_{m+1}(y)$ defined by~\eqref{eq-Ztilmstar} and $y\in\overline{\mathcal{D}}\mapsto \Phi\bigl(\Ztil^\star_{m+1}(y)\bigr)$ are continuous almost surely. In addition, applying
	 Assumption~\ref{ass-Phi-dp} on the integrator $\Phi$, almost surely one has
	\[
	\underset{y\in\overline{\mathcal{D}}}\sup~|\Phi\bigl(\Ztil^\star_{m+1}(y)\bigr)|\le 1.
	\]
	As a result, by continuity of the heat kernel, $u^\star_{m+1}$ is almost surely continuous on $\overline{\mathcal{D}}$.
    Finally, applying Lemma~\ref{la-propHK-bdd} to the definition~\eqref{eq-dpltstar} of the numerical solution $\ustar_{m+1}$, one obtains almost surely
	\[
	\underset{x\in\overline{\mathcal{D}}}\sup~|\ustar_{m+1}(x)|\le \underset{x\in\overline{\mathcal{D}}}\sup~\int_{\mathcal D}G_d(\dt,x,y)\dd y~\underset{y\in\overline{\mathcal{D}}}\sup~|\Phi\bigl(\Ztil^\star_{m+1}(y)\bigr)|\le 1.
	\]
	The property~\eqref{eq-dpdplt} is thus established for the index $m+1$. The application of an induction argument then concludes the proof of Proposition~\ref{prop-dplt}.
\end{proof}

Owing to Proposition~\ref{prop-dplt}, in the definition~\eqref{eq-Ztilmstar} of $\Ztil_{m+1}^\star(y)$, the extensions $\drfstar$ and $\difstar$ can be replaced by the original mappings $\drf$ and $\dif$: for all $m\in\{0,\ldots,M-1\}$ and $y\in\overline{\mathcal{D}}$, one has
\[
\Ztil_{m+1}^\star(y)=\left(\dif(t_m,y,u_m^\star(y))^2\dt,\drf(t_m,y,u_m^\star(y))\dt+\dif(t_m,y,u_m^\star(y))\dbt_m,u_m^\star(y)\right).
\]
As a result, the domain preserving Lie--Trotter scheme~\eqref{eq-dpltstar} can be redefined directly at the original level, i.\,e. for the original SPDE~\eqref{eq-SPDE}, with the coefficients $\drf$ and $\dif$. In other words, for all $m\in\{0,1,\ldots,M-1\}$ and $x\in\overline{\mathcal D}$, set
\begin{equation}\label{eq-dplt}
\begin{cases}
u_{m+1}(x)=\int_{\mathcal D}G_d(\dt,x,y)\Phi\big(\Ztil_{m+1}(y)\big)\dd y, \\
\Ztil_{m+1}(y)=\left(\dif(t_m,y,u_m(y))^2\dt,\drf(t_m,y,u_m(y))\dt+\dif(t_m,y,u_m(y))\dbt_m,u_m(y)\right).
\end{cases}
\end{equation}
The numerical solutions given by~\eqref{eq-dpltstar}-\eqref{eq-Ztilmstar} and \eqref{eq-dplt} coincide: almost surely, one has
$$
u_m=u^\star_m\quad\text{and}\quad \Ztil_{m}=\Ztil^\star_{m},\quad \forall~m\in\{0,1,\ldots,M\}.
$$
As a straightforward consequence of Proposition~\ref{prop-dplt}, the DPLT splitting scheme~\eqref{eq-dplt} is domain preserving.
\begin{corollary}\label{corodpdplt}
Let Assumptions~\ref{ass-ic},~\ref{ass-driftdiff}, and~\ref{ass-Phi} be satisfied. Then, for any time-step size $\dt=T/M$ with $T\in(0,\infty)$ and $M\in\N$, the numerical solution given by the DPLT splitting scheme~\eqref{eq-dplt} takes values in the domain $[-1,1]$ almost surely:
	\begin{equation}\label{eq-dpdpltnostar}
        \PP\left(u_m(x)\in[-1,1],~\forall m\in\{0,1,\ldots,M\},~\forall x\in\overline{\mathcal{D}}\right) = 1.
    \end{equation}
\end{corollary}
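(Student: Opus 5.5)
The plan is to reduce Corollary~\ref{corodpdplt} to Proposition~\ref{prop-dplt}. The key step is to show that the two recursions~\eqref{eq-dpltstar}--\eqref{eq-Ztilmstar} and~\eqref{eq-dplt}, both started from the same initial value $u_0$ and driven by the same Brownian increments $\dbt_m$, produce identical iterates almost surely. Once this is known, the almost sure inclusion $u_m(x)\in[-1,1]$ is inherited directly from~\eqref{eq-dpdplt}.

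I would prove the identification by induction on $m\in\{0,\ldots,M\}$. Fix a full-probability event $\Omega_0$ on which the conclusion~\eqref{eq-dpdplt} of Proposition~\ref{prop-dplt} holds; all subsequent statements are made on $\Omega_0$.

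For $m=0$, one has $u_0=\ustar_0$ by construction, and Assumption~\ref{ass-ic} gives $u_0(y)\in[-1,1]$ for all $y$. For the induction step, suppose $u_m=\ustar_m$ on $\Omega_0$. Then $u_m(y)=\ustar_m(y)\in[-1,1]$ for every $y\in\overline{\mathcal{D}}$. Consequently $\dif(t_m,y,u_m(y))$ and $\drf(t_m,y,u_m(y))$ are well defined, which is the only subtle point, since $\drf$ and $\dif$ are only defined for $u\in[-1,1]$. By~\eqref{eq-afstar}, on $\{\abs{u}\le 1\}$ these values coincide with $\difstar(t_m,y,\ustar_m(y))$ and $\drfstar(t_m,y,\ustar_m(y))$. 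Hence $\Ztil_{m+1}(y)=\Ztil^\star_{m+1}(y)$ for all $y$. Integrating $\Phi$ of this common quantity against $G_d(\dt,x,\cdot)$ then yields $u_{m+1}=\ustar_{m+1}$ on $\Omega_0$.

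Therefore, on $\Omega_0$ one has $u_m(x)=\ustar_m(x)\in[-1,1]$ for all $m$ and $x$, which gives~\eqref{eq-dpdpltnostar}. There is no real obstacle. The only care needed is to keep the induction on a single almost sure event, so that the well-definedness of~\eqref{eq-dplt} at step $m+1$ is justified by domain preservation at step $m$.
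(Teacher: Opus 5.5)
Your proposal is correct and follows the paper's own argument. The paper likewise notes that, by Proposition~\ref{prop-dplt}, $\drfstar$ and $\difstar$ may be replaced by $\drf$ and $\dif$ in~\eqref{eq-Ztilmstar}, so the schemes~\eqref{eq-dpltstar} and~\eqref{eq-dplt} coincide almost surely and domain preservation is inherited; your induction on a single full-probability event just makes explicit what the paper states in one line, including the well-definedness of~\eqref{eq-dplt}, which requires $u_m(y)\in[-1,1]$.
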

In practice, the implementation of the DPLT splitting scheme is performed using the formulation~\eqref{eq-dplt}.

We now state the main result of this article: the DPLT splitting scheme~\eqref{eq-dpltstar} converges with mean square order $\frac12-$ to the solution of~\eqref{eq-SPDEstar}. To simplify notation, we introduce the norm $\mse{\cdot} = \E[\abs{\cdot}^2]^{\frac12}$ on the space $L^2(\Omega)$ of square-integrable real-valued random variables.

\begin{theorem}\label{thm-mainthm}
    Let $T\in(0,\infty)$. Consider the mild solution $\bigl(\ustar(t,\cdot))_{t\in[0,T]}$ to the modified SPDE~\eqref{eq-SPDEstar} with coefficients $\drstar$ and $\distar$ defined by~\eqref{eq-starcoeff}, and the numerical solution $\bigl(\ustar_m\bigr)_{0\le m\le M}$ given by the DPLT splitting scheme~\eqref{eq-dpltstar} with time-step size $\dt=T/M$.
    Under Assumptions~\ref{ass-ic},~\ref{ass-driftdiff}, and~\ref{ass-Phi}, the mean-square error converges to $0$ at order $\frac12-$: for all $T\in(0,\infty)$ and $\varepsilon\in(0,\frac12)$, there exists $C_\varepsilon(T)\in(0,\infty)$ such that, for all $\dt=T/M$, one has
    \begin{equation}\label{convDiscont}
        \sup_{0\leq m\leq M}\sup_{x\in\overline{\mathcal{D}}} \mse{u^\star_m(x)-\ustar(t_m,x)}
        \leq C_\varepsilon(T)\dt^{\frac12-\varepsilon}.
    \end{equation}
\end{theorem}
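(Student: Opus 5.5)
The plan is to introduce a continuous-time interpolation of the scheme~\eqref{eq-dpltstar} and to compare it with the mild formulation of~\eqref{eq-SPDEstar} through a discrete Gr\"onwall argument. The interpolation uses It\^o's formula for the integrator $\Phi$. For $t\in[t_m,t_{m+1}]$ and $y\in\overline{\mathcal{D}}$, set
\[
Z^\star(t,y)=\Bigl(\difstar(t_m,y,u^\star_m(y))^2(t-t_m),\;\drfstar(t_m,y,u^\star_m(y))(t-t_m)+\difstar(t_m,y,u^\star_m(y))(\beta(t)-\beta(t_m)),\;u^\star_m(y)\Bigr),
\]
so that $Z^\star(t_{m+1},y)=\Ztil^\star_{m+1}(y)$. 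Apply It\^o's formula to $\Phi(Z^\star(t,y))$ on $[t_m,t_{m+1}]$, using~\eqref{defpsi}:
\[
\Phi(\Ztil^\star_{m+1}(y))=u^\star_m(y)+\int_{t_m}^{t_{m+1}}\bigl(\difstar^2\psi+\drfstar\partial_\gamma\Phi\bigr)(Z^\star(s,y))\dd s+\int_{t_m}^{t_{m+1}}\difstar\,\partial_\gamma\Phi(Z^\star(s,y))\dd\beta(s),
\]
where the coefficients $\difstar,\drfstar$ are frozen at $(t_m,y,u^\star_m(y))$. Substituting this into~\eqref{eq-dpltstar}, iterating, and using the semigroup property of Lemma~\ref{la-propHK-semi}, I obtain a discrete mild formula. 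It has the same form as the mild formulation of~\eqref{eq-SPDEstar}, but the kernel is $G_d(t_n-t_{\ell(s)},x,y)$ with $t_{\ell(s)}$ the grid point below $s$. The drift integrand is $\difstar^2\psi(Z^\star)+\drfstar\partial_\gamma\Phi(Z^\star)$ and the noise integrand is $\difstar\partial_\gamma\Phi(Z^\star)$.

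The key consistency step is to compare these integrands with $\drstar(s,y,\ustar(s,y))$ and $\distar(s,y,\ustar(s,y))$. At $Z=(0,0,v)$, Assumption~\ref{ass-Phi-dg} gives $\partial_\gamma\Phi=\sigma(v)$ and Assumption~\ref{ass-Phi-Psi} gives $\psi=0$. Hence $\drfstar\sigma(v)=\drstar$ and $\difstar\sigma(v)=\distar$ at $(t_m,y,v)$, by~\eqref{eq-afstar} and Lemma~\ref{la-driftdiffdecomp}. The error $\partial_\gamma\Phi(Z^\star(s))-\sigma(u^\star_m)$ is then bounded by a Taylor expansion, using the growth bound of Assumption~\ref{ass-Phi-bdd} together with the bound~\eqref{eq-boundafstar}. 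This gives $\mse{\cdot}\le C\dt^{1/2}$, because exponential moments of $|\beta(s)-\beta(t_m)|$ are uniformly bounded. The same argument bounds $\psi(Z^\star(s))$ by $C\dt^{1/2}$ in $L^2(\Omega)$. Domain preservation (Proposition~\ref{prop-dplt}) is what keeps the third argument in $[-1,1]$, so that Assumption~\ref{ass-Phi} applies.

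The remaining terms are handled by standard bounds. The time regularity in Remark~\ref{rmk-driftdiffstar-regt} gives $C\dt^{1/2}$. The Lipschitz bound~\eqref{eq-Lipstar} turns the difference $\drstar(\cdot,u^\star_m)-\drstar(\cdot,\ustar(s))$ into $|u^\star_m-\ustar(t_m)|+|\ustar(t_m)-\ustar(s)|$. Time H\"older regularity of $\ustar$ in $L^2(\Omega)$, uniformly in $x$, gives $|\ustar(t_m)-\ustar(s)|\lesssim|s-t_m|^{1/2-\varepsilon}$. I would prove this from the mild formula using Lemma~\ref{la-propHK-reg} and the continuity of $u_0$; near $s=0$ the bound is weaker, but it is integrable.

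The main obstacle is the kernel mismatch $G_d(t_n-s)$ versus $G_d(t_n-t_{\ell(s)})$, and the similar term for the initial condition. For the drift, Lemma~\ref{la-propHK-reg} with $\alpha=1-\varepsilon$ yields $\int_0^{t_n}(\dt/(t_n-s))^{1-\varepsilon}\dd s\lesssim\dt^{1-\varepsilon}$, where the integrands are bounded by boundedness of $\Phi$-derivatives in $L^p$. For the stochastic term, the It\^o isometry applies because $\beta$ is scalar: the noise integral equals $\int_0^{t_n}\bigl(\int_{\mathcal D}(\ldots)\dd y\bigr)\dd\beta(s)$. Minkowski's inequality then gives a bound $\int_0^{t_n}\bigl(\int_{\mathcal D}|G_d(t_n-s)-G_d(t_n-t_\ell)|\dd y\bigr)^2\dd s$. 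Taking $\alpha=\tfrac12-\varepsilon$, this is at most $\int_0^{t_n}(\dt/(t_n-s))^{1-2\varepsilon}\dd s\lesssim\dt^{1-2\varepsilon}$, which is integrable, and this is where the loss of $\varepsilon$ occurs. Collecting everything, $e_n=\sup_x\mse{u^\star_n(x)-\ustar(t_n,x)}^2$ satisfies
\[
e_n\le C\dt^{1-2\varepsilon}+C\dt\sum_{m<n}e_m.
\]
The discrete Gr\"onwall lemma then gives~\eqref{convDiscont}.
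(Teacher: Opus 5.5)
Your argument is correct, but it is organised differently from the paper's. The paper does not argue at grid points. It defines the continuous-time interpolant $\util$ by \eqref{eq-auxproc} and proves, in Proposition~\ref{prop-utilexpr}, that it agrees with the scheme at the $t_m$ and satisfies \eqref{utilexpr2}. Proposition~\ref{prop-utilprops} then shows that $\util$ is continuous, $[-1,1]$-valued and $L^2$-H\"older in time with weight $t_m^{-(1-2\varepsilon)}$. Finally the paper splits $\ustar(t,x)-\util(t,x)$, for every $t\in[0,T]$, into $e_{1,1},\dots,e_{1,5},e_2$ and closes with a continuous Gr\"onwall inequality.

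Several of your terms match the paper's. Your consistency terms are its $e_{1,5}$ and $e_2$, your kernel-mismatch term is its $e_{1,4}$, and your Lipschitz and coefficient-regularity terms are its $e_{1,1}$ and $e_{1,2}$.

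The genuine difference is the freezing error. The paper's $e_{1,3}$ compares $\util(s,\cdot)$ with $\util(\ell(s),\cdot)$, using the time regularity \eqref{eq-temputil} of the numerical interpolant. You instead compare $\ustar(s,\cdot)$ with $\ustar(\ell(s),\cdot)$, using the time regularity of the exact solution, and then compare only at grid points via a discrete Gr\"onwall lemma.

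Your route is the more classical one. It spares you \eqref{utilexpr2} and the whole continuity and regularity analysis of $\util$. The needed regularity of $\ustar$ follows from its mild formula exactly as in the proof of \eqref{eq-temputil}, since $\drstar$ and $\distar$ are bounded. The paper's route yields the stronger all-time bound \eqref{convTimeCont}, which it then uses in the proof of Proposition~\ref{prop-exuniqdp}. For that purpose, your grid-point estimate along dyadic step sizes, combined with continuity of $\ustar$, would also suffice.

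Two small remarks. First, at grid points there is no initial-condition mismatch: both mild formulas contain exactly $\int_{\mathcal D}G_d(t_n,x,y)u_0(y)\dd y$. Second, on the first subinterval you must bound $\mse{\ustar(s,y)-u_0(y)}$ using boundedness of $\drstar,\distar$ (or standard moment bounds). You cannot use $\ustar\in[-1,1]$ there, because that property is only deduced afterwards from this theorem.
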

The proof of Theorem~\ref{thm-mainthm} is given in Section~\ref{sec-mainproof}.
Note that an auxiliary continuous-time process $\bigl(\util(t,x)\bigr)_{t\in[0,T],x\in\overline{\mathcal{D}}}$  defined by equation~\eqref{eq-auxproc} below, which coincides with the solution of the DPLT splitting scheme~\eqref{eq-dpltstar} at the time-grid points $t_m$ for $0\le m\le M$, is employed in the proof. In fact, the proof provided in Section~\ref{sec-mainproof} provides a stronger convergence result, which is valid at all time $t\in[0,T]$, see equation~\eqref{eq-timecontresult}: with the same notation as in Theorem~\ref{thm-mainthm}, one has the mean-square error estimates
\begin{equation}\label{convTimeCont}
\sup_{t\in[0,T]}~\sup_{x\in\overline{\mathcal{D}}}
~\mse{\util(t,x)-\ustar(t,x)}
\leq C_\varepsilon(T)\dt^{\frac12-\varepsilon}.
\end{equation}

Finally, verifying that $\util(t,x)\in[-1,1]$ almost surely for all $t\in[0,T]$ and $x\in\overline{\mathcal{D}}$, the convergence result~\eqref{convTimeCont} shows that  $\ustar(t,x)\in[-1,1]$ almost surely for all $t\in[0,T]$ and $x\in\overline{\mathcal{D}}$. As a consequence, as explained in Section~\ref{sec-sol}, one obtains the existence of a mild solution $\bigl(u(t,x)\bigr)_{t\in[0,T],x\in\overline{\mathcal{D}}}$ to the SPDE~\eqref{eq-SPDE} in the sense of Definition~\eqref{defi-mild} (see Proposition~\ref{prop-exuniqdp}). In addition, Corollary~\ref{lecorollaire} states the convergence in the mean-square sense with order $\frac12-$ of the DPLT splitting scheme~\eqref{eq-dplt} to the solution of the SPDE~\eqref{eq-SPDE}.

\begin{corollary}\label{lecorollaire}
Let $T\in(0,\infty)$. Consider the mild solution $\bigl(u(t,\cdot))_{t\in[0,T]}$ to the SPDE~\eqref{eq-SPDE}, and the numerical solution $\bigl(u_m\bigr)_{0\le m\le M}$ given by the DPLT splitting scheme~\eqref{eq-dplt} with time-step size $\dt=T/M$.
    Under Assumptions~\ref{ass-ic},~\ref{ass-driftdiff}, and~\ref{ass-Phi}, the mean-square error converges to $0$ at order $\frac12-$: for all $T\in(0,\infty)$ and $\varepsilon\in(0,\frac12)$, there exists $C_\varepsilon(T)\in(0,\infty)$ such that, for all $\dt=T/M$, one has
    \begin{equation*}
        \sup_{0\leq m\leq M}\sup_{x\in\overline{\mathcal{D}}} \mse{u_m(x)-u(t_m,x)}
        \leq C_\varepsilon(T)\dt^{\frac12-\varepsilon}.
    \end{equation*}
\end{corollary}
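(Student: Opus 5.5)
The plan is to reduce Corollary~\ref{lecorollaire} to Theorem~\ref{thm-mainthm} by identifying both the numerical solutions and the exact solutions of the original and modified problems.

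\emph{Numerical side.} By Proposition~\ref{prop-dplt}, almost surely $u^\star_m(y)\in[-1,1]$ for all $m$ and $y$. On $[-1,1]$ the extended coefficients coincide with the original ones, by~\eqref{eq-afstar} and Remark~\ref{rmk-driftdiffstar-agree}. Hence $\drfstar(t_m,y,u^\star_m(y))=\drf(t_m,y,u^\star_m(y))$, and likewise for $\difstar$ and $\dif$. An induction on $m$, starting from $u_0=u^\star_0$, then shows that the recursions~\eqref{eq-dpltstar}--\eqref{eq-Ztilmstar} and~\eqref{eq-dplt} generate the same sequence. So almost surely $u_m=u^\star_m$ for all $m\in\{0,\dots,M\}$, as already noted after~\eqref{eq-dplt}.

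\emph{Exact side.} The main point is to show that $u=\ustar$, where $\ustar$ is the mild solution of~\eqref{eq-SPDEstar} given by Proposition~\ref{prop-exuniqmod}.
\begin{enumerate}[label=(\roman*)]
\item Introduce the continuous-time interpolant $\util$ of the scheme. It is built by running the SDE-type integrator $\Phi$ on each subinterval with partial increments and convolving with $G_d(t-t_m,\cdot,\cdot)$. Because $\Phi$ maps into $[-1,1]$ (Assumption~\ref{ass-Phi-dp}) and $G_d$ is nonnegative with mass at most one (Lemma~\ref{la-propHK-bdd}), this interpolant satisfies $\util(t,x)\in[-1,1]$ almost surely for every $t$ and $x$.
\item By~\eqref{convTimeCont}, for fixed $(t,x)$ we have $\util(t,x)\to\ustar(t,x)$ in $L^2(\Omega)$ as $\dt\to0$. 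Passing to an almost surely convergent subsequence gives $\ustar(t,x)\in[-1,1]$ almost surely for each fixed $(t,x)$.
\item Upgrade this to a statement holding simultaneously for all $(t,x)$. Take a countable dense set of points $(t,x)\in[0,T]\times\overline{\mathcal{D}}$ and use the almost sure continuity of $\ustar$; this gives~\eqref{eq-mildsolDP} for $\ustar$.
\item With~\eqref{eq-mildsolDP} in hand, Remark~\ref{rmk-driftdiffstar-agree} lets us replace $\drstar(s,y,\ustar(s,y))$ and $\distar(s,y,\ustar(s,y))$ by $\dr$ and $\di$ in the mild formulation. So $\ustar$ is a mild solution of~\eqref{eq-SPDE} in the sense of Definition~\ref{defi-mild}.
\item Conversely, any mild solution $u$ of~\eqref{eq-SPDE} takes values in $[-1,1]$, so it also solves~\eqref{eq-SPDEstar}. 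The uniqueness part of Proposition~\ref{prop-exuniqmod} then gives $u=\ustar$, which is also the content of Proposition~\ref{prop-exuniqdp}.
\end{enumerate}

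\emph{Conclusion.} Combining the two identifications, for every $m$ and $x$,
\[
\mse{u_m(x)-u(t_m,x)}=\mse{u^\star_m(x)-\ustar(t_m,x)}.
\]
Taking suprema, the bound~\eqref{convDiscont} of Theorem~\ref{thm-mainthm} gives the claim with the same constant $C_\varepsilon(T)$.

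\emph{Main obstacle.} All the genuine difficulty sits in Theorem~\ref{thm-mainthm} and in the time-continuous estimate~\eqref{convTimeCont}, which we assume. Within this argument, the only delicate point is step~(iii): passing from ``$\ustar(t,x)\in[-1,1]$ almost surely for each fixed $(t,x)$'' to ``almost surely for all $(t,x)$''. This is handled by restricting to a countable dense set and invoking the continuity of $\ustar$.
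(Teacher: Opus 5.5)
Your proposal is correct and follows essentially the same route as the paper. The paper identifies $u_m=u^\star_m$ via domain preservation, then identifies $u=\ustar$ in the proof of Proposition~\ref{prop-exuniqdp} using \eqref{convTimeCont}, the domain preservation of $\util$, continuity of $\ustar$ and uniqueness for the modified SPDE, and finally applies Theorem~\ref{thm-mainthm}. The only difference is cosmetic: you extract an almost surely convergent subsequence, whereas the paper takes dyadic step sizes $\dt_n=T/2^n$ and uses the Markov inequality together with the Borel--Cantelli lemma.
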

The proof of this result is given in Section~\ref{sec-prooflecoro}.

\subsection{Numerical experiments}\label{subNumExp}

In this subsection, we provide numerical illustrations of the qualitative behavior of
the DPLT splitting scheme~\eqref{eq-dplt}\footnote{The codes are available under \url{https://doi.org/10.5281/zenodo.18785719}.} (denoted
by {\upshape DPLT} below) when applied to the
SPDE~\eqref{eq-SPDE} and of its mean-square convergence. In addition, we compare the proposed time integrator with the following classical numerical schemes for SPDEs 
(which are combined with a spatial discretisation, see below):
\begin{itemize}
\item the Euler--Maruyama scheme (under a CFL condition)
$$
u^{\text{EM}}_{m+1}(x)=u^{\text{EM}}_{m}(x)+\dt\Delta u^{\text{EM}}_{m}(x)+\dt\dr(t_m,x,u^{\text{EM}}_{m}(x))+\di(t_m,x,u^{\text{EM}}_{m}(x))\dbt_m,
$$
which will be denoted {\upshape EM} below, see for instance \cite{MR1803132},
\item the semi-implicit Euler--Maruyama scheme,
$$
u^{\text{sEM}}_{m+1}(x)=u^{\text{sEM}}_{m}(x)+\dt\Delta u^{\text{sEM}}_{m+1}(x)+\dt\dr(t_m,x,u^{\text{sEM}}_{m}(x))+\di(t_m,x,u^{\text{sEM}}_{m}(x))\dbt_m,
$$
which will be denoted {\upshape sEM} below, see for instance \cite{MR1699161},
\item the stochastic exponential Euler scheme,
$$
u^{\text{SEXP}}_{m+1}(x)=\e^{\dt\Delta}\left(u^{\text{SEXP}}_{m}(x)+
\dt\dr(t_m,x,u^{\text{SEXP}}_{m}(x))+\di(t_m,x,u^{\text{SEXP}}_{m}(x))\dbt_m\right),
$$
which will be denoted {\upshape SEXP} below, see for instance \cite{MR3308418}.
\end{itemize}

To perform the numerical experiments, we discretise the SPDE~\eqref{eq-SPDE} in space using standard finite differences with mesh size $h=1/N$
for some $N\in\N$ on the spatial grid defined by $x_n=nh$ for $n=0,1,\ldots,N$, see for instance~\cite{MR1683281,MR1644183}. This spatial discretisation is known to be domain preserving, see e.\,g.~\cite[Lemma 5]{ulander2025boundary}.

Let us first illustrate the domain preserving property of {\upshape DPLT} as shown in Proposition~\ref{prop-dplt} in spatial dimension one.
For a fixed spatial discretisation of the SPDE~\eqref{eq-SPDE} with mesh size $h=2^{-8}$,
we apply the four time integrators defined above, with time-step size $\dt=2^{-3}$, on the time interval $[0,20]$.
We consider the initial value $u_0(x)=\sin(4\pi x)$ for $x\in[0,1]$ and several choices for the drift $\dr=\drf\sigma$ and
diffusion $\di=\dif\sigma$ coefficients of the SPDE. We then compute $100$ independent realisations of each numerical approximation and compute the proportion of these realisations which remain in the domain $[-1,1]$. Table~\ref{tabDP} illustrates the fact that all realisations of the DPLT splitting scheme
take values in the domain $[-1,1]$, which validates Proposition~\ref{prop-dplt}. On the contrary, observe that for the classical time integrators some realisations exit the domain $[-1,1]$.

\begin{table}[h]
\begin{center}
\begin{tabular}{|c | c | c | c| c| c|}
  \hline
  $\drf(t,x,u)$ & $\dif(t,x,u)$ & {\upshape DPLT} & {\upshape EM} & {\upshape sEM} & {\upshape SEXP} \\
  \specialrule{.15em}{.05em}{.05em}
  $u+\cos(x)$ & $u+2\sin(t)$ & $100/100$  & $0/100$ & $60/100$ & $99/100$\\
  \hline
  $u$ & $u$ & $100/100$ & $0/100$ & $100/100$ & $100/100$\\
  \hline
  $20u$ & $20u$ & $100/100$ & $0/100$ & $0/100$ & $80/100$\\
  \hline
  $\exp(-2u)$ & $\exp(-2u)$ & $100/100$ & $0/100$ & $16/100$ & $88/100$\\
  \hline
  $\sin(u\pi/2)$ & $2\cos(u\pi/2)$ & $100/100$ & $0/100$ & $51/100$ & $99/100$\\
  \hline
  $-u$ & $2(1-u^2)$ & $100/100$ & $0/100$ & $52/100$ & $99/100$\\
  \hline
\end{tabular}
\end{center}
\caption{Proportion of realisations in the domain $[-1,1]$ for $100$ simulated sample paths for different time integrators:
DPLT splitting scheme~\eqref{eq-dplt} ({\upshape DPLT}), Euler--Maruyama scheme ({\upshape EM}), semi-implicit Euler--Maruyama scheme ({\upshape sEM}), and stochastic exponential Euler scheme ({\upshape SEXP}).}
\label{tabDP}
\end{table}

Next, we illustrate the result of Theorem~\ref{thm-mainthm} on the mean-square rate of convergence of the DPLT splitting scheme. In particular, we discretise the SPDE~\eqref{eq-SPDE} in spatial dimension one, with initial condition $u_0(x) = \sin(2\pi x)$ for $x\in[0,1]$ using finite differences with mesh size $h = 2^{-8}$. We further discretise in time using the time integrators
{\upshape sEM}, {\upshape SEXP}, and {\upshape DPLT}. Note that the standard Euler--Maruyama scheme is not a suitable time integrator in the considered setting and is thus omitted. 
For the {\upshape sEM} and {\upshape SEXP} time integrators, the coefficients $\drf$ and $\dif$ are replaced by their extensions $\drfstar$ and $\difstar$ defined by~\eqref{eq-starcoeff} since, as shown in Table~\ref{tabDP}, the numerical solution may exit the domain $[-1,1]$. This guarantees the mean-square convergence of these integrators, since $\drstar=\drfstar\sigma$ and $\distar=\difstar\sigma$ satisfy the global Lipschitz continuity condition~\eqref{eq-Lipstar}. For the coefficients $\drf$ and $\dif$, we consider two examples: first $\drf(t,x,u) = 0$ and $\dif(t,x,u) = 2e^u$, and second $\drf(t,x,u) = u$ and $\dif(t,x,u) = 5u$.
The maximum mean-square error,
\begin{equation*}
    \sup_{0\leq m\leq M}\sup_{0\leq n\leq N}\mse{u_m(x_n) - u^{\mathrm{ref}}(t_m,x_n)},
\end{equation*}
is displayed in Figure~\ref{fig-strongconv1d} for different values of the time-step size $\dt = 2^{-4},\ldots,2^{-15}$ on the time interval $[0,1]$. The numerical approximation $u_m(x_n)$ refers to the {\upshape sEM}, {\upshape SEXP}, or
{\upshape DPLT} time integrator. To compute the reference solution $u^{\mathrm{ref}}$, we use the DPLT splitting scheme with time-step size $\dt^{\mathrm{ref}} = 2^{-16}$. In this experiment, $200$ samples were used to approximate the expectations. This number is sufficiently large to ensure that the Monte Carlo error is negligible. 
The observed order of convergence is indeed $\frac12$, which confirms the result from Theorem~\ref{thm-mainthm}.

\begin{figure}[h]
   \centering
   \begin{subfigure}{.45\textwidth}
     \centering
     \includegraphics[width=\textwidth]{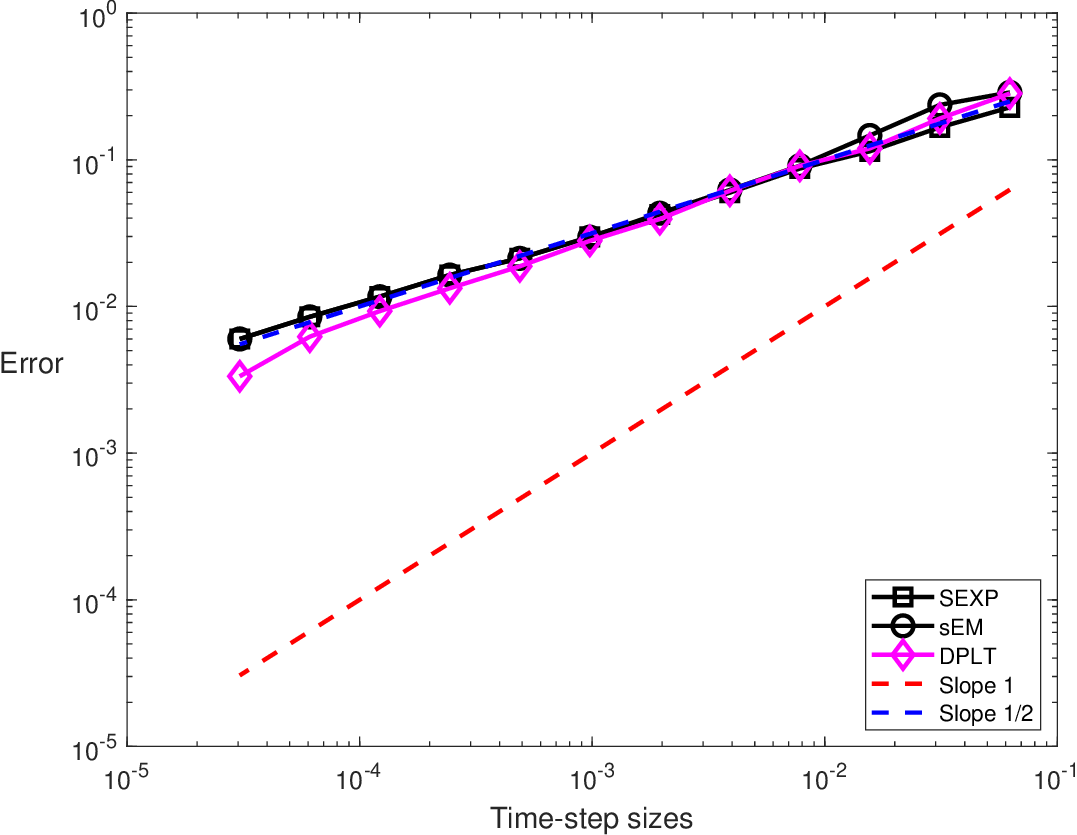}
     \caption{$\drf(t,x,u) = 0$, $\dif(t,x,u) = 2e^u$}
   \end{subfigure}
   \begin{subfigure}{.45\textwidth}
     \centering
     \includegraphics[width=\textwidth]{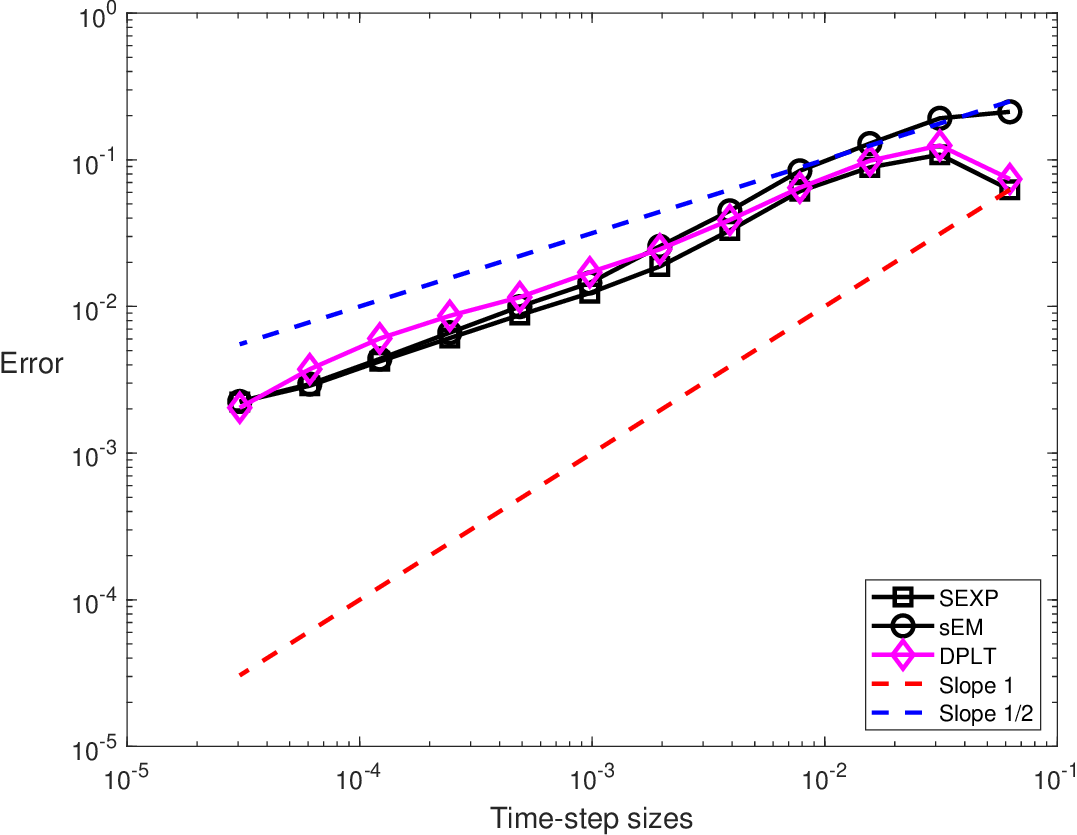}
     \caption{$\drf(t,x,u) = u$, $\dif(t,x,u) = 5u$}
   \end{subfigure}%
   \caption{Maximum mean-square errors for the {\upshape sEM}, {\upshape SEXP}, and {\upshape DPLT} time integrators on the time interval $[0,1]$, for $d=1$, with space-mesh size $h=2^{-8}$.}
   \label{fig-strongconv1d}
\end{figure}

In Figure~\ref{fig-strongconv2d}, the experiment is repeated in spatial dimension $d=2$, on the domain $\mathcal D=(0,1)^2$. The initial condition is given by $u_0(x_1,x_2) = \sin(2\pi x_1)\sin(2\pi x_2)$ for $(x_1,x_2)\in[0,1]^2$ and finite differences with mesh size $h=2^{-6}$ are used for the spatial discretisation. We choose the coefficients given by $\drf(t,x_1,x_2,u) = u^2$ and $\dif(t,x_1,x_2,u) = 2\left(u^3+t+x_1+x_2\right)$. We use $200$ samples to approximate the expectation and consider again the time-step sizes $\dt = 2^{-4},\ldots,2^{-15}$ on the time interval $[0,1]$. For the reference solution $u^{\mathrm{ref}}$ we consider the DPLT splitting scheme with time-step size $\dt^{\mathrm{ref}} = 2^{-16}$.  the order of convergence is confirmed to be $\frac12$, as stated in Theorem~\ref{thm-mainthm}.

\begin{figure}[h]
    \centering
    \includegraphics[width=0.5\textwidth]{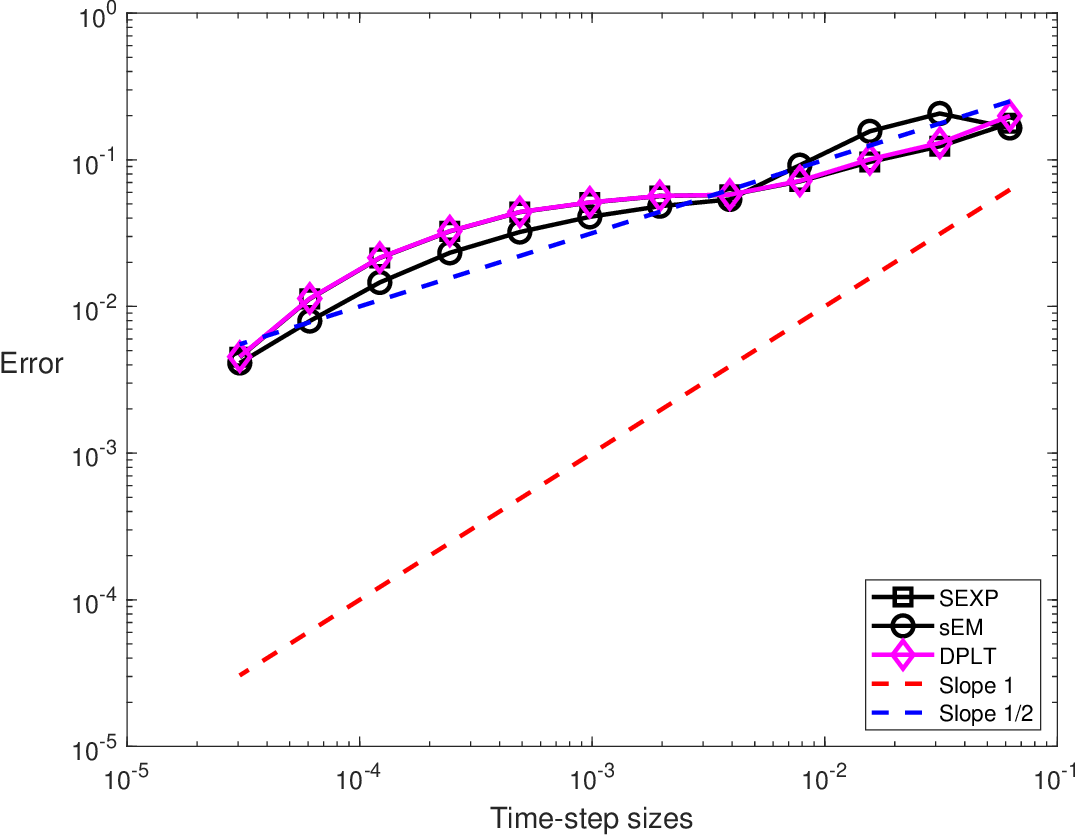}
    \caption{Maximum mean-square errors for the {\upshape sEM}, {\upshape SEXP}, and {\upshape DPLT} time integrators on the time interval $[0,1]$ for $d=2$, with space-mesh size $h=2^{-6}$.}
   \label{fig-strongconv2d}
\end{figure}

In the final set of numerical experiments, we illustrate that the observed order of convergence of the maximum mean-square error from Theorem~\ref{thm-mainthm} is independent of the space-mesh size $h$ used in the finite difference discretisation of space in dimension $d=1$ and $d=2$.

In Figure~\ref{fig-unif1d} we consider the same setting as in Figure~\ref{fig-strongconv1d}, in dimension $d=1$, with space-mesh sizes $h=2^{-4},2^{-6},2^{-8},2^{-10}$. We observe that the maximum mean-square errors are independent of the space-mesh size.

\begin{figure}[h]
    \centering
    \begin{subfigure}{.45\textwidth}
      \centering
      \includegraphics[width=\textwidth]{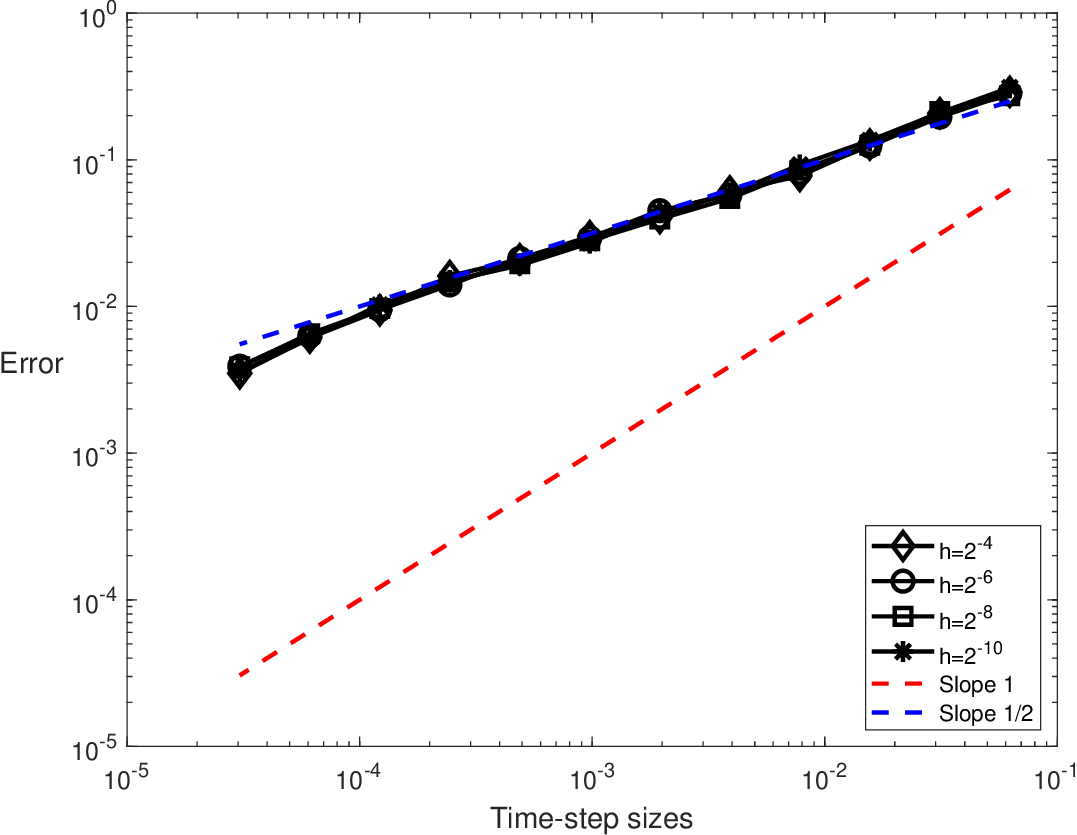}
      \caption{$\drf(t,x,u) = 0$, $\dif(t,x,u) = 2e^u$}
    \end{subfigure}
    \begin{subfigure}{.45\textwidth}
      \centering
      \includegraphics[width=\textwidth]{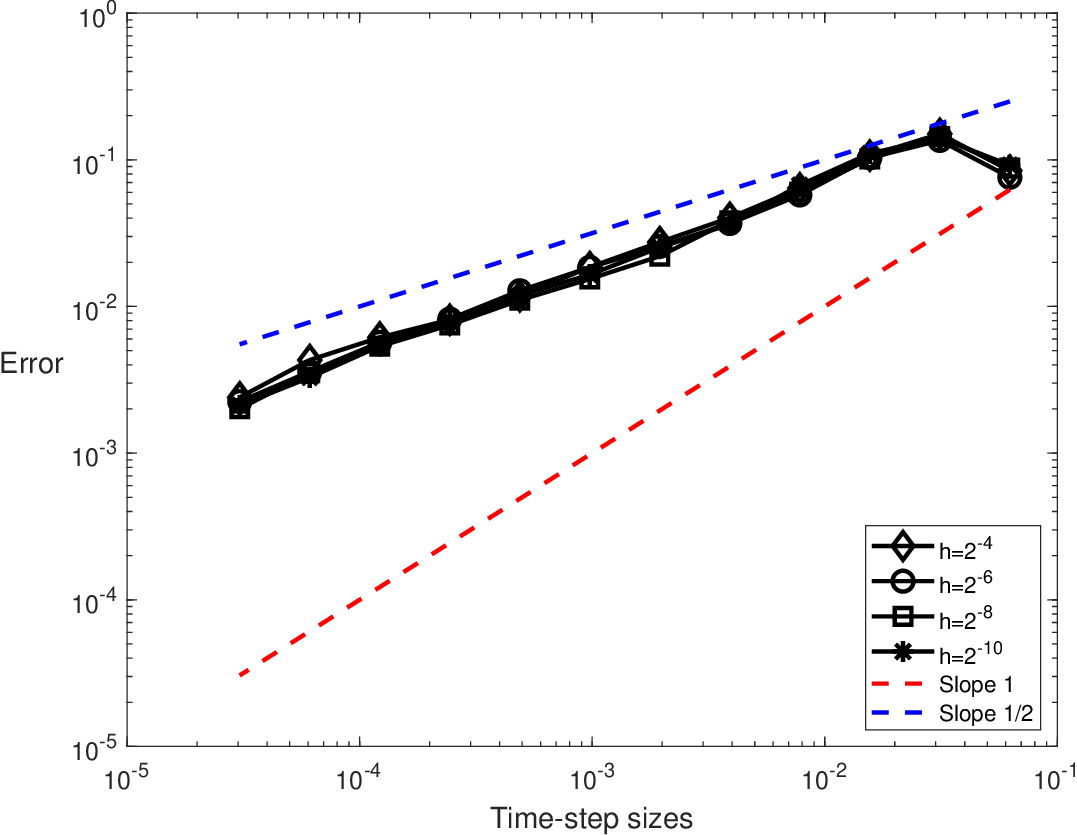}
      \caption{$\drf(t,x,u) = u$, $\dif(t,x,u) = 5u$}
    \end{subfigure}%
    \caption{Maximum mean-square errors for the {\upshape DPLT} time integrator on the time interval $[0,1]$,
    for $d=1$, with several different space-mesh sizes.}
   \label{fig-unif1d}
\end{figure}

Finally, in Figure~\ref{fig-unif2d}, we consider the same setting as in Figure~\ref{fig-strongconv2d}, in dimension $d=2$, for several different values of the space-mesh size $h=2^{-4},2^{-6},2^{-8}$. Again, we observe that the maximum mean-square error is independent of the space-mesh size $h$.

\begin{figure}[h]
    \centering
    \includegraphics[width=0.5\textwidth]{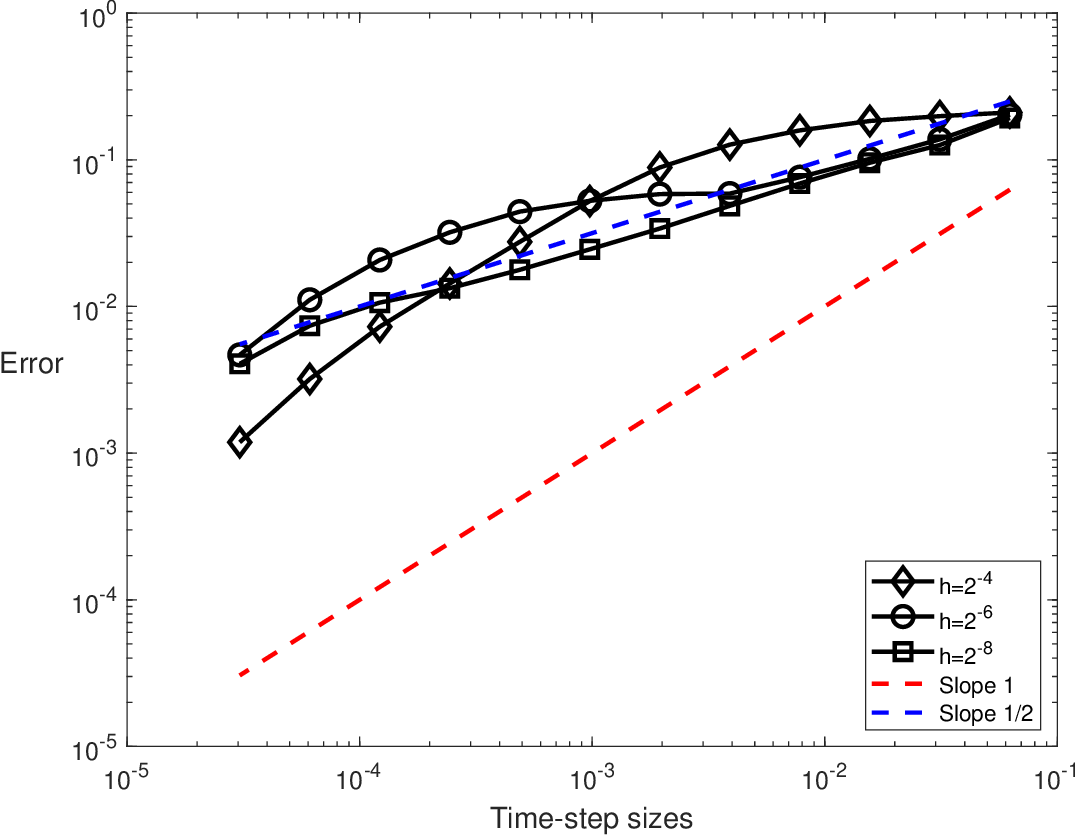}
    \caption{Maximum mean-square errors for the {\upshape DPLT} time integrator on the time interval $[0,1]$,
    for $d=2$, with several different space-mesh sizes.}
   \label{fig-unif2d}
\end{figure}

\section{Proof of the main results}\label{sec-proofs}
In this section we prove the existence and uniqueness of solutions to the SPDE~\eqref{eq-SPDE} as well as the convergence results of this article. In Section~\ref{sec-auxpr}, we introduce an auxiliary process which will be a useful tool in the proof of Theorem~\ref{thm-mainthm}. The proof of this theorem is then given in Section~\ref{sec-mainproof}. In Section~\ref{sec-proofexuniqdp}, we prove existence and uniqueness of a mild solution $u$ to the original SPDE~\eqref{eq-SPDE} (see Proposition~\ref{prop-exuniqdp}).
Finally, in Section~\ref{sec-prooflecoro}, we prove convergence of the DPLT splitting scheme~\eqref{eq-dplt} to this mild solution (see Corollary~\ref{lecorollaire}).

\subsection{Auxiliary processes}\label{sec-auxpr}

Let the value $T\in(0,\infty)$ of the final time be fixed.
As mentioned below the statement of Theorem~\ref{thm-mainthm}, we prove the convergence result~\eqref{convTimeCont} instead of~\eqref{convDiscont}, for an auxiliary continuous-time process $\left(\util(t,x)\right)_{t\in[0,T],x\in\overline{\mathcal{D}}}$ which coincides with the solution $u_m^\star$ given by the DPLT splitting scheme~\eqref{eq-dpltstar} at the time-grid points $t_m$: for all $m\in\{0,\ldots,M\}$ and all $x\in\overline{\mathcal{D}}$ one has $\util(t,x)=u_m^\star(x)$.

The definition of the auxiliary process requires us to introduce additional auxiliary processes. To simplify notation, their dependence with respect to the time-step size $\dt$ is not indicated.

First, let us introduce the process $\left(\Ztil(t,x)\right)_{t\in[0,T],x\in\overline{\mathcal{D}}}$ that is connected to~\eqref{eq-Ztilmstar}. To simplify notation, for all $m\in\{0,\ldots,M\}$ and $x\in\overline{\mathcal{D}}$, set
\begin{equation}\label{Util}
\Util_m(x)=(t_m,x,u^\star_m(x)).
\end{equation}
Then for all $m\in\{0,\ldots,M\}$, $t\in[t_m,t_{m+1})$ and $x\in\overline{\mathcal{D}}$, set
\begin{equation}\label{defZtilda}
\Ztil(t,x)=\left(\difstar(\Util_m(x))^2(t-t_m),\drfstar(\Util_m(x))(t-t_m)+\difstar(\Util_m(x))\bigl(\beta(t)-\beta(t_m)\bigr),u^\star_m(x)\right).
\end{equation}
In addition, set $\Ztil(T,x) = \left(0, 0, u^\star_M(x)\right)$ for all $x\in\overline{\mathcal{D}}$.

Note that by the definition~\eqref{defZtilda} of the process $\Ztil$, for all $m\in\{0,\ldots,M\}$ and all $x\in\overline{\mathcal{D}}$, one has $\Ztil(t_m,x)=\bigl(0,0,u_m^\star(x)\bigr)$. Moreover, recalling the definition~\eqref{eq-Ztilmstar} of $\Ztil^\star_{m+1}$, for all $m\in\{0,\ldots,M\}$ and all $x\in\overline{\mathcal{D}}$ one has
\[
\Ztil^\star_{m+1}(x)=\underset{t\to t_{m+1}^-}\lim~\Ztil(t,x).
\]
A temporal regularity property of the auxiliary process $\Ztil$ is given in Proposition~\ref{prop-Ztil} below.

Next, for any $m\in\{0,\ldots,M-1\}$, we define a process $\left(\vtil_m(t,x)\right)_{t\in[t_m,t_{m+1}],x\in\overline{\mathcal{D}}}$ defined on the time interval $[t_m,t_{m+1}]$: for all $x\in\overline{\mathcal{D}}$, set
\begin{equation}\label{eq-vtil}
    \vtil_m(t,x) = 
    \begin{cases}
        \Phi\big(\Ztil(t,x)\big), \quad t\in[t_m,t_{m+1}), \\
        \Phi\big(\Ztil^\star_{m+1}(x)\big), \quad t=t_{m+1},
    \end{cases}
\end{equation}
with $\Ztil(t,x)$ given by~\eqref{defZtilda} and $\Ztil^\star_{m+1}$ given by~\eqref{eq-Ztilmstar}. Note that the auxiliary process $\vtil_m$ is almost surely continuous on the interval $[t_m,t_{m+1}]$. More precisely, the mapping $(t,x)\in[t_m,t_{m+1}]\times\overline{\mathcal{D}}\mapsto \vtil_m(t,x)$ is almost surely continuous. Since the mapping $\Phi$ is assumed to satisfy Assumption~\ref{ass-Phi-0}, one has
\begin{equation}\label{eq-gridvtil}
    \vtil_m(t_m,x) = u^\star_m(x), \quad \forall~m\in\{0,\ldots,M-1\}, x\in\overline{\mathcal{D}}.
\end{equation}
In addition, by the definition~\eqref{eq-dpltstar} of the DPLT splitting scheme, one obtains the following identity: for all $m\in\{0,\ldots,M-1\}$ and $ x\in\overline{\mathcal{D}}$, one has
\begin{equation}\label{eq-vtildplt}
    u^\star_{m+1}(x) = \int_\mathcal{D}G_d(\dt,x,y)\vtil_m(t_{m+1},y)\dd y.
\end{equation}
Recall the definition~\eqref{defpsi} of the auxiliary mapping $\psi$ associated to the integrator $\Phi$. Applying It\^{o}'s formula for arbitrary $x\in\overline{\mathcal{D}}$ on the interval $[t_m,t_{m+1}]$ for all $m\in\{0,\ldots,M-1\}$, the auxiliary process $t\in[t_m,t_{m+1}]\mapsto \vtil_m(t,x)$ is a solution to the stochastic differential equation
\begin{equation}
    \begin{split}\label{eq-itovtil}
    \dd\vtil_m(t,x) &= \drfstar\left(\Util_m(x)\right)\partial_\gamma\Phi\big(\Ztil(t,x)\big)\dd t \\
    &\quad + \difstar\left(\Util_m(x)\right)\partial_\gamma\Phi\big(\Ztil(t,x)\big)\dd\beta(t) \\
    &\quad + \difstar\left(\Util_m(x)\right)^2\psi\big(\Ztil(t,x)\big)\dd t,
    \end{split}
\end{equation}
where we recall the definition~\eqref{Util} of $\Util_m(x)$ and the definition~\eqref{defpsi} of $\psi$.

For all $t\in[0,T]$, set $\ell(t) = \sup\{t_m: t_m\leq t, 0\le m\le M\}$.
We now provide the definition of the auxiliary process $\util = \left(\util(t,x)\right)_{t\in[0,T],x\in\overline{\mathcal{D}}}$:
for all $x\in\overline{\mathcal{D}}$, set
\begin{equation}\label{eq-auxproc0}
\util(0,x) = u_0(x),
\end{equation}
and for all $t\in(0,T]$, set 
\begin{equation}
    \begin{split}\label{eq-auxproc}
    \util(t,x) &= \int_\mathcal{D}G_d(t,x,y)u_0(y)\dd y \\
    &\quad + \int_0^t\int_\mathcal{D}G_d\left(t-\ell(s),x,y\right)\drfstar\left(\ell(s),y,\util\left(\ell(s),y\right)\right)\partial_\gamma\Phi\big(\Ztil(s,y)\big)\dd y\dd s \\
    &\quad + \int_0^t\int_\mathcal{D}G_d\left(t-\ell(s),x,y\right)\difstar\left(\ell(s),y,\util\left(\ell(s),y\right)\right)\partial_\gamma\Phi\big(\Ztil(s,y)\big)\dd y\dd\beta(s) \\
    &\quad + \int_0^t\int_\mathcal{D}G_d\left(t-\ell(s),x,y\right)\difstar\left(\ell(s),y,\util\left(\ell(s),y\right)\right)^2\psi\big(\Ztil(s,y)\big)\dd y\dd s.
    \end{split}
\end{equation}

To simplify notation in the proofs below, for all $s\in[0,T]$ and $y\in\overline{\mathcal{D}}$, set
	\begin{equation}\label{Utilbis}
	\Util(s,y)=\bigl(s,y,\util(s,y)\bigr).
	\end{equation}

In Proposition~\ref{prop-utilexpr} we provide two key results. First, we show that the auxiliary process $\util$ coincides with the DPLT splitting scheme~\eqref{eq-dpltstar} at the time-grid points $t_m$. Second, we provide an expression of $\util(t,x)$ depending on the auxiliary process $\vtil_m$ defined by~\eqref{eq-vtil}, which is a generalisation of the expression~\eqref{eq-vtildplt} to arbitrary times $t\in[0,T]$.

\begin{proposition}\label{prop-utilexpr}
    Let $\bigl(u_m^\star\bigr)_{0\le m\le M}$ denote the numerical solution given by the  DPLT splitting scheme~\eqref{eq-dpltstar}. 

    For the auxiliary process $\util$ defined by~\eqref{eq-auxproc} one has, almost surely, the identity
    \begin{equation}\label{utilexpr1}
    \util(t_m,x) = u^\star_m(x),\quad\forall~m\in\{0,\ldots,M\},~x\in\overline{\mathcal{D}},
    \end{equation}
	meaning that the auxiliary continuous-time process $\util$ coincides with the numerical scheme at the time-grid points.

	Furthermore, for all $m\in\{0,\ldots,M-1\}$, for the auxiliary process $\vtil_m$ defined by~\eqref{eq-vtil}, one has almost surely the identity
    \begin{equation}\label{utilexpr2}
        \util(t,x) = \int_\mathcal{D}G_d(t-t_m,x,y)\vtil_m(t,y)\dd y,\quad \forall~t\in(t_m,t_{m+1}), ~x\in\overline{\mathcal{D}}.
    \end{equation}
    \end{proposition}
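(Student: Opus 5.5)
We argue by induction on $m$, proving \eqref{utilexpr1} at $t_m$ and \eqref{utilexpr2} on $(t_m,t_{m+1})$ together. The base case $\util(t_0,\cdot)=u_0=u^\star_0$ is \eqref{eq-auxproc0}. Suppose $\util(t_k,\cdot)=u^\star_k$ for all $k\le m$. Then for $s\in[t_k,t_{k+1})$ with $k\le m$ we have $\ell(s)=t_k$ and $\Util(\ell(s),y)=\Util_k(y)$, so the coefficients in \eqref{eq-auxproc} are exactly those in the Itô equation \eqref{eq-itovtil} for $\vtil_k$. Hence, for $t\in(t_m,t_{m+1}]$, we split the time integrals in \eqref{eq-auxproc} over $[t_k,t_{k+1}]$ for $k<m$ and over $[t_m,t]$.

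\textbf{Key identity.} For $k\le m$ and $r\in[t_k,t_{k+1}]$ (with $r\le t$), I will show
\[
\int_{t_k}^{r}\!\!\int_\mathcal{D}G_d(t-t_k,x,y)\,\dd\vtil_k(s,y)\,\dd y=\int_\mathcal{D}G_d(t-t_k,x,y)\bigl(\vtil_k(r,y)-u^\star_k(y)\bigr)\dd y .
\]
This says the kernel $G_d(t-t_k,x,\cdot)$, which does not depend on $s$, can be pulled out of the $\dd s$ and $\dd\beta(s)$ integrals. For the $\dd s$ terms this is Fubini. For the stochastic term we need the stochastic Fubini theorem; its hypothesis holds because the integrand is $G_d(t-t_k,x,y)\difstar(\Util_k(y))\partial_\gamma\Phi(\Ztil(s,y))$, and $\difstar$ is bounded by \eqref{eq-boundafstar} while Assumption~\ref{ass-Phi-bdd} gives $|\partial_\gamma\Phi(\Ztil)|\le C\e^{C(|\Ztil_2|+\Ztil_1)}$, which has all moments because $\Ztil$ is Gaussian with bounded coefficients. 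Lemma~\ref{la-propHK-bdd} then bounds the $y$-integral, so the integrand lies in $L^2(\Omega\times[t_k,r]\times\mathcal{D})$. The left endpoint value comes from \eqref{eq-gridvtil}. At $r=t_{k+1}$ the continuity of $\vtil_k$ gives the value $\vtil_k(t_{k+1},y)=\Phi(\Ztil^\star_{k+1}(y))$.

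\textbf{Telescoping.} Inserting the key identity into \eqref{eq-auxproc}, we get
\[
\util(t,x)=\int G_d(t,x,y)u_0\,\dd y+\sum_{k<m}\int G_d(t-t_k,x,y)\bigl(\vtil_k(t_{k+1},y)-u^\star_k(y)\bigr)\dd y+\int G_d(t-t_m,x,y)\bigl(\vtil_m(t,y)-u^\star_m(y)\bigr)\dd y .
\]
We rewrite each term using the semigroup property, Lemma~\ref{la-propHK-semi}, together with \eqref{eq-vtildplt}:
\[
\int G_d(t-t_k,x,y)\vtil_k(t_{k+1},y)\,\dd y=\int G_d(t-t_{k+1},x,z)\,u^\star_{k+1}(z)\,\dd z .
\]
When $t=t_{k+1}$, use \eqref{eq-vtildplt} directly instead. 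The sum then telescopes:
\begin{itemize}
\item the $+$ part at index $k$ cancels the $-u^\star_{k+1}$ part at index $k+1$;
\item the initial term $\int G_d(t,x,y)u_0\,\dd y$ cancels the $-u^\star_0$ part at index $0$.
\end{itemize}
What remains is $\int_\mathcal{D}G_d(t-t_m,x,y)\vtil_m(t,y)\,\dd y$, which is \eqref{utilexpr2}. Taking $t=t_{m+1}$ and applying \eqref{eq-vtildplt} gives $\util(t_{m+1},\cdot)=u^\star_{m+1}$, which closes the induction.

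\textbf{Main obstacle.} The delicate step is justifying the stochastic Fubini interchange and ensuring the identity holds almost surely simultaneously for all $t$ and $x$, not merely for fixed $(t,x)$. For fixed $(t,x)$ the integrability bounds above suffice. For the uniform statement we first obtain the identity on a countable dense set of $(t,x)$, then extend by continuity: $(t,x)\mapsto\int G_d(t-t_m,x,y)\vtil_m(t,y)\,\dd y$ is continuous on $(t_m,t_{m+1})\times\overline{\mathcal D}$, and we need a continuous version of $\util$ from \eqref{eq-auxproc}. Alternatively, one can simply take \eqref{utilexpr2} as defining the continuous version of $\util$.
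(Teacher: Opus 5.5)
Your proof is correct and follows the paper's argument: you split the time integrals in \eqref{eq-auxproc} over the grid subintervals, replace each piece by $\vtil_k(t_{k+1},\cdot)-\vtil_k(t_k,\cdot)$ via the It\^o equation \eqref{eq-itovtil}, and then use the semigroup property together with \eqref{eq-gridvtil} and \eqref{eq-vtildplt} to telescope. The differences are only organisational: you run a single induction that gives \eqref{utilexpr2} on $(t_m,t_{m+1}]$ and reads off \eqref{utilexpr1} at the endpoint, whereas the paper proves \eqref{utilexpr1} first and then \eqref{utilexpr2}; you also make explicit the stochastic Fubini step and the choice of versions, which the paper uses tacitly.
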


\begin{proof}
    We prove the first claim~\eqref{utilexpr1} by induction on the index $m$.
    First, it follows from~\eqref{eq-auxproc0} that the claim~\eqref{utilexpr1} is verified for the index $m=0$.
    Now, consider a positive integer $m\in\{1,\ldots,M\}$ and assume that the claim~\eqref{utilexpr1} holds for all $k\in\{0,\ldots,m-1\}$, i.\,e. that for all $k\in\{0,\ldots,m-1\}$ and all $x\in\overline{\mathcal{D}}$ one has $\util(t_k,x) = u^\star_k(x)$. Let us prove that the claim~\eqref{utilexpr1} then holds for the integer $m$.
    
    Applying the definition~\eqref{eq-auxproc} of $\util$ at time $t_m$ and the induction assumption $\util(t_k,x) = u^\star_k(x)$ for $k\in\{0,\ldots,m-1\}$, one obtains
    \begin{align*}
        \util(t_m,x) &= \int_\mathcal{D}G_d(t_m,x,y)u_0(y)\dd y \\
        &\quad + \int_0^{t_m}\int_\mathcal{D}G_d\left(t_m-\ell(s),x,y\right)\drfstar\left(\ell(s),y,\util\left(\ell(s),y\right)\right)\partial_\gamma\Phi\big(\Ztil(s,y)\big)\dd y\dd s \\
        &\quad + \int_0^{t_m}\int_\mathcal{D}G_d\left(t_m-\ell(s),x,y\right)\difstar\left(\ell(s),y,\util\left(\ell(s),y\right)\right)\partial_\gamma\Phi\big(\Ztil(s,y)\big)\dd y\dd\beta(s) \\
        &\quad + \int_0^{t_m}\int_\mathcal{D}G_d\left(t_m-\ell(s),x,y\right)\difstar\left(\ell(s),y,\util\left(\ell(s),y\right)\right)^2\psi\big(\Ztil(s,y)\big)\dd y\dd s \\
        &= \int_\mathcal{D}G_d(t_m,x,y)u_0(y)\dd y \\
        &\quad + \sum_{k=0}^{m-1}\int_\mathcal{D}G_d(t_m-t_k,x,y)\int_{t_k}^{t_{k+1}}\drfstar\left(t_k,y,u^\star_k(y)\right)\partial_\gamma\Phi\big(\Ztil(s,y)\big)\dd s\dd y \\
        &\quad + \sum_{k=0}^{m-1}\int_\mathcal{D}G_d(t_m-t_k,x,y)\int_{t_k}^{t_{k+1}}\difstar\left(t_k,y,u^\star_k(y)\right)\partial_\gamma\Phi\big(\Ztil(s,y)\big)\dd\beta(s)\dd y \\
        &\quad + \sum_{k=0}^{m-1}\int_\mathcal{D}G_d(t_m-t_k,x,y)\int_{t_k}^{t_{k+1}}\difstar\left(t_k,y,u^\star_k(y)\right)^2\psi\big(\Ztil(s,y)\big)\dd s\dd y.
    \end{align*}
    For all $k\in\{0,\ldots,m-1\}$, using the stochastic differential equation~\eqref{eq-itovtil} satisfied by the auxiliary mapping $\vtil_k$ on the interval $[t_k,t_{k+1}]$, one obtains the expression
    \begin{equation*}
        \util(t_m,x) = \int_\mathcal{D}G_d(t_m,x,y)u_0(y)\dd y + \sum_{k=0}^{m-1}\int_\mathcal{D}G_d(t_m-t_k,x,y)\Bigl(\vtil_k(t_{k+1},y) - \vtil_k(t_k,y)\Bigr)\dd y.
    \end{equation*}
    Using the semigroup property of the heat kernel from Lemma~\ref{la-propHK-semi}, one obtains
    \begin{align*}
        \util(t_m,x) &= \int_\mathcal{D}G_d(t_m,x,y)u_0(y)\dd y \\
        &\quad + \mathds{1}_{m\geq2}\sum_{k=0}^{m-2}\int_\mathcal{D}G_d(t_m-t_{k+1},x,y)\int_\mathcal{D}G_d(\dt,y,z)\vtil_k(t_{k+1},z)\dd z\dd y \\
        &\quad - \mathds{1}_{m\geq2}\sum_{k=0}^{m-2}\int_\mathcal{D}G_d(t_m-t_k,x,y)\vtil_k(t_k,y)\dd y \\
        &\quad + \int_\mathcal{D}G_d(\dt,x,y)\Bigl(\vtil_{m-1}(t_m,y) - \vtil_{m-1}(t_{m-1},y)\Bigr)\dd y.
    \end{align*}
	Taking into account the properties~\eqref{eq-gridvtil} and~\eqref{eq-vtildplt} which provide expressions of $u^\star_{k}$ and $u^\star_{k+1}$ depending on $\vtil_k(t_k,\cdot)$ and $\vtil_k(t_{k+1},\cdot)$, and applying a telescoping sum argument, one obtains
    \begin{align*}
        \util(t_m,x) &= \int_\mathcal{D}G_d(t_m,x,y)u_0(y)\dd y \\
        &\quad + \mathds{1}_{m\geq2}\sum_{k=0}^{m-2}\left(\int_\mathcal{D}G_d(t_m-t_{k+1},x,y)u^\star_{k+1}(y)\dd y - \int_\mathcal{D}G_d(t_m-t_k,x,y)u^\star_k(y)\dd y\right) \\
        &\quad + u^\star_m(x) - \int_\mathcal{D}G_d(\dt,x,y)u^\star_{m-1}(y)\dd y \\
        &= u^\star_m(x).
    \end{align*}
    As a result, the claim~\eqref{utilexpr1} holds for the index $m$. Applying the recursion argument shows that the first claim~\eqref{utilexpr1} thus holds for all $m\in\{0,\ldots,M\}$.

    Let us now proceed with the proof of the second claim~\eqref{utilexpr2}.
    Let $m\in\{0,\ldots,M-1\}$, $t\in(t_m,t_{m+1}),  x\in\overline{\mathcal{D}}$. 
    Then, using the definition of the auxiliary process $\util$~\eqref{eq-auxproc}, one has
    \begin{align*}
        \util(t,x) &= \int_\mathcal{D}G_d(t,x,y)u_0(y)\dd y \\
        &\quad + \mathds{1}_{m\geq1}\sum_{k=0}^{m-1}\int_\mathcal{D}G_d(t-t_k,x,y)\int_{t_k}^{t_{k+1}}\drfstar\left(t_k,y,u^\star_k(y)\right)\partial_\gamma\Phi\big(\Ztil(s,y)\big)\dd s\dd y \\ 
        &\quad + \mathds{1}_{m\geq1}\sum_{k=0}^{m-1}\int_\mathcal{D}G_d(t-t_k,x,y)\int_{t_k}^{t_{k+1}}\difstar\left(t_k,y,u^\star_k(y)\right)\partial_\gamma\Phi\big(\Ztil(s,y)\big)\dd\beta(s)\dd y \\
        &\quad + \mathds{1}_{m\geq1}\sum_{k=0}^{m-1}\int_\mathcal{D}G_d(t-t_k,x,y)\int_{t_k}^{t_{k+1}}\difstar\left(t_k,y,u^\star_k(y)\right)^2\psi\big(\Ztil(s,y)\big)\dd s\dd y \\
        &\quad + \int_\mathcal{D}G_d(t-t_m,x,y)\int_{t_m}^{t}\drfstar\left(t_m,y,u^\star_m(y)\right)\partial_\gamma\Phi\big(\Ztil(s,y)\big)\dd s\dd y \\
        &\quad + \int_\mathcal{D}G_d(t-t_m,x,y)\int_{t_m}^{t}\difstar\left(t_m,y,u^\star_m(y)\right)\partial_\gamma\Phi\big(\Ztil(s,y)\big)\dd\beta(s)\dd y \\
        &\quad + \int_\mathcal{D}G_d(t-t_m,x,y)\int_{t_m}^{t}\difstar\left(t_m,y,u^\star_m(y)\right)^2\psi\big(\Ztil(s,y)\big)\dd s\dd y.
    \end{align*}
    Using the stochastic differential equation~\eqref{eq-itovtil} on the interval $[t_k,t_{k+1}]$ for the auxiliary function $\vtil_k$ when $k\in\{0,\ldots,m-1\}$, and on the interval $[t_m,t]$ for the auxiliary function $\vtil_m$, one then obtains
    \begin{align*}
        \util(t,x) &= \int_\mathcal{D}G_d(t,x,y)u_0(y)\dd y \\
        &\quad + \mathds{1}_{m\geq1}\sum_{k=0}^{m-1}\int_\mathcal{D}G_d(t-t_k,x,y)\Bigl(\vtil_k(t_{k+1},y) - \vtil_k(t_k,y)\Bigr)\dd y \\
        &\quad + \int_\mathcal{D}G_d(t-t_m,x,y)\Bigl(\vtil_m(t,y)-\vtil_m(t_m,y)\Bigr)\dd y.
    \end{align*}
    By the semigroup property of the heat kernel from Lemma~\ref{la-propHK-semi}, one has
    \begin{align*}
        \util(t,x) &= \int_\mathcal{D}G_d(t,x,y)u_0(y)\dd y \\
        &\quad + \mathds{1}_{m\geq1}\sum_{k=0}^{m-1}\int_\mathcal{D}G_d(t-t_{k+1},x,y)\int_{\mathcal{D}}G_d(\dt,y,z)\vtil_k(t_{k+1},z)\dd z\dd y \\
        &\quad - \mathds{1}_{m\geq1}\sum_{k=0}^{m-1}\int_{\mathcal{D}}G_d(t-t_k,x,y)\vtil_k(t_k,y)\dd y \\
        &\quad + \int_\mathcal{D}G_d(t-t_m,x,y)\bigg(\vtil_m(t,y)-\vtil_m(t_m,y)\bigg)\dd y.
    \end{align*}
    Finally, taking into account the properties~\eqref{eq-gridvtil} and~\eqref{eq-vtildplt} which provide expressions of $u^\star_{k}$ and $u^\star_{k+1}$ depending on $\vtil_k(t_k,\cdot)$ and $\vtil_k(t_{k+1},\cdot)$, and applying a telescoping sum argument, one obtains
    \begin{align*}
        \util(t,x) &= \int_\mathcal{D}G_d(t,x,y)u_0(y)\dd y \\
        &\quad + \mathds{1}_{m\geq1}\sum_{k=0}^{m-1}\left(\int_\mathcal{D}G_d(t-t_{k+1},x,y)u^\star_{k+1}(y)\dd y - \int_\mathcal{D}G_d(t-t_k,x,y)u^\star_k(y)\dd y\right) \\
        &\quad + \int_\mathcal{D}G_d(t-t_m,x,y)\vtil_m(t,y)\dd y - \int_\mathcal{D}G_d(t-t_m,x,y)u^\star_m(y)\dd y \\
        &= \int_\mathcal{D}G_d(t-t_m,x,y)\vtil_m(t,y)\dd y.
    \end{align*}
    The proof of the second claim~\eqref{utilexpr2} is thus completed.
    
    This concludes the proof of Proposition~\ref{prop-utilexpr}.
\end{proof}

Next, Proposition~\ref{prop-Ztil} provides several properties of the auxiliary process $\Ztil$ defined by~\eqref{defZtilda}.
We refer to~\cite{manu} for similar statements in the analysis of domain preserving schemes for stochastic differential equations.
\begin{proposition}\label{prop-Ztil}
    Let Assumptions~\ref{ass-ic},~\ref{ass-driftdiff}, and~\ref{ass-Phi} be satisfied. Let $T\in(0,\infty)$.
    
    For all $p\in\N$, there exists $C_p(T)\in(0,\infty)$ such that 
    for all $m\in\{0,\ldots,M-1\}$ and all $t\in[t_m,t_{m+1})$, one has
    \begin{equation}\label{eq-regZtil}
        \underset{x\in\overline{\mathcal{D}}}\sup~\E\left[\big\|\Ztil(t,x) - \Ztil(t_m,x)\big\|^{2p}\right] \leq C_p(T)\abs{t-t_m}^p.
    \end{equation}
    Moreover, there exists $C(T)\in(0,\infty)$ such that one has
    \begin{equation}\label{eq-momZtil}
        \underset{t\in[0,T]}\sup~\underset{x\in\overline{\mathcal{D}}}\sup~\E\left[\big\|\Ztil(t,x)\big\|^2\right] \leq C(T).
    \end{equation}
    Finally, for all $p\in\N$, there exists $C_p(T)\in(0,\infty)$ such that for all $m\in\{0,\ldots,M-1\}$ and all $t\in[t_m,t_{m+1})$, one has
    \begin{equation}\label{eq-derPhi}
     \sum_{\alpha_s + \alpha_\gamma + \alpha_v \leq 3}~\underset{x\in\overline{\mathcal{D}}}\sup~\E\left[\sup_{c\in[0,1]}\abs{\partial_s^{\alpha_s}\partial_\gamma^{\alpha_\gamma}\partial_v^{\alpha_v}\Phi\big(c\Ztil(t,x) + (1-c)\Ztil(t_m,x)\big)}^{2p}\right] \leq C_p(T).
    \end{equation}
\end{proposition}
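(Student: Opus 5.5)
The plan is to argue directly from the explicit form~\eqref{defZtilda} of $\Ztil$. The key inputs are the uniform bound~\eqref{eq-boundafstar} on $\drfstar$ and $\difstar$, Proposition~\ref{prop-dplt} (which gives $|u^\star_m(x)|\le1$), and Gaussian moment bounds for Brownian increments. Throughout, fix $m\in\{0,\ldots,M-1\}$, $t\in[t_m,t_{m+1})$ and $x\in\overline{\mathcal{D}}$, and write $h=t-t_m\in[0,\dt)\subset[0,T]$.

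\textbf{The increment bound~\eqref{eq-regZtil}.} Since $\Ztil(t_m,x)=(0,0,u_m^\star(x))$, the difference is
\[
\Ztil(t,x)-\Ztil(t_m,x)=\bigl(\difstar(\Util_m(x))^2h,\ \drfstar(\Util_m(x))h+\difstar(\Util_m(x))(\beta(t)-\beta(t_m)),\ 0\bigr).
\]
Using~\eqref{eq-boundafstar}, its norm is bounded by $C(T)\bigl(h+|\beta(t)-\beta(t_m)|\bigr)$. The coefficients $\difstar(\Util_m(x))$ and $\drfstar(\Util_m(x))$ are bounded deterministically, so no independence argument is needed. Hence
\[
\E\bigl[\|\cdot\|^{2p}\bigr]\le C_p(T)\bigl(h^{2p}+\E|\beta(t)-\beta(t_m)|^{2p}\bigr)\le C_p(T)\bigl(T^ph^p+h^p\bigr),
\]
uniformly in $x$.

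\textbf{The second-moment bound~\eqref{eq-momZtil}.} Since $|u^\star_m(x)|\le1$ almost surely, we have $\|\Ztil(t,x)\|\le 1+\|\Ztil(t,x)-\Ztil(t_m,x)\|$. Applying the previous bound with $p=1$ and $h\le T$ gives~\eqref{eq-momZtil}. The endpoint $t=T$ is trivial, since there $\Ztil(T,x)=(0,0,u^\star_M(x))$.

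\textbf{The derivative bound~\eqref{eq-derPhi}.} This is the only step requiring care, and it reduces to exponential moments of the Brownian increment.

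First, the convex combination must lie in the domain of $\Phi$. Write $Z_c=c\Ztil(t,x)+(1-c)\Ztil(t_m,x)$. Its first component is $c\,\difstar(\Util_m(x))^2h\ge0$, its third component is $u^\star_m(x)\in[-1,1]$, and its second component is $c$ times the second component of $\Ztil(t,x)$. Hence $Z_c\in[0,\infty)\times\R\times[-1,1]$.

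Next, by Assumption~\ref{ass-Phi-bdd}, each derivative of order at most $3$ is bounded by $C\e^{C(|\gamma|+s)}$. For all $c\in[0,1]$ we have $s+|\gamma|\le C(T)\bigl(h+|\beta(t)-\beta(t_m)|\bigr)$, so the supremum over $c$ is bounded by $C\exp\bigl(C(T)(T+|\beta(t)-\beta(t_m)|)\bigr)$.

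Raising this to the power $2p$ and taking expectations leaves
\[
\E\bigl[\e^{K|\beta(t)-\beta(t_m)|}\bigr]\le 2\e^{K^2h/2}\le 2\e^{K^2T/2},\qquad K=2pCC(T).
\]
This follows from $\e^{K|y|}\le\e^{Ky}+\e^{-Ky}$ and the Gaussian moment generating function. The resulting bound is uniform in $x$, $m$ and $t$, and summing over the finitely many multi-indices concludes the proof.
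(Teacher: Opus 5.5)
Your proposal is correct and follows essentially the same route as the paper. Both arguments use the explicit increment together with the uniform bound \eqref{eq-boundafstar} and Gaussian moments for \eqref{eq-regZtil}, the triangle inequality with $|u^\star_m(x)|\le 1$ for \eqref{eq-momZtil}, and Assumption~\ref{ass-Phi-bdd} reduced to a Gaussian exponential moment of $|\beta(t)-\beta(t_m)|$ for \eqref{eq-derPhi}. The paper leaves implicit two small points that you check explicitly: that $c\Ztil(t,x)+(1-c)\Ztil(t_m,x)$ stays in the domain of $\Phi$, and that the endpoint $t=T$ is trivial.
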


\begin{proof}
    Let $m\in\{0,\ldots,M-1\}, t\in[t_m,t_{m+1}), x\in\overline{\mathcal{D}}$. Recall the notation $\Util_m(x)=(t_m,x,u^\star_m(x))$. Then, by the definition~\eqref{defZtilda}
    of the process $\Ztil$, one obtains 
\[
\Ztil(t,x)-\Ztil(t_m,x)=\left(\difstar(\Util_m(x))^2(t-t_m),\drfstar(\Util_m(x))(t-t_m) + \difstar(\Util_m(x))\bigl(\beta(t) - \beta(t_m)\bigr),0\right).
\]
Owing to Proposition~\ref{prop-dplt}, almost surely one has $|u^\star_m(x)|\leq1$ for all $m\in\{0,\ldots,M\}$ and all $x\in\overline{\mathcal{D}}$. Moreover, the mappings $\drfstar$ and $\difstar$ defined by~\eqref{eq-afstar} are bounded on $[0,\infty)\times\overline{\mathcal{D}}\times\R$, see equation~\eqref{eq-boundafstar}. As a result, for all $p\in\N$ there exist $C_p,C_p(T)\in(0,\infty)$ such that one has
    \begin{align*}
        \E\left[\big\|\Ztil(t,x) - \Ztil(t_m,x)\big\|^2\right] &\leq C_p\E\left[\abs{\difstar(t_m,x,u^\star_m(x))^2(t-t_m)}^{2p}\right] \\
        &\quad +C_p\E\left[\abs{\drfstar(t_m,x,u^\star_m(x))(t-t_m)}^{2p} \right] \\
        &\quad +C_p\E\left[\abs{\difstar(t_m,x,u^\star_m(x))\left(\beta(t) - \beta(t_m)\right)}^{2p}\right] \\
        &\leq C_p(T)\left(\abs{t-t_m}^{2p} + \E\bigl[\abs{\beta(t)-\beta(t_m)}^{2p}\bigr]\right).
    \end{align*}
Note that $\E\bigl[\abs{\beta(t)-\beta(t_m)}^{2p}\bigr]=\abs{t-t_m}^p\E[\abs{\mathcal{Z}}^{2p}]$ if $\mathcal{Z}\sim\mathcal{N}(0,1)$ is a standard real-valued Gaussian random variable, and that one has the upper bound $\abs{t-t_m}^{2p}\le T^p\abs{t-t_m}^{p}$ for all $t,t_m\in[0,T]$. As a result, one obtains the temporal regularity property~\eqref{eq-regZtil} for the process $\Ztil$.

Next, let us prove the moment bound~\eqref{eq-momZtil}. Recalling that $\Ztil(t_m,x)=\bigl(0,0,u^\star_m(x)\bigr)$, for all $m\in\{0,\ldots,M-1\}$, $t\in[t_m,t_{m+1})$ and $x\in\overline{\mathcal{D}}$, one has
    \begin{align*}
        \E\bigl[\|\Ztil(t,x)\|^2\bigr] &\leq 2\left(\E\left[\big\|\Ztil(t,x) - \Ztil(t_m,x)\big\|^2\right] + \E\left[\big\|\Ztil(t_m,x)\big\|^2\right]\right) \\
        &= 2\left(\E\left[\big\|\Ztil(t,x) - \Ztil(t_m,x)\big\|^2\right] + \E\left[\abs{u^\star_m(x)}^2\right]\right).
    \end{align*}
Applying the temporal regularity property~\eqref{eq-regZtil} proved above, and recalling that owing to Proposition~\ref{prop-dplt} almost surely one has $|u^\star_m(x)|\leq1$ for all $m\in\{0,\ldots,M\}$ and all $x\in\overline{\mathcal{D}}$, one obtains
    \[
    \E\bigl[\|\Ztil(t,x)\|^2\bigr]\leq C(T)(\dt+1)\leq C(T).
    \]
   The proof of the moment bounds~\eqref{eq-momZtil} is thus completed.

   It remains to prove the upper bounds~\eqref{eq-derPhi}. Let $\alpha_s,\alpha_v,\alpha_\gamma$ be nonnegative integers such that $\alpha_s+\alpha_v+\alpha_\gamma\le 3$, and let $p\in\N$.
   By the definition~\eqref{defZtilda} of $\Ztil$ and owing to Assumption~\ref{ass-Phi-bdd} on the mapping $\Phi$, for all $m\in\{0,\ldots,M-1\}$, $t\in[t_m,t_{m+1})$ and $x\in\overline{\mathcal{D}}$, one has
   \begin{align*}
    &\E\left[\abs{\sup_{c\in[0,1]}\partial_s^{\alpha_s}\partial_\gamma^{\alpha_\gamma}\partial_v^{\alpha_v}\Phi\big(c\Ztil(t,x)+(1-c)\Ztil(t_m,x)\big)}^{2p}\right] \\
    &\quad \leq C_p\E\bigg[\sup_{c\in[0,1]}\e^{C_p\bigl(c\abs{\drfstar(t_m,x,u^\star_m(x))}(t-t_m) + c|\difstar(t_m,x,u^\star_m(x))(\beta(t)-\beta(t_m))| + c\difstar(t_m,x,u^\star_m(x))^2(t-t_m)\bigr)}\bigg]\\
    &\quad \leq C_p\E\bigg[\e^{C_p\bigl(\abs{\drfstar(t_m,x,u^\star_m(x))}(t-t_m) + |\difstar(t_m,x,u^\star_m(x))(\beta(t)-\beta(t_m))| + \difstar(t_m,x,u^\star_m(x))^2(t-t_m)\bigr)}\bigg].
   \end{align*}
	The mappings $\drfstar$ and $\difstar$ defined by~\eqref{eq-afstar} are bounded on $[0,T]\times\overline{\mathcal{D}}\times\R$, see~\eqref{eq-boundafstar}, thus there exists $C_p(T)\in(0,\infty)$ such that
   \begin{align*}
    \E\left[\abs{\sup_{c\in[0,1]}\partial_s^{\alpha_s}\partial_\gamma^{\alpha_\gamma}\partial_v^{\alpha_v}\Phi\big(c\Ztil(t,x)+(1-c)\Ztil(t_m,x)\big)}^{2p}\right] 
    &\leq C_p\E\left[\e^{C_p(T)\bigl((t-t_m) + |\beta(t)-\beta(t_m)|\bigr)}\right]\\
    & \leq C_p(T)\E\left[\e^{C_p(T)\abs{\beta(t)-\beta(t_m)}}\right].
   \end{align*}
   Let $\mathcal{Z}\sim\mathcal{N}(0,1)$ be a standard real-valued Gaussian random variable. Then one has
\[
\E\left[\e^{C_p(T)|\beta(t)-\beta(t_m)|}\right]=\E\left[\e^{C_p(T)\sqrt{t-t_m}|\mathcal{Z}|}\right]\le \E\left[\e^{C_p(T)\sqrt{T}|\mathcal{Z}|}\right].
\]   
Recalling that all the exponential moments of the standard Gaussian random variable $\mathcal{Z}$ are finite, i.\,e. that $\E\left[\e^{c|\mathcal{Z}|}\right]<\infty$ for all $c\in(0,\infty)$, one obtains the upper bounds~\eqref{eq-derPhi}.

This concludes the proof of Proposition~\ref{prop-Ztil}.
\end{proof}

Proposition~\ref{prop-utilprops} provides several important results on the auxiliary process $\util$ which play a crucial role in the proof of Theorem~\ref{thm-mainthm}. Specifically, $\util$ is almost surely continuous and takes values in the domain $[-1,1]$. Moreover, a temporal regularity result in the mean-square sense is also provided.

\begin{proposition}\label{prop-utilprops}
    Let Assumptions~\ref{ass-ic},~\ref{ass-driftdiff}, and~\ref{ass-Phi} be satisfied. Consider the auxiliary process $\util$ defined by~\eqref{eq-auxproc} with time-step size $\dt=T/M$.
    
	First, almost surely, the mapping $(t,x)\in[0,T]\times\overline{\mathcal{D}}\mapsto \util(t,x)$ is continuous.
	In addition, $\util$ takes values in the domain $[-1,1]$ almost surely: one has 
    \begin{equation}\label{eq-dputil}
        \PP\bigg(\util(t,x)\in[-1,1],\quad \forall~t\in[0,T],~x\in\overline{\mathcal{D}}\bigg) = 1.
    \end{equation}
    Finally, for all $\varepsilon\in(0,\frac12)$, there exists $C_\varepsilon(T)\in(0,\infty)$ such that for all for all $m\in\{1,\ldots,M-1\}$ and all $t\in(t_m,t_{m+1})$ one has
    \begin{equation}\label{eq-temputil}
        \underset{x\in\overline{\mathcal{D}}}\sup~\mse{\util(t,x)-\util(t_m,x)}^2
        \leq C_\varepsilon(T)\frac{\abs{t-t_m}^{1-2\varepsilon}}{t_m^{1-2\varepsilon}}.
    \end{equation}
\end{proposition}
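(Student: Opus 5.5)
The plan is to work interval by interval, using the representation~\eqref{utilexpr2} of Proposition~\ref{prop-utilexpr} throughout. On each $(t_m,t_{m+1})$ it expresses $\util$ as the heat semigroup applied to the auxiliary process $\vtil_m$. At the grid points we use~\eqref{utilexpr1}.

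\emph{Continuity.} Fix $m$. The map $(t,y)\mapsto\vtil_m(t,y)$ is almost surely continuous on $[t_m,t_{m+1}]\times\overline{\mathcal{D}}$, as noted after~\eqref{eq-vtil}. It is also bounded by $1$ by Assumption~\ref{ass-Phi-dp}, since $u^\star_m(y)\in[-1,1]$ by Proposition~\ref{prop-dplt}.
\begin{itemize}
\item For $t\in(t_m,t_{m+1})$, continuity of $(t,x)\mapsto\int_{\mathcal D}G_d(t-t_m,x,y)\vtil_m(t,y)\dd y$ follows from the smoothness of the heat kernel for positive times together with uniform continuity of $\vtil_m$.
\item As $t\to t_{m+1}^-$, the same expression converges to $\int G_d(\dt,x,y)\vtil_m(t_{m+1},y)\dd y=u^\star_{m+1}(x)=\util(t_{m+1},x)$, by~\eqref{eq-vtildplt} and~\eqref{utilexpr1}.
\item As $t\to t_m^+$, write $\int G_d(t-t_m,x,y)\vtil_m(t,y)\dd y-\int G_d(t-t_m,x,y)u^\star_m(y)\dd y$ plus $e^{(t-t_m)\Delta}u^\star_m(x)$. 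The first part is bounded by $\sup_y|\vtil_m(t,y)-\vtil_m(t_m,y)|\to0$, using Lemma~\ref{la-propHK-bdd} and~\eqref{eq-gridvtil}. The second part tends to $u^\star_m(x)$ uniformly in $x$, because $u^\star_m\in\mathcal{C}_0^0(\overline{\mathcal D})$ (continuous and vanishing on $\partial\mathcal D$, since $u^\star_m$ is a heat-semigroup image for $m\ge1$, and by Assumption~\ref{ass-ic} for $m=0$). Strong continuity of the Dirichlet heat semigroup on $\mathcal{C}_0^0$ then applies.
\end{itemize}

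\emph{Domain preservation.} This follows directly from~\eqref{utilexpr2}. We have $|\vtil_m|\le1$, $G_d\ge0$ and $\int G_d\le1$ by Lemma~\ref{la-propHK-bdd}, and at the grid points we use Proposition~\ref{prop-dplt}. Since everything holds on a single full-measure event, we get~\eqref{eq-dputil}.

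\emph{Temporal regularity.} For $m\ge1$ and $t\in(t_m,t_{m+1})$ split
\[
\util(t,x)-\util(t_m,x)=\int_{\mathcal D}G_d(t-t_m,x,y)\bigl(\vtil_m(t,y)-u^\star_m(y)\bigr)\dd y+\Bigl(\int_{\mathcal D}G_d(t-t_m,x,y)u^\star_m(y)\dd y-u^\star_m(x)\Bigr).
\]
\begin{itemize}
\item \emph{First term.} Apply Jensen with the sub-probability kernel $G_d$, then a mean-value bound $\vtil_m(t,y)-u^\star_m(y)=\Phi(\Ztil(t,y))-\Phi(\Ztil(t_m,y))$. Combining Cauchy--Schwarz with~\eqref{eq-derPhi} and~\eqref{eq-regZtil}, its mean-square norm squared is $\le C(T)|t-t_m|$. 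This is $\le C(T)|t-t_m|^{1-2\varepsilon}t_m^{-(1-2\varepsilon)}$, since $|t-t_m|^{2\varepsilon}\le T^{2\varepsilon}$ and $t_m\le T$.
\item \emph{Second term.} This is the main obstacle, since $u^\star_m$ has no spatial regularity uniform in $\dt$. Instead of spatial smoothness, I would exploit the mild-type structure. Expanding $u^\star_m=\util(t_m,\cdot)$ via~\eqref{eq-auxproc}, the second term equals $(e^{(t-t_m)\Delta}-I)\util(t_m)$. Write it as
\[
\int_{\mathcal D}\bigl(G_d(t-t_m+t_m,x,y)-G_d(t_m,x,y)\bigr)u_0(y)\dd y
\]
plus the analogous differences $G_d(t-\ell(s),\cdot)-G_d(t_m-\ell(s),\cdot)$ inside the three integrals in $s\in[0,t_m]$.
\end{itemize}

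For the terms in the second bullet, Lemma~\ref{la-propHK-reg} with exponent $\alpha$ applies. The deterministic and drift-type integrals (integrands bounded in $L^2(\Omega)$ thanks to~\eqref{eq-boundafstar} and~\eqref{eq-derPhi}) are bounded by $C\int_0^{t_m}|t-t_m|^{\alpha}(t_m-\ell(s))^{-\alpha}\dd s$. For the stochastic integral, Itô's isometry together with Minkowski's integral inequality gives $C\int_0^{t_m}|t-t_m|^{2\alpha}(t_m-\ell(s))^{-2\alpha}\dd s$ for the squared norm. Choosing $2\alpha=1-2\varepsilon<1$ makes this integrable. Since $t_m-\ell(s)\ge t_m-s$, it is bounded by $C_\varepsilon(T)|t-t_m|^{1-2\varepsilon}$. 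The initial-condition term is deterministic: by Lemma~\ref{la-propHK-reg} with $\alpha=\frac12-\varepsilon$ and $|u_0|\le1$, its square is $\le C|t-t_m|^{1-2\varepsilon}t_m^{-(1-2\varepsilon)}$, which is the source of the singular factor in~\eqref{eq-temputil}.

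Collecting the bounds, and using $t_m\le T$ to absorb the non-singular terms, yields~\eqref{eq-temputil} uniformly in $x$.
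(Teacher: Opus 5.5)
Your proposal is correct and is essentially the paper's proof. Continuity and domain preservation are argued the same way from \eqref{utilexpr2}. For \eqref{eq-temputil}, your term $(e^{(t-t_m)\Delta}-I)\util(t_m,\cdot)$, once expanded via \eqref{eq-auxproc} and the semigroup property, is exactly the sum of the quantities the paper estimates in $E_1$ and $E_{j,2}$ with Lemma~\ref{la-propHK-reg} at $\alpha=\frac12-\varepsilon$. By \eqref{eq-itovtil}, your first term is the sum of the $[t_m,t]$ integrals behind $E_{j,1}$.

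There are two small differences. First, you bound this local piece by the mean-value theorem on $\Phi$ with \eqref{eq-regZtil} and \eqref{eq-derPhi} rather than by It\^o isometry; the paper's direct subtraction of \eqref{eq-auxproc} at $t$ and $t_m$ also avoids the stochastic Fubini interchange that your detour through \eqref{utilexpr2} requires. Second, you state explicitly that $u^\star_m$ vanishes on $\partial\mathcal{D}$, which the paper uses only implicitly to get uniform convergence of the Dirichlet heat semigroup as $t\to t_m^+$.
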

\begin{proof}
    {\bf Proof of the continuity property.}
    
    Let $\mathbf{t}\in[0,T]$ and $\mathbf{x}\in\overline{\mathcal{D}}$.
    
    Assume first that $\mathbf{t}\notin\{t_m;~0\le m\le M\}$ is not a time-grid point, i.\,e. that there exists $m\in\{0,\ldots,M-1\}$ such that $\mathbf{t}\in(t_m,t_{m+1})$.
    Recall that the mapping $(t,x)\in[t_m,t_{m+1}]\times\overline{\mathcal{D}}\mapsto \vtil_m(t,x)$ defined by~\eqref{eq-vtil} is almost surely continuous. Owing to Proposition~\ref{prop-dplt} and to Assumption~\ref{ass-Phi-dp}, one has almost surely
    \[
    \underset{t\in[t_m,t_{m+1}]}\sup~\underset{x\in\overline{\mathcal{D}}}\sup~|\vtil_m(t,x)|\le 1.
    \]
    Applying the expression~\eqref{utilexpr2} for $\util$ from Proposition~\ref{prop-utilexpr} and the property of the heat kernel from Lemma~\ref{la-propHK-bdd}, for all $t\in(t_m,t_{m+1})$ and $x\in\overline{\mathcal{D}}$, one has
    \begin{align*}
        \abs{\util(t,x)-\util(\mathbf{t},\mathbf{x})} &= \abs{\int_\mathcal{D}G_d(t-t_m,x,y)\vtil_m(t,y)\dd y - \int_\mathcal{D}G_d(\mathbf{t}-t_m,\mathbf{x},y)\vtil_m(\mathbf{t},y)\dd y} \\
        &\leq \abs{\int_\mathcal{D}\bigl(G_d(t-t_m,x,y) - G_d(\mathbf{t}-t_m,\mathbf{x},y)\bigr)\vtil_m(t,y)\dd y} \\
        &\quad + \abs{\int_\mathcal{D}G_d(\mathbf{t}-t_m,\mathbf{x},y)\bigl(\vtil_m(t,y)-\vtil_m(\mathbf{t},y)\bigr)\dd y} \\
        &\leq \sup_{y\in\overline{\mathcal{D}}}\abs{G_d(t-t_m,x,y)-G_d(\mathbf{t}-t_m,\mathbf{x},y)} \\
        &\quad + \sup_{y\in\overline{\mathcal{D}}}\abs{\vtil_m(t,y)-\vtil_m(\mathbf{t},y)}.
    \end{align*}
    From the joint continuity of the heat kernel $G_d$ on $(0,\infty)\times\overline{\mathcal{D}}^2$ and the almost sure joint continuity of $\vtil_m$ on $[t_m,t_{m+1}]\times\overline{\mathcal{D}}$, one obtains almost surely
    \begin{equation*}
        \lim_{(t,x)\to(\mathbf{t},\mathbf{x})}\abs{\util(t,x)-\util(\mathbf{t},\mathbf{x})}=0.
    \end{equation*}
    The proof of the continuity property is thus completed if $\mathbf{t}\notin\{t_m;~0\le m\le M\}$.

    Assume now that $\mathbf{t}\in\{t_m;~0\le m\le M\}$ is a time-grid point, i.\,e. that there exists $m\in\{0,\ldots,M-1\}$ such that $\mathbf{t}=t_m$. To prove the continuity of $\util$, one needs to consider left and right limits when $t\to \mathbf{t}=t_m$.
    
	On the one hand, consider the limit $t\to t_m^+$, for $m\neq M$.
	Applying the expression~\eqref{utilexpr2} for $\util$ from Proposition~\ref{prop-utilexpr} and the property of the heat kernel from Lemma~\ref{la-propHK-bdd}, for all $t\in(t_m,t_{m+1})$ and $x\in\overline{\mathcal{D}}$, one has
    \begin{align*}
        \abs{\util(t,x)-\util(t_m,\mathbf{x})} &= \abs{\int_\mathcal{D}G_d(t-t_m,x,y)\vtil_m(t,y)\dd y - u^\star_m(\mathbf{x})} \\
        &\leq \abs{\int_\mathcal{D}G_d(t-t_m,x,y)\bigl(\vtil_m(t,y)-\vtil_m(t_m,y)\bigr)\dd y} \\
        &\quad + \abs{\int_\mathcal{D}G_d(t-t_m,x,y)u^\star_m(y)\dd y - u^\star_m(x)} \\
        &\quad + \abs{u^\star_m(x)-u^\star_m(\mathbf{x})} \\
        &\leq \sup_{z\in\overline{\mathcal{D}}}\abs{\vtil_m(t,z)-\vtil_m(t_m,z)} \\
        &\quad + \sup_{z\in\overline{\mathcal{D}}}\abs{\int_\mathcal{D}G_d(t-t_m,z,y)u^\star_m(y)\dd y - u^\star_m(z)} \\
        &\quad + \abs{u^\star_m(x)-u^\star_m(\mathbf{x})}.
    \end{align*}
    Note that $u^\star_m$ is almost surely continuous on $\overline{\mathcal{D}}$, and that almost surely $(t,x)\in[t_m,t_{m+1}]\times\overline{\mathcal{D}}\mapsto \vtil_m(t,x)$ is continuous. As a result, for the first and third terms in the right-hand side above, one has almost surely 
    \[
    \lim_{(t,x)\to(t_m^+,\mathbf{x})}\Bigl(\sup_{z\in\overline{\mathcal{D}}}\abs{\vtil_m(t,z)-\vtil_m(t_m,z)}+\abs{u^\star_m(x)-u^\star_m(\mathbf{x})}\Bigr)=0.
    \]
    Finally, by the joint continuity of the solution to the deterministic heat equation with homogeneous Dirichlet boundary conditions, see e.g.~\cite[Theorem 5.1.11]{MR3012216}, one has almost surely
    \[
    \lim_{(t,x)\to(t_m^+,\mathbf{x})}\sup_{z\in\overline{\mathcal{D}}}\abs{\int_\mathcal{D}G_d(t-t_m,z,y)u^\star_m(y)\dd y - u^\star_m(z)}=0.
    \]
    As a consequence, one has
    \begin{equation*}
    \lim_{(t,x)\to(t_m^+,\mathbf{x})}\abs{\util(t,x)-\util(t_m,\mathbf{x})}=0.
    \end{equation*}
	On the other hand, consider the limit $t\to t_m^-$, for $m\neq 0$. Applying the expression~\eqref{utilexpr2} for $\util$ from Proposition~\ref{prop-utilexpr}, the identity~\eqref{eq-vtildplt} and the property of the heat kernel from Lemma~\ref{la-propHK-bdd}, for all $t\in(t_{m-1},t_{m})$ and $x\in\overline{\mathcal{D}}$ one has
    \begin{align*}
        \abs{\util(t,x)-\util(t_{m},\mathbf{x})} &= \abs{\int_\mathcal{D}G_d(t-t_{m-1},x,y)\vtil_{m-1}(t,y)\dd y - \int_\mathcal{D}G_d(\dt,\mathbf{x},y)\vtil_{m-1}(t_m,y)\dd y} \\
        &\leq \abs{\int_\mathcal{D}\bigl(G_d(t-t_{m-1},x,y) - G_d(t_m-t_{m-1},\mathbf{x},y)\bigr)\vtil_{m-1}(t,y)\dd y} \\
        &\quad + \abs{\int_\mathcal{D}G_d(\dt,\mathbf{x},y)\bigl(\vtil_{m-1}(t,y)-\vtil_{m-1}(t_m,y)\bigr)\dd y} \\
        &\leq \sup_{y\in\overline{\mathcal{D}}}\abs{G_d(t-t_{m-1},x,y)-G_d(t_m-t_m,\mathbf{x},y)} \\
        &\quad + \sup_{y\in\overline{\mathcal{D}}}\abs{\vtil_{m-1}(t,y)-\vtil_{m-1}(t_m,y)}.
    \end{align*}
    From the joint continuity of the heat kernel $G_d$ on $(0,\infty)\times\overline{\mathcal{D}}^2$ and the almost sure joint continuity of $\vtil_{m-1}$ on $[t_{m-1},t_{m}]\times\overline{\mathcal{D}}$, one obtains almost surely
    \begin{equation*}
        \lim_{(t,x)\to(t_m^-,\mathbf{x})}\abs{\util(t,x)-\util(t_m,\mathbf{x})}=0.
    \end{equation*}
    The proof of the continuity property is thus completed if $\mathbf{t}\in\{t_m;~0\le m\le M\}$.

	This concludes the proof of the almost sure continuity of $(t,x)\in[0,T]\times\overline{\mathcal{D}}\mapsto \util(t,x)$.

	{\bf Proof of the domain preservation property~\eqref{eq-dputil}.}
	Owing to the continuity property established above, it suffices to prove that for all $t\in[0,T]$ and all $x\in\overline{\mathcal{D}}$ one has
	\begin{equation}\label{eq:claimDPutil}
	\mathbb{P}\left(\util(t,x)\in[-1,1]\right)=1.
	\end{equation}
	Let $(t,x)\in[0,T]\times\overline{\mathcal{D}}$.
	
	On the one hand, assume that $t\in\{t_m;~0\le m\le M\}$ is a time-grid point. In that case, the claim~\eqref{eq:claimDPutil} follows from the identity~\eqref{utilexpr1} from Proposition~\ref{prop-utilexpr} and the domain preservation property~\eqref{eq-dpdplt} from Proposition~\ref{prop-dplt}.
	
	On the other hand, assume that $t\notin\{t_m;~0\le m\le M\}$ is not a time-grid point. Assume that $t\in(t_m,t_{m+1})$ for some $m\in\{0,\ldots,M-1\}$.
    Applying the expression~\eqref{utilexpr2} from Proposition~\ref{prop-utilexpr} and the boundedness of the integral of the heat kernel given by Lemma~\ref{la-propHK-bdd}, one has almost surely
    \begin{align*}
        \abs{\util(t,x)} &= \abs{\int_\mathcal{D}G_d(t-t_m,x,y)\vtil_m(t,y)\dd y} \\
        &\leq \sup_{y\in\overline{\mathcal{D}}}\abs{\vtil_m(t,y)}\cdot\int_\mathcal{D}G_d(t-t_m,x,y)\dd y \\
        &\leq \sup_{y\in\overline{\mathcal{D}}}\abs{\vtil_m(t,y)}.
    \end{align*}
	Recall that $\vtil_m$ is defined by~\eqref{eq-vtil} and is almost surely continuous on $[t_m,t_{m+1}]\times\overline{\mathcal{D}}$. Applying Assumption~\ref{ass-Phi-dp} on the mapping $\Phi$, one has almost surely
	\[
	\abs{\util(t,x)}\leq\sup_{y\in\overline{\mathcal{D}}}\abs{\vtil_m(t,y)}\le 1.
	\]
The claim~\eqref{eq:claimDPutil} is thus verified in the second case.
    
    This concludes the proof of the domain preservation property~\eqref{eq-dputil}.

	{\bf Proof of the temporal regularity property~\eqref{eq-temputil}.}

	Recall that $\Util(s,y)$ is defined by~\eqref{Utilbis} for all $s\in[0,T]$ and $y\in\overline{\mathcal{D}}$.

    Let $\varepsilon\in(0,\frac12)$ and $m\in\{1,\ldots,M-1\}$. Owing to the definition~\eqref{eq-auxproc} of $\util(t,x)$, for all $t\in(t_m,t_{m+1})$ and all $x\in\overline{\mathcal{D}}$, one has    
    \begin{equation*}
        \mse{\util(t,x) - \util(t_m,x)}^2 \leq 4\bigl(E_1(t,x)+E_2(t,x)+E_3(t,x)+E_4(t,x)\bigr),
    \end{equation*}
    where
    \begin{align*}
        E_1(t,x) &= \abs{\int_\mathcal{D}G_d(t,x,y)u_0(y)\dd y - \int_\mathcal{D}G_d(t_m,x,y)u_0(y)\dd y}^2, \\        
        E_2(t,x) &= \left\|\int_0^t\int_\mathcal{D}G_d(t-\ell(s),x,y)\drfstar\bigl(\Util(\ell(s),y)\bigr)\partial_\gamma\Phi\big(\Ztil(s,y)\big)\dd y\dd s\right. \\
        &\quad \left. - \int_0^{t_m}\int_\mathcal{D}G_d(t_m-\ell(s),x,y)\drfstar\bigl(\Util(\ell(s),y)\bigr)\partial_\gamma\Phi\big(\Ztil(s,y)\big)\dd y\dd s\right\|_{L^2(\Omega)}^2, \\
        E_3(t,x) &= \left\|\int_0^t\int_\mathcal{D}G_d(t-\ell(s),x,y)\difstar\bigl(\Util(\ell(s),y)\bigr)\partial_\gamma\Phi\big(\Ztil(s,y)\big)\dd y\dd\beta(s) \right.  \\
        &\quad \left. - \int_0^{t_m}\int_\mathcal{D}G_d(t_m-\ell(s),x,y)\difstar\bigl(\Util(\ell(s),y)\bigr)\partial_\gamma\Phi\big(\Ztil(s,y)\big)\dd y\dd\beta(s)\right\|_{L^2(\Omega)}^2, \\
        E_4(t,x) &= \left\|\int_0^t\int_\mathcal{D}G_d(t-\ell(s),x,y)\difstar\bigl(\Util(\ell(s),y)\bigr)^2\psi\big(\Ztil(s,y)\big)\dd y\dd s\right. \\
        &\quad \left. - \int_0^{t_m}\int_\mathcal{D}G_d(t_m-\ell(s),x,y)\difstar\bigl(\Util(\ell(s),y)\bigr)^2\psi\big(\Ztil(s,y)\big)\dd y\dd s\right\|_{L^2(\Omega)}^2.
    \end{align*}
    For the first term $E_1(t,x)$, recalling that $u_0(y)\in[-1,1]$ for all $y\in\overline{\mathcal{D}}$ owing to Assumption~\ref{ass-ic}, and applying the temporal regularity property of the heat kernel from Lemma~\ref{la-propHK-reg} with $\alpha=\frac12-\varepsilon$, one obtains for all $t\in(t_m,t_{m+1})$
    \begin{align*}
        \underset{x\in\overline{\mathcal{D}}}\sup~E_1(t,x) &=\underset{x\in\overline{\mathcal{D}}}\sup~ \abs{\int_\mathcal{D}\left(G_d(t,x,y) - G_d(t_m,x,y)\right)u_0(y)\dd y}^2 \\
        &\leq \underset{x\in\overline{\mathcal{D}}}\sup~\left(\int_\mathcal{D}\abs{G_d(t,x,y)-G_d(t_m,x,y)}\cdot\abs{u_0(y)}\dd y\right)^2 \\
        &\leq \underset{x\in\overline{\mathcal{D}}}\sup~\left(\int_\mathcal{D}\abs{G_d(t,x,y)-G_d(t_m,x,y)}\dd y\right)^2 \\
        &\leq C\frac{\abs{t-t_m}^{1-2\varepsilon}}{\abs{t_m}^{1-2\varepsilon}}.
    \end{align*}
	For the other terms $E_j(t,x)$ with $j=1,2,3$, one has the upper bounds
	\[
	E_j(t,x) \leq 2\bigl(E_{j,1}(t,x)+E_{j,2}(t,x)\bigr),
	\]
	where $E_{2,1}(t,x)$ and $E_{2,2}(t,x)$ are defined as
    \begin{align*}
        E_{2,1}(t,x) &= \mse{\int_{t_m}^t\int_\mathcal{D}G_d(t-\ell(s),x,y)\drfstar\bigl(\Util(\ell(s),y)\bigr)\partial_\gamma\Phi\big(\Ztil(s,y)\big)\dd y\dd s}^2, \\
        E_{2,2}(t,x) &= \mse{\int_0^{t_m}\int_\mathcal{D}\left(G_d(t-\ell(s),x,y) - G_d(t_m-\ell(s),x,y)\right)\drfstar\bigl(\Util(\ell(s),y)\bigr)\partial_\gamma\Phi\big(\Ztil(s,y)\big)\dd y\dd s}^2,
    \end{align*}
    $E_{3,1}(t,x)$ and $E_{3,2}(t,x)$ are defined as
    \begin{align*}
        E_{3,1}(t,x) &= \mse{\int_{t_m}^t\int_\mathcal{D}G_d(t-\ell(s),x,y)\difstar\bigl(\Util(\ell(s),y)\bigr)\partial_\gamma\Phi\big(\Ztil(s,y)\big)\dd y\dd\beta(s)}^2, \\
        E_{3,2}(t,x) &= \mse{\int_0^{t_m}\int_\mathcal{D}\left(G_d(t-\ell(s),x,y) - G_d(t_m-\ell(s),x,y)\right)\difstar\bigl(\Util(\ell(s),y)\bigr)\partial_\gamma\Phi\big(\Ztil(s,y)\big)\dd y\dd\beta(s)}^2,
    \end{align*}
    and $E_{4,1}(t,x)$ and $E_{4,2}(t,x)$ are defined as
    \begin{align*}
        E_{4,1}(t,x) &= \mse{\int_{t_m}^t\int_\mathcal{D}G_d(t-\ell(s),x,y)\difstar\bigl(\Util(\ell(s),y)\bigr)^2\psi\big(\Ztil(s,y)\big)\dd y\dd s}^2, \\
        E_{4,2}(t,x) &= \mse{\int_0^{t_m}\int_\mathcal{D}\left(G_d(t-\ell(s),x,y)-G_d(t_m-\ell(s),x,y)\right)\difstar\bigl(\Util(\ell(s),y)\bigr)^2\psi\big(\Ztil(s,y)\big)\dd y\dd s}^2.
    \end{align*}
    Applying the Cauchy--Schwarz inequality to the terms $E_{2,1}(t,x)$ and $E_{4,1}(t,x)$, and the It\^o isometry property to the term $E_{3,1}(t,x)$, one has
    \begin{align*}
        &E_{2,1}(t,x)+E_{3,1}(t,x)+E_{4,1}(t,x) \\
        &\quad \leq C(T)\int_{t_m}^t\mse{\int_\mathcal{D}G_d(t-\ell(s),x,y)\drfstar\bigl(\Util(\ell(s),y)\bigr)\partial_\gamma\Phi\big(\Ztil(s,y)\big)\dd y}^2\dd s \\
        &\qquad + \int_{t_m}^t\mse{\int_\mathcal{D}G_d(t-\ell(s),x,y)\difstar\bigl(\Util(\ell(s),y)\bigr)\partial_\gamma\Phi\big(\Ztil(s,y)\big)\dd y}^2\dd s \\
        &\qquad + C(T)\int_{t_m}^{t}\mse{\int_\mathcal{D}G_d(t-\ell(s),x,y)\difstar\bigl(\Util(\ell(s),y)\bigr)^2\psi\big(\Ztil(s,y)\big)\dd y}^2\dd s.
    \end{align*}
    Next, applying the Minkowski inequality and recalling that the heat kernel $G_d$ takes nonnegative values (see Lemma~\ref{la-propHK-bdd}), one obtains
    \begin{align*}
        &E_{2,1}(t,x)+E_{3,1}(t,x)+E_{4,1}(t,x) \\
        &\quad \leq C(T)\int_{t_m}^t\left(\int_\mathcal{D}G_d(t-\ell(s),x,y)\mse{\drfstar\bigl(\Util(\ell(s),y)\bigr)\partial_\gamma\Phi\big(\Ztil(s,y)\big)}\dd y\right)^2\dd s \\
        &\qquad + \int_{t_m}^t\left(\int_\mathcal{D}G_d(t-\ell(s),x,y)\mse{\difstar\bigl(\Util(\ell(s),y)\bigr)\partial_\gamma\Phi\big(\Ztil(s,y)\big)}\dd y\right)^2\dd s \\
        &\qquad + C(T)\int_{t_m}^t\left(\int_\mathcal{D}G_d(t-\ell(s),x,y)\mse{\difstar\bigl(\Util(\ell(s),y)\bigr)^2\psi\big(\Ztil(s,y)\big)}\dd y\right)^2\dd s.
    \end{align*}
    Recall that the mappings $\drfstar$ and $\difstar$ are bounded on $[0,T]\times\overline{\mathcal{D}}\times\R$, see equation~\eqref{eq-boundafstar}. Applying the moment bounds~\eqref{eq-derPhi} on the derivatives of $\Phi(\Ztil)$ from Proposition~\ref{prop-Ztil}, one thus obtains the upper bounds
    \begin{align*}
        E_{2,1}(t,x)+E_{3,1}(t,x)+E_{4,1}(t,x) &\leq C(T)\int_{t_m}^t\left(\int_\mathcal{D}G_d(t-\ell(s),x,y)\mse{\partial_\gamma\Phi\big(\Ztil(s,y)\big)}\dd y\right)^2\dd s \\
        &\quad + C(T)\int_{t_m}^t\left(\int_\mathcal{D}G_d(t-\ell(s),x,y)\mse{\psi\big(\Ztil(s,y)\big)}\dd y\right)^2\dd s\\
        &\le C(T)\int_{t_m}^t\left(\int_\mathcal{D}G_d(t-\ell(s),x,y)\dd y\right)^2\dd s.
    \end{align*}
    Finally, owing to the boundedness of the integral of the heat kernel from Lemma~\ref{la-propHK-bdd}, one obtains the following inequality: there exists $C(T)\in(0,\infty)$ such that for all $m\in\{1,\ldots,M-1\}$, $t\in(t_m,t_{m+1})$ and $x\in\overline{\mathcal{D}}$, one has
    \begin{align*}
        E_{2,1}(t,x)+E_{3,1}(t,x)+E_{4,1}(t,x)&\leq C(T)\abs{t-t_m}.
    \end{align*}
    The treatment of the terms $E_{j,1}(t,x)$ is thus completed.
    
    Applying the Cauchy--Schwarz inequality to the terms $E_{2,2}(t,x)$ and $E_{4,2}(t,x)$, and the It\^o isometry property to the term $E_{3,2}(t,x)$, one has
    \begin{align*}
        &E_{2,2}(t,x)+E_{3,2}(t,x)+E_{4,2}(t,x) \\
        &\quad \leq C(T)\int_0^{t_m}\mse{\int_\mathcal{D}\left(G_d(t-\ell(s),x,y) - G_d(t_m-\ell(s),x,y)\right)\drfstar\bigl(\Util(\ell(s),y)\bigr)\partial_\gamma\Phi\big(\Ztil(s,y)\big)\dd y}^2\dd s \\
        &\qquad + \int_0^{t_m}\mse{\int_\mathcal{D}\left(G_d(t-\ell(s),x,y) - G_d(t_m-\ell(s),x,y)\right)\difstar\bigl(\Util(\ell(s),y)\bigr)\partial_\gamma\Phi\big(\Ztil(s,y)\big)\dd y}^2\dd s \\
        &\qquad + C(T)\int_0^{t_m}\mse{\int_\mathcal{D}\left(G_d(t-\ell(s),x,y)-G_d(t_m-\ell(s),x,y)\right)\difstar\bigl(\Util(\ell(s),y)\bigr)^2\psi\big(\Ztil(s,y)\big)\dd y}^2\dd s.
    \end{align*}
	Next, applying the Minkowski inequality, one obtains
    \begin{align*}
        &E_{2,2}(t,x)+E_{3,2}(t,x)+E_{4,2}(t,x) \\
        &\quad \leq C(T)\int_0^{t_m}\left(\int_\mathcal{D}\abs{G_d(t-\ell(s),x,y)-G_d(t_m-\ell(s),x,y)}\mse{\drfstar\bigl(\Util(\ell(s),y)\bigr)\partial_\gamma\Phi\big(\Ztil(s,y)\big)}\dd y\right)^2\dd s \\
        &\qquad + \int_0^{t_m}\left(\int_\mathcal{D}\abs{G_d(t-\ell(s),x,y)-G_d(t_m-\ell(s),x,y)}\mse{\difstar\bigl(\Util(\ell(s),y)\bigr)\partial_\gamma\Phi\big(\Ztil(s,y)\big)}\dd y\right)^2\dd s \\ 
        &\qquad + C(T)\int_0^{t_m}\left(\int_\mathcal{D}\abs{G_d(t-\ell(s),x,y)-G_d(t_m-\ell(s),x,y)}\mse{\difstar\bigl(\Util(\ell(s),y)\bigr)^2\psi\big(\Ztil(s,y)\big)}\dd y\right)^2\dd s.
    \end{align*}
    Recall that the mappings $\drfstar$ and $\difstar$ are bounded on $[0,T]\times\overline{\mathcal{D}}\times\R$, see equation~\eqref{eq-boundafstar}. Applying the moment bounds~\eqref{eq-derPhi} on the derivatives of $\Phi(\Ztil)$ from Proposition~\ref{prop-Ztil}, one thus obtains the upper bounds
    \begin{align*}
        &E_{2,2}(t,x)+E_{3,2}(t,x)+E_{4,2}(t,x) \\
        &\quad \leq C(T)\int_0^{t_m}\left(\int_\mathcal{D}\abs{G_d(t-\ell(s),x,y)-G_d(t_m-\ell(s),x,y)}\mse{\partial_\gamma\Phi\big(\Ztil(s,y)\big)}\dd y\right)^2\dd s \\
        &\qquad + C(T)\int_0^{t_m}\left(\int_\mathcal{D}\abs{G_d(t-\ell(s),x,y)-G_d(t_m-\ell(s),x,y)}\mse{\psi\big(\Ztil(s,y)\big)}\dd y\right)^2\dd s \\
        &\quad \leq C(T)\int_0^{t_m}\left(\int_\mathcal{D}\abs{G_d(t-\ell(s),x,y)-G_d(t_m-\ell(s),x,y)}\dd y\right)^2\dd s.
    \end{align*}
	Finally, applying the heat kernel regularity property from Lemma~\ref{la-propHK-reg} with $\alpha=\frac12-\varepsilon\in(0,\frac12)$ and recalling that for all $s\in[0,T]$ one has $\ell(s)\leq s$, one obtains the following inequalities: there exists $C_\varepsilon(T)\in(0,\infty)$ such that for all $m\in\{1,\ldots,M-1\}$, $t\in(t_m,t_{m+1})$ and $x\in\overline{\mathcal{D}}$, one has
    \begin{align*}
        E_{2,2}(t,x)+E_{3,2}(t,x)+E_{4,2}(t,x) &\leq C(T)\abs{t-t_m}^{1-2\varepsilon}\int_0^{t_m}\abs{t_m-\ell(s)}^{-1+2\varepsilon}\dd s \\
        &\leq C(T)\abs{t-t_m}^{1-2\varepsilon}\int_0^{t_m}\abs{t_m-s}^{-1+2\varepsilon}\dd s \\
        &\leq C_\varepsilon(T)\abs{t-t_m}^{1-2\varepsilon}.
    \end{align*}
    
    Gathering the upper bounds for the terms $E_{1}(t,x)$ and $E_{j_1,j_2}(t,x)$ with $j_1=1,2,3$ and $j_2=1,2$, one obtains the following inequality:  there exists $C_\varepsilon(T)\in(0,\infty)$ such that for all $m\in\{1,\ldots,M-1\}$ and $t\in(t_m,t_{m+1})$, one has
    \begin{equation*}
        \underset{x\in\overline{\mathcal{D}}}\sup~\mse{\util(t,x) - \util(t_m,x)}^2\leq C_\varepsilon(T)\frac{\abs{t-t_m}^{1-2\varepsilon}}{t_m^{1-2\varepsilon}}.
    \end{equation*}
    The proof of the inequality~\eqref{eq-temputil} is thus completed.
    
    This concludes the proof of Proposition~\ref{prop-utilprops}.
\end{proof}

\subsection{Proof of Theorem~\ref{thm-mainthm}}\label{sec-mainproof}

Let the value of the final time $T\in(0,\infty)$ and of the arbitrary auxiliary parameter $\varepsilon\in(0,\frac12)$ be fixed.

Recall that $\bigl(\ustar(t,x)\bigr)_{t\in[0,T],x\in\overline{\mathcal{D}}}$ denotes the mild solution to~\eqref{eq-SPDEstar} and that $\bigl(\util(t,x)\bigr)_{t\in[0,T],x\in\overline{\mathcal{D}}}$ is defined by equation~\eqref{eq-auxproc}, and is related to the DPLT splitting scheme~\eqref{eq-dpltstar} by the almost sure equality~\eqref{utilexpr1} from Proposition~\ref{prop-utilexpr}. As a result, one has
\begin{align*}
\sup_{0\leq m\leq M}\sup_{x\in\overline{\mathcal{D}}} \mse{u^\star_m(x)-\ustar(t_m,x)}
&=\sup_{0\leq m\leq M}\sup_{x\in\overline{\mathcal{D}}} \mse{\util(t_m,x)-\ustar(t_m,x)}\\
&\le \sup_{t\in[0,T]}\sup_{x\in\overline{\mathcal{D}}} \mse{\util(t,x)-\ustar(t,x)},
\end{align*}
and below we thus prove the mean-square error estimates~\eqref{convTimeCont} stated below Theorem~\ref{thm-mainthm}.

For all $(t,x)\in[0,T]\times\overline{\mathcal{D}}$, define
\[
e(t,x)=\ustar(t,x)-\util(t,x).
\]
Note that by the definitions of $\ustar$ and $\util$ at time $t=0$ one has
\begin{equation*}
\ustar(0,\cdot)=\util(0,\cdot).
\end{equation*}
Furthermore, owing to Assumption~\ref{ass-Phi-Psi} satisfied by the integrator $\Phi$, one has
\[
\psi\big(\Ztil(\ell(t),x)\big) = \psi\bigl(0,0,\util(\ell(t,x)) \bigr)=0,\quad \forall~t\in[0,T],~x\in\overline{\mathcal{D}}.
\]
Finally, for all $s\in[0,T]$ and $y\in\overline{\mathcal{D}}$, set
\begin{equation}
\Ustar(s,y)=\bigl(s,y,\ustar(s,y)\bigr),
\end{equation}
and recall that $\Util(s,y)$ is defined by~\eqref{Utilbis}.

{\bf Decomposition of the error.}

For all $t\in[0,T]$ and $x\in\overline{\mathcal{D}}$, the error $e(t,x)$ is decomposed as
\begin{equation}\label{onemoreerror}
    \ustar(t,x) - \util(t,x) = e_1(t,x) + e_2(t,x),
\end{equation}
where the error terms $e_1(t,x)$ and $e_2(t,x)$ are defined as
\begin{align*}
    &e_1(t,x) = \int_0^t\int_\mathcal{D}\left[G_d(t-s,x,y)\drstar(\Ustar(s,y)) - G_d\left(t-\ell(s),x,y\right)\drfstar\bigl(\Util(\ell(s),y)\bigr)\partial_\gamma\Phi\big(\Ztil(s,y)\big)\right]\dd y\dd s \\
    &\quad + \int_0^t\int_\mathcal{D}\left[G_d(t-s,x,y)\distar(\Ustar(s,y)) - G_d\left(t-\ell(s),x,y\right)\difstar\bigl(\Util(\ell(s),y)\bigr)\partial_\gamma\Phi\big(\Ztil(s,y)\big)\right]\dd y\dd\beta(s), \\
    &e_2(t,x) = \int_0^t\int_\mathcal{D}G_d\left(t-\ell(s),x,y\right)\difstar\bigl(\Util(\ell(s),y)\bigr)^2\left[\psi\big(\Ztil(s,y)\big) - \psi\big(\Ztil\left(\ell(s),y\right)\big)\right]\dd y\dd s.
\end{align*}
Owing to Assumption~\ref{ass-Phi-dg} satisfied by the integrator $\Phi$, one has $\partial_\gamma\Phi(0,0,v) = \sigma(v)$ for $v\in[-1,1]$. Therefore the first error term $e_1(t,x)$ is further decomposed as
\begin{equation}\label{onemoreerror2}
    e_1(t,x) = e_{1,1}(t,x) + e_{1,2}(t,x) + e_{1,3}(t,x) + e_{1,4}(t,x) + e_{1,5}(t,x),
\end{equation}
where the auxiliary error terms $e_{1,j}(t,x)$ for $j=1,\ldots,5$ are defined as
\begin{align*}
    e_{1,1}(t,x) &= \int_0^t\int_\mathcal{D}G_d(t-s,x,y)\left[\drstar\left(s,y,\ustar(s,y)\right) - \drstar\left(s,y,\util(s,y)\right)\right]\dd y\dd s \\
    &\quad + \int_0^t\int_\mathcal{D}G_d(t-s,x,y)\left[\distar\left(s,y,\ustar(s,y)\right) - \distar\left(s,y,\util(s,y)\right)\right]\dd y\dd\beta(s), \\
    e_{1,2}(t,x) &= \int_0^t\int_\mathcal{D}G_d(t-s,x,y)\left[\drstar\left(s,y,\util(s,y)\right) - \drstar\left(\ell(s),y,\util(s,y)\right)\right]\dd y\dd s \\
    &\quad + \int_0^t\int_\mathcal{D}G_d(t-s,x,y)\left[\distar\left(s,y,\util(s,y)\right) - \distar\left(\ell(s),y,\util(s,y)\right)\right]\dd y\dd\beta(s), \\
    e_{1,3}(t,x) &= \int_0^t\int_\mathcal{D}G_d(t-s,x,y)\left[\drstar\left(\ell(s),y,\util(s,y)\right) - \drstar\left(\ell(s),y,\util\left(\ell(s),y\right)\right)\right]\dd y\dd s \\
    &\quad + \int_0^t\int_\mathcal{D}G_d(t-s,x,y)\left[\distar\left(\ell(s),y,\util(s,y)\right) - \distar\left(\ell(s),y,\util\left(\ell(s),y\right)\right)\right]\dd y\dd\beta(s), \\
    e_{1,4}(t,x) &= \int_0^t\int_\mathcal{D}\left[G_d(t-s,x,y) - G_d\left(t-\ell(s),x,y\right)\right]\drstar\left(\ell(s),y,\util\left(\ell(s),y\right)\right)\dd y\dd s \\
    &\quad + \int_0^t\int_\mathcal{D}\left[G_d(t-s,x,y) - G_d\left(t-\ell(s),x,y\right)\right]\distar\left(\ell(s),y,\util\left(\ell(s),y\right)\right)\dd y\dd\beta(s), \\
    e_{1,5}(t,x) &= \int_0^t\int_\mathcal{D}G_d\left(t-\ell(s),x,y\right)\drfstar\bigl(\Util(\ell(s),y)\bigr)\left[\partial_\gamma\Phi\big(\Ztil(\ell(s),y)\big) - \partial_\gamma\Phi\big(\Ztil\left(s,y\right)\big)\right]\dd y\dd s \\
    &\quad + \int_0^t\int_\mathcal{D}G_d\left(t-\ell(s),x,y\right)\difstar\bigl(\Util(\ell(s),y)\bigr)\left[\partial_\gamma\Phi\big(\Ztil(\ell(s),y)\big) - \partial_\gamma\Phi\big(\Ztil\left(s,y\right)\big)\right]\dd y\dd\beta(s).
\end{align*}

{\bf Treatment of the error term $e_{1,1}(t,x)$.}

Applying the Cauchy--Schwarz inequality, the It{\^o} isometry property, and the Minkowski inequality, and recalling that the heat kernel $G_d$ takes nonnegative values (see Lemma~\ref{la-propHK-bdd}), for the error term $e_{1,1}(t,x)$ one has
\begin{align*}
    \mse{e_{1,1}(t,x)}^2 &\leq C(T) \int_0^t\mse{\int_\mathcal{D}G_d(t-s,x,y)\left[\drstar\left(s,y,\ustar(s,y)\right) - \drstar\left(s,y,\util(s,y)\right)\right]\dd y}^2\dd s \\
    &\quad + \int_0^t\mse{\int_\mathcal{D}G_d(t-s,x,y)\left[\distar\left(s,y,\ustar(s,y)\right) - \distar\left(s,y,\util(s,y)\right)\right]\dd y}^2\dd s \\
    &\leq C(T)\int_0^t\left(\int_\mathcal{D}G_d(t-s,x,y)\mse{\drstar\left(s,y,\ustar(s,y)\right) - \drstar\left(s,y,\util(s,y)\right)}\dd y\right)^2\dd s \\
    &\quad + \int_0^t\left(\int_\mathcal{D}G_d(t-s,x,y)\mse{\distar\left(s,y,\ustar(s,y)\right) - \distar\left(s,y,\util(s,y)\right)}\dd y\right)^2\dd s.
\end{align*}
The mappings $\drstar$ and $\distar$ satisfy the Lipschitz continuity property~\eqref{eq-Lipstar} with respect to the third variable $u\in\R$. Owing to the boundedness of the integral of the heat kernel from Lemma~\ref{la-propHK-bdd}, one thus obtains the following upper bounds: there exists $C(T)\in(0,\infty)$ such that for all $t\in[0,T]$ and $x\in\overline{\mathcal{D}}$ one has
\begin{align*}
    \mse{e_{1,1}(t,x)}^2 &\leq C(T)\int_0^t\left(\int_\mathcal{D}G_d(t-s,x,y)\mse{\ustar(s,y) - \util(s,y)}\dd y\right)^2\dd s \\
    &\leq C(T)\int_0^t\sup_{z\in\overline{\mathcal{D}}}\mse{\ustar(s,z) - \util(s,z)}^2\left(\int_\mathcal{D}G_d(t-s,x,y)\dd y\right)^2\dd s \\
    &\leq C(T)\int_0^t\sup_{z\in\overline{\mathcal{D}}}\mse{\ustar(s,z) - \util(s,z)}^2\dd s.
\end{align*}

{\bf Treatment of the error term $e_{1,2}(t,x)$.}

Applying the Cauchy--Schwarz inequality, the It{\^o} isometry property, and the Minkowski inequality, for the error term $e_{1,2}(t,x)$ one has
\begin{align*}
    &\mse{e_{1,2}(t,x)}^2 \\
    &\quad \leq C(T)\int_0^t\mse{\int_\mathcal{D}G_d(t-s,x,y)\left[\drstar\left(s,y,\util(s,y)\right) - \drstar\left(\ell(s),y,\util(s,y)\right)\right]\dd y}^2\dd s \\
    &\qquad + \int_0^t\mse{\int_\mathcal{D}G_d(t-s,x,y)\left[\distar\left(s,y,\util(s,y)\right) - \distar\left(\ell(s),y,\util(s,y)\right)\right]\dd y}^2\dd s \\
    &\quad \leq C(T)\int_0^t\left(\int_\mathcal{D}G_d(t-s,x,y)\mse{\drstar\left(s,y,\util(s,y)\right) - \drstar\left(\ell(s),y,\util(s,y)\right)}\dd y\right)^2\dd s \\
    &\qquad + \int_0^t\left(\int_\mathcal{D}G_d(t-s,x,y)\mse{\distar\left(s,y,\util(s,y)\right) - \distar\left(\ell(s),y,\util(s,y)\right)}\dd y\right)^2\dd s.
\end{align*}
The mappings $\drstar$ and $\distar$ satisfy the H\"older continuity property given by Remark~\ref{rmk-driftdiffstar-regt} with respect to the first variable $t\in[0,T]$. Owing to the boundedness of the integral of the heat kernel from Lemma~\ref{la-propHK-bdd}, one thus obtains the following upper bounds: there exists $C(T)\in(0,\infty)$ such that for all $t\in[0,T]$ and $x\in\overline{\mathcal{D}}$ one has
\begin{align*}
    \mse{e_{1,2}(t,x)}^2 &\leq C(T)\int_0^t\left(\int_\mathcal{D}G_d(t-s,x,y)\abs{s-\ell(s)}^{\frac{1}{2}}\dd y\right)^2\dd s \\
    &\leq C(T)\dt\int_0^t\left(\int_\mathcal{D}G_d(t-s,x,y)\dd y\right)^2\dd s \\
    &\leq C(T)\dt.
\end{align*}

{\bf Treatment of the error term $e_{1,3}(t,x)$.}

Applying the Cauchy--Schwarz inequality, the It{\^o} isometry property, and the Minkowski inequality, for the error term $e_{1,3}(t,x)$ one has
\begin{align*}
	&\mse{e_{1,3}(t,x)}^2 \\
    &\quad\leq C(T)\int_0^t\mse{\int_\mathcal{D}G_d(t-s,x,y)\left[\drstar\left(\ell(s),y,\util(s,y)\right) - \drstar\left(\ell(s),y,\util\left(\ell(s),y\right)\right)\right]\dd y}^2\dd s \\
    &\qquad + \int_0^t\mse{\int_\mathcal{D}G_d(t-s,x,y)\left[\distar\left(\ell(s),y,\util(s,y)\right) - \distar\left(\ell(s),y,\util\left(\ell(s),y\right)\right)\right]\dd y}^2\dd s \\
    &\quad\leq C(T)\int_0^t\left(\int_\mathcal{D}G_d(t-s,x,y)\mse{\drstar\left(\ell(s),y,\util(s,y)\right) - \drstar\left(\ell(s),y,\util\left(\ell(s),y\right)\right)}\dd y\right)^2\dd s \\
    &\qquad + \int_0^t\left(\int_\mathcal{D}G_d(t-s,x,y)\mse{\distar\left(\ell(s),y,\util(s,y)\right) - \distar\left(\ell(s),y,\util\left(\ell(s),y\right)\right)}\dd y\right)^2\dd s.
\end{align*}
The mappings $\drstar$ and $\distar$ satisfy the Lipschitz continuity property~\eqref{eq-Lipstar} with respect to the third variable $u\in\R$. Owing to the boundedness of the integral of the heat kernel from Lemma~\ref{la-propHK-bdd}, one obtains
\begin{align*}
    \mse{e_{1,3}(t,x)}^2 &\leq C(T)\int_0^t\left(\int_\mathcal{D}G_d(t-s,x,y)\mse{\util(s,y) - \util\left(\ell(s),y\right)}\dd y\right)^2\dd s \\
    &\leq C(T)\int_0^t\sup_{z\in\overline{\mathcal{D}}}\mse{\util(s,z) - \util\left(\ell(s),z\right)}^2\left(\int_\mathcal{D}G_d(t-s,x,y)\dd y\right)^2\dd s \\
    &\leq C(T)\int_0^t\sup_{z\in\overline{\mathcal{D}}}\mse{\util(s,z) - \util\left(\ell(s),z\right)}^2\dd s.
\end{align*}
To obtain an upper bound on the right-hand side of the inequality above, one would like to apply the inequality~\eqref{eq-temputil} from Proposition~\ref{prop-utilprops}, however it is valid only if $\ell(s)\ge \dt$. To deal with this issue, one writes
\begin{align*}
\mse{e_{1,3}(t,x)}^2&\le \int_0^{t\wedge\dt}\sup_{z\in\overline{\mathcal{D}}}\mse{\util(s,z) - \util\left(\ell(s),z\right)}^2\dd s\\
&\quad +\mathds{1}_{t>\dt}\int_\dt^t\sup_{z\in\overline{\mathcal{D}}}\mse{\util(s,z) - \util\left(\ell(s),z\right)}^2\dd s.
\end{align*}
On the one hand, recall that the auxiliary process $\util$ takes values in the domain $[-1,1]$ almost surely, see the property~\eqref{eq-dputil} from Proposition~\ref{prop-utilprops}. As a result, applying the Minkowski inequality, one obtains
\begin{align*}
    \int_0^{t\wedge\dt}\sup_{z\in\overline{\mathcal{D}}}\mse{\util(s,z) - \util\left(\ell(s),z\right)}^2\dd s &\leq \int_0^{t\wedge\dt}\sup_{z\in\overline{\mathcal{D}}}\left(\mse{\util(s,z)} + \mse{\util(\ell(s),z)}\right)^2\dd s \\
    &\leq 2(t\wedge\dt) \\
    &\leq 2\dt.
\end{align*}
On the other hand, assume that $t\in[\dt,T]$. Applying the temporal regularity property~\eqref{eq-temputil} from Proposition~\ref{prop-utilprops} for the auxiliary process $\util$, and the lower bound $\ell(s)\geq s-\dt$, one obtains
\begin{align*}
    \int_\dt^t\sup_{z\in\overline{\mathcal{D}}}\mse{\util(s,z) - \util\left(\ell(s),z\right)}^2\dd s &\leq C_\varepsilon(T)\int_\dt^t\frac{\abs{s-\ell(s)}^{1-2\varepsilon}}{\abs{\ell(s)}^{1-2\varepsilon}} \dd s \\
    &\leq C_\varepsilon(T)\dt^{1-2\varepsilon}\int_\dt^t\abs{\ell(s)}^{-1+2\varepsilon}\dd s \\
    &\leq C_\varepsilon(T)\dt^{1-2\varepsilon}\int_\dt^t\abs{s-\dt}^{-1+2\varepsilon}\dd s \\
    &\leq C_\varepsilon(T)\dt^{1-2\varepsilon}.
\end{align*}
Therefore for the error term $e_{1,3}(t,x)$ one obtains the following upper bound: for all $\varepsilon\in(0,1/2)$, there exists $C_\varepsilon(T)\in(0,\infty)$ such that for all $t\in[0,T]$ and $x\in\overline{\mathcal{D}}$ one has
\begin{equation*}
    \mse{e_{1,3}(t,x)}^2 \leq C_\varepsilon(T)\dt^{1-2\varepsilon}.
\end{equation*}

{\bf Treatment of the error term $e_{1,4}(t,x)$.}

Recall the notation~\eqref{Utilbis} for $\Util(\ell(s),y)$. Applying the Cauchy--Schwarz inequality, the It{\^o} isometry property, and the Minkowski inequality, one has
\begin{align*}
    \mse{e_{1,4}(t,x)}^2 &\leq C(T)\int_0^t\mse{\int_\mathcal{D}\left[G_d(t-s,x,y) - G_d\left(t-\ell(s),x,y\right)\right]\drstar\bigl(\Util(\ell(s),y)\bigr)\dd y}^2\dd s \\
    &\quad + \int_0^t\mse{\int_\mathcal{D}\left[G_d(t-s,x,y) - G_d\left(t-\ell(s),x,y\right)\right]\distar\bigl(\Util(\ell(s),y)\bigr)\dd y}^2\dd s \\
    &\leq C(T)\int_0^t\left(\int_\mathcal{D}\abs{G_d(t-s,x,y) - G_d\left(t-\ell(s),x,y\right)}\mse{\drstar\bigl(\Util(\ell(s),y)\bigr)}\dd y\right)^2\dd s \\
    &\quad + \int_0^t\left(\int_\mathcal{D}\abs{G_d(t-s,x,y) - G_d\left(t-\ell(s),x,y\right)}\mse{\distar\bigl(\Util(\ell(s),y)\bigr)}\dd y\right)^2\dd s.
\end{align*}
Note that the mappings $\drstar$ and $\distar$ defined by~\eqref{eq-starcoeff} are bounded on $[0,T]\times\overline{\mathcal{D}}\times\R$, which can be seen by treating the cases $u\in[-1,1]$ and $u\notin[-1,1]$ separately.
Applying the temporal regularity property of the heat kernel from Lemma~\ref{la-propHK-reg} with $\alpha=\frac12-\varepsilon$, one obtains the following upper bounds: for all $\varepsilon\in(0,1/2)$, there exists $C_\varepsilon(T)\in(0,\infty)$ such that for all $t\in[0,T]$ and $x\in\overline{\mathcal{D}}$ one has
\begin{align*}
    \mse{e_{1,4}(t,x)}^2 &\leq C(T)\int_0^t\left(\int_\mathcal{D}\abs{G_d(t-s,x,y) - G_d\left(t-\ell(s),x,y\right)}\dd y\right)^2\dd s \\
    &\leq C(T)\int_0^t\frac{\abs{s-\ell(s)}^{1-2\varepsilon}}{\abs{t-s}^{1-2\varepsilon}}\dd s \\
    &\leq C(T)\dt^{1-2\varepsilon}\int_0^t\abs{t-s}^{-1+2\varepsilon}\dd s \\
    &\leq C_\varepsilon(T)\dt^{1-2\varepsilon}.
\end{align*}

{\bf Treatment of the error terms $e_{1,5}(t,x)$ and $e_2(t,x)$.}

The error terms $e_{1,5}(t)$ and $e_2(t,x)$ are treated together. Recall the notation~\eqref{Utilbis} for $\Util(\ell(s),y)$. Applying the Cauchy--Schwarz inequality, the It{\^o} isometry property and the Minkowski inequality, one has
\begin{align*}
    &\mse{e_{1,5}(t,x)}^2+\mse{e_2(t,x)}^2 \\
    &\quad \leq C(T)\int_0^t\mse{\int_\mathcal{D}G_d\left(t-\ell(s),x,y\right)\drfstar\bigl(\Util(\ell(s),y)\bigr)\left[\partial_\gamma\Phi\big(\Ztil(\ell(s),y)\big) - \partial_\gamma\Phi\big(\Ztil\left(s,y\right)\big)\right]\dd y}^2\dd s \\
    &\qquad + \int_0^t\mse{\int_\mathcal{D}G_d\left(t-\ell(s),x,y\right)\difstar\bigl(\Util(\ell(s),y)\bigr)\left[\partial_\gamma\Phi\big(\Ztil(\ell(s),y)\big) - \partial_\gamma\Phi\big(\Ztil\left(s,y\right)\big)\right]\dd y}^2\dd s \\
    &\qquad + C(T)\int_0^t\mse{\int_\mathcal{D}G_d\left(t-\ell(s),x,y\right)\difstar\bigl(\Util(\ell(s),y)\bigr)^2\left[\psi\big(\Ztil(s,y)\big) - \psi\big(\Ztil\left(\ell(s),y\right)\big)\right]\dd y}^2\dd s \\
    &\quad \leq C(T)\int_0^t\left(\int_\mathcal{D}G_d\left(t-\ell(s),x,y\right)\mse{\drfstar\bigl(\Util(\ell(s),y)\bigr)\left[\partial_\gamma\Phi\big(\Ztil(\ell(s),y)\big) - \partial_\gamma\Phi\big(\Ztil\left(s,y\right)\big)\right]}\dd y\right)^2\dd s \\
    &\qquad + \int_0^t\left(\int_\mathcal{D}G_d\left(t-\ell(s),x,y\right)\mse{\difstar\bigl(\Util(\ell(s),y)\bigr))\left[\partial_\gamma\Phi\big(\Ztil(\ell(s),y)\big) - \partial_\gamma\Phi\big(\Ztil\left(s,y\right)\big)\right]}\dd y\right)^2\dd s \\
    &\qquad + C(T)\int_0^t\left(\int_\mathcal{D}G_d\left(t-\ell(s),x,y\right)\mse{\difstar\bigl(\Util(\ell(s),y)\bigr)^2\left[\psi\big(\Ztil(s,y)\big) - \psi\big(\Ztil\left(\ell(s),y\right)\big)\right]}\dd y\right)^2\dd s.
\end{align*}
Owing to the boundedness of the mappings $\drfstar$ and $\difstar$ on $[0,T]\times\overline{\mathcal{D}}\times\R$ from equation~\eqref{eq-boundafstar} and the boundedness of the integral of the heat kernel from Lemma~\ref{la-propHK-bdd}, one has
\begin{align*}
    &\mse{e_{1,5}(t,x)}^2+\mse{e_2(t,x)}^2 \\
    &\quad \leq C(T)\int_0^t\left(\int_\mathcal{D}G_d\left(t-\ell(s),x,y\right)\mse{\partial_\gamma\Phi\big(\Ztil(\ell(s),y)\big) - \partial_\gamma\Phi\big(\Ztil\left(s,y\right)\big)}\dd y\right)^2\dd s \\
    &\qquad + C(T)\int_0^t\left(\int_\mathcal{D}G_d\left(t-\ell(s),x,y\right)\mse{\psi\big(\Ztil(s,y)\big) - \psi\big(\Ztil\left(\ell(s),y\right)\big)}\dd y\right)^2\dd s \\
    &\quad \leq C(T)\int_0^t\sup_{z\in\overline{\mathcal{D}}}\mse{\partial_\gamma\Phi\big(\Ztil(\ell(s),z)\big) - \partial_\gamma\Phi\big(\Ztil\left(s,z\right)\big)}^2\dd s \\
    &\qquad + C(T)\int_0^t\sup_{z\in\overline{\mathcal{D}}}\mse{\psi\big(\Ztil(s,z)\big) - \psi\big(\Ztil\left(\ell(s),z\right)\big)}^2\dd s.
\end{align*}
Applying the mean value theorem in $\R^3$, the Cauchy--Schwarz inequality, and the moment bounds~\eqref{eq-derPhi} on the derivatives of $\Phi(\Ztil)$ from Proposition~\ref{prop-Ztil}, one obtains
\begin{align*}
    &\mse{e_{1,5}(t,x)}^2+\mse{e_2(t,x)}^2 \\ 
    &\quad \leq C(T)\int_0^t\sup_{z\in\overline{\mathcal{D}}}\E\left[\sup_{c\in[0,1]}\norm{\nabla\partial_\gamma\Phi\big(c\Ztil(s,z) + (1-c)\Ztil(\ell(s),z)\big)}^2\norm{\Ztil(s,z) - \Ztil(\ell(s),z)}^2\right]\dd s \\
    &\qquad + C(T)\int_0^t\sup_{z\in\overline{\mathcal{D}}}\E\left[\sup_{c\in[0,1]}\norm{\nabla\psi\big(c\Ztil(s,z) + (1-c)\Ztil(\ell(s),z)\big)}^2\norm{\Ztil(s,z) - \Ztil(\ell(s),z)}^2\right]\dd s \\
    &\quad \leq C(T)\int_0^t\sup_{z\in\overline{\mathcal{D}}}\E\left[\sup_{c\in[0,1]}\norm{\nabla\partial_\gamma\Phi\big(c\Ztil(s,z) + (1-c)\Ztil(\ell(s),z)\big)}^4\right]^{\frac12}\E\left[\norm{\Ztil(s,z) - \Ztil(\ell(s),z)}^4\right]^{\frac12}\dd s \\
    &\qquad + C(T)\int_0^t\sup_{z\in\overline{\mathcal{D}}}\E\left[\sup_{c\in[0,1]}\norm{\nabla\psi\big(c\Ztil(s,z) + (1-c)\Ztil(\ell(s),z)\big)}^4\right]^{\frac12}\E\left[\norm{\Ztil(s,z) - \Ztil(\ell(s),z)}^4\right]^{\frac12}\dd s \\
    &\quad \leq C(T)\int_0^t\sup_{z\in\overline{\mathcal{D}}}\E\left[\norm{\Ztil(s,z) - \Ztil(\ell(s),z)}^4\right]^{\frac12}\dd s.
\end{align*}
Finally, applying the temporal regularity property~\eqref{eq-regZtil} from Proposition~\ref{prop-Ztil} for $\Ztil$, one obtains the following upper bound: there exists $C(T)\in(0,\infty)$ such that for all $t\in[0,T]$ and $x\in\overline{\mathcal{D}}$ one has
\begin{align*}
    \mse{e_{1,5}(t,x)}^2+\mse{e_2(t,x)}^2 &\leq C(T)\int_0^t\abs{s-\ell(s)}\dd s \leq C(T)\tau.
\end{align*}

{\bf Conclusion.}

Recalling the decompositions~\eqref{onemoreerror} of the error $e(t,x)=\ustar(t,x)-\util(t,x)$ and~\eqref{onemoreerror2} of the error term $e_{1}(t,x)$, gathering the upper bounds obtained above, one obtains the following inequality: for all $\varepsilon\in(0,1/2)$, there exists $C_\varepsilon(T)\in(0,\infty)$ such that for all $t\in[0,T]$ one has
\begin{equation*}
    \sup_{x\in\overline{\mathcal{D}}}\mse{\ustar(t,x)-\util(t,x)}^2 \leq C_\varepsilon(T)\left(\dt^{1-2\varepsilon} + \int_0^t\sup_{x\in\overline{\mathcal{D}}}\mse{\ustar(s,x)-\util(s,x)}^2\dd s\right).
\end{equation*}
Applying the Gr\"{o}nwall inequality, one thus obtains the following result: for all $\varepsilon\in(0,1/2)$, there exists $C_\varepsilon(T)\in(0,\infty)$ such that
\begin{equation}\label{eq-timecontresult}
    \underset{t\in[0,T]}\sup~\sup_{x\in\overline{\mathcal{D}}}\mse{\ustar(t,x)-\util(t,x)}^2 \leq C_\varepsilon(T)\dt^{1-2\varepsilon}.
\end{equation}
This concludes the proof of the inequality~\eqref{convTimeCont} for the error $e(t,x)=\ustar(t,x)-\util(t,x)$. As explained at the beginning of this proof, since $\util(t_m,\cdot)=u^\star_m$ at the time-grid points $t_m$, this implies the strong error estimates~\eqref{convDiscont}. This concludes the proof of Theorem~\ref{thm-mainthm}.

\subsection{Proof of Proposition~\ref{prop-exuniqdp}}\label{sec-proofexuniqdp}

As a consequence of Theorem~\ref{thm-mainthm}, more precisely of the stronger result~\eqref{convTimeCont}, we prove below that the mild solution $\ustar$ to the SPDE~\eqref{eq-SPDEstar} is a solution to the original SPDE~\eqref{eq-SPDE} in the sense of Definition~\ref{defi-mild}. This shows the existence of a mild solution to~\eqref{eq-SPDE}. The uniqueness follows from a straightforward argument, for instance noting that any mild solution to~\eqref{eq-SPDE} is a mild solution to the SPDE~\eqref{eq-SPDEstar}, which admits a unique solution.

Let $\varepsilon\in(0,1/2)$ be fixed. For all $n\in\N$, set $M_n=2^n$, and consider the time-step size $\dt_n=T/M_n=T/2^n$. Let $\util_n$ be the auxiliary process defined by~\eqref{eq-auxproc} with time-step size $\dt_n$. Owing to the strong error estimates~\eqref{convTimeCont}, for all $n\in\N$, one has
\[
\underset{t\in[0,T]}\sup~\sup_{x\in\overline{\mathcal{D}}}\mse{\ustar(t,x)-\util_n(t,x)} \leq C_\varepsilon(T)\rho^n,
\]
with $\rho=2^{-(\frac12-\varepsilon)}$. As a result, one has $\sum_{n\in\N}\rho^n<\infty$.

Let $t\in[0,T]$ and $x\in\overline{\mathcal{D}}$. Owing to the Markov inequality and the Borel--Cantelli lemma, one has the almost sure convergence
\[
\util_n(t,x)\underset{n\to\infty}\to \ustar(t,x).
\]
Due to the domain preservation property~\eqref{eq-dputil} from Proposition~\ref{prop-utilprops} for the auxiliary processes $\util_n$, one has $\util_n(t,x)\in[-1,1]$ almost surely, and due to the almost sure convergence above one has $\ustar(t,x)\in[-1,1]$ almost surely.

More precisely, one has proved that for all $t\in[0,T]$ and $x\in\overline{\mathcal{D}}$ one has
\[
\PP\left(\ustar(t,x)\in[-1,1]\right)=1.
\]
Since the mapping $(t,x)\in[0,T]\times\overline{\mathcal{D}}\mapsto \ustar(t,x)$ is almost surely continuous, one obtains the stronger property
\[
\PP\bigg(\ustar(t,x)\in[-1,1],\quad \forall~t\in[0,T],~ x\in\overline{\mathcal{D}}\bigg) = 1,
\]
meaning that almost surely one has
\[
\underset{t\in[0,T]}\sup~\underset{x\in\overline{\mathcal{D}}}\sup~|\ustar(t,x)|\le 1.
\]
Finally, recalling that the drift and diffusion coefficients $\drstar$ and $\distar$ defined by~\eqref{eq-starcoeff} are extensions of $\dr$ and $\di$, using the mild formulation of the SPDE~\eqref{eq-SPDEstar} and applying the property above, for all $t\in(0,T]$ one has
\begin{align*}
    \ustar(t,x) &= \int_\mathcal{D}G_d(t,x,y)u_0(y)\dd y + \int_0^t\int_\mathcal{D}G_d(t-s,x,y)\drstar\left(s,y,\ustar(s,y)\right)\dd y\dd s \\
    &\quad+ \int_0^t\int_\mathcal{D}G_d(t-s,x,y)\distar\left(s,y,\ustar(s,y)\right)\dd y\dd\beta(s)\\
    &=\int_\mathcal{D}G_d(t,x,y)u_0(y)\dd y + \int_0^t\int_\mathcal{D}G_d(t-s,x,y)\dr\left(s,y,\ustar(s,y)\right)\dd y\dd s \\
    &\quad+ \int_0^t\int_\mathcal{D}G_d(t-s,x,y)\di\left(s,y,\ustar(s,y)\right)\dd y\dd\beta(s).
\end{align*}
Combining the properties above shows that $\ustar$ is indeed a mild solution to the original SPDE~\eqref{eq-SPDE} in the sense of Definition~\ref{defi-mild}.
Applying the arguments presented above concludes the proof of the existence and uniqueness of a mild solution to~\eqref{eq-SPDE} in the sense of Definition~\ref{defi-mild}.

Note also that if $u$ and $\ustar$ are the unique mild solutions to~\eqref{eq-SPDE} and~\eqref{eq-SPDEstar} respectively, then one has
\begin{equation}\label{eq-uustar}
\PP\bigg(u(t,x)=\ustar(t,x),\quad \forall~t\in[0,T], ~x\in\overline{\mathcal{D}}\bigg) = 1.
\end{equation}

\subsection{Proof of Corollary~\ref{lecorollaire}}\label{sec-prooflecoro}

Let $\varepsilon\in(0,1/2)$.

Recall from Section~\ref{subMain} that the numerical solutions $u_m$ given by the DPLT splitting scheme~\eqref{eq-dplt} and $u^\star_m$ given by the auxiliary version~\eqref{eq-dpltstar} coincide: almost surely for all $m\in\{0,\ldots,M\}$ and all $x\in\overline{\mathcal{D}}$ one has $u^\star_m(x)=u_m(x)$. In addition, due to the property~\eqref{eq-uustar} established in the proof of Proposition~\ref{prop-exuniqdp} above, almost surely for all $m\in\{0,\ldots,M\}$ and all $x\in\overline{\mathcal{D}}$ one has $\ustar(t_m,x)=u(t_m,x)$.

Therefore, as a straightforward application of the strong error estimates~\eqref{convDiscont} from Theorem~\ref{thm-mainthm} one obtains
\begin{equation*}
    \sup_{0\leq m\leq M}\sup_{x\in\overline{\mathcal{D}}}\left(\E\left[\abs{u_m(x) - u(t_m,x)}^2\right]\right)^{\frac{1}{2}} = \sup_{0\leq m\leq M}\sup_{x\in\overline{\mathcal{D}}}\left(\E\left[\abs{u^\star_m(x) - \ustar(t_m,x)}^2\right]\right)^{\frac{1}{2}}\leq C_\varepsilon(T)\dt^{\frac{1}{2}-\varepsilon}.
\end{equation*} 
The proof of Corollary~\ref{lecorollaire} is thus completed.

\section{Generalisations}\label{sec:genZ}
In this section, we briefly describe three possible generalisations of the domain preserving Lie--Trotter scheme~\eqref{eq-dplt} and of the SPDE~\eqref{eq-SPDE}. In Subsection~\ref{sec:Strato} we consider SPDEs where the noise is interpreted in the Stratonovich sense}.
In Subsection~\ref{sec:mulNoise}, we consider a version of~\eqref{eq-SPDE} driven by multiple noises and the adaptation of the scheme~\eqref{eq-dplt} in that setting.
In Subsection~\ref{sec:syst}, we consider systems of SPDEs (with two components). In order to be concise, we only provide brief descriptions with numerical experiments and the detailed analysis is omitted. These generalisations are presented separately, but can certainly be combined.

\subsection{Generalisation to Stratonovich SPDE}\label{sec:Strato}

In this subsection we consider the Stratonovich version of the SPDE~\eqref{eq-SPDE}, and generalise the DPLT splitting scheme~\eqref{eq-dplt} to this setting.

We consider the following class of Stratonovich SPDEs:
\begin{equation}\label{eq-SPDEstrato}
\begin{cases}
    \dd u(t,x) = \left(\Delta u(t,x) + \dr(t,x,u(t,x))\right)\dd t+ \di(t,x,u(t,x))\circ\dd\beta(t), ~ t\in(0,T], x\in\mathcal{D}, \\
    u(t,x) = 0, \quad t\in[0,T], x\in\partial\mathcal{D}, \\
    u(0,x) = u_0(x), \quad x\in\overline{\mathcal{D}},
\end{cases}
\end{equation}
where we assume that the initial condition $u_0$ satisfies Assumption~\ref{ass-ic} and that the drift and diffusion coefficients $\dr$ and $\di$ satisfy Assumption~\ref{ass-driftdiff}.

For all $(t,x,u)\in [0,\infty)\times\overline{\mathcal{D}}\times[-1,1]$, set
\[
\overline{\dr}(t,x,u)=\dr(t,x,u)+\frac12\partial_u\di(t,x,u)\di(t,x,u).
\]
The Stratonovich SPDE~\eqref{eq-SPDEstrato} is equivalent to the It{\^o} SPDE
\begin{equation}\label{eq-SPDEstratoito}
\dd u(t,x) = \left(\Delta u(t,x) + \overline{\dr}(t,x,u(t,x))\right)\dd t+ \di(t,x,u(t,x))\dd\beta(t),\quad t\in(0,T], x\in\mathcal{D},
\end{equation}
with the same boundary and initial conditions as in~\eqref{eq-SPDEstrato}. Owing to the factorisations of the coefficients $\dr$ and $\di$ given by Lemma~\ref{la-driftdiffdecomp}, for all  $(t,x,u)\in [0,\infty)\times\overline{\mathcal{D}}\times[-1,1]$ one has
\[
\dr(t,x,u)=\drf(t,x,u)\sigma(u),\quad \di(t,x,u)=\dif(t,x,u)\sigma(u)\quad\text{and}\quad \overline{\dr}(t,x,u)=\overline{\drf}(t,x,u)\sigma(u),
\]
where the function $\overline{\drf}$ is defined by
\begin{equation}\label{eq-dfrstrato}
\overline{\drf}(t,x,u)=\drf(t,x,u)+\frac12\partial_u\di(t,x,u)\dif(t,x,u).
\end{equation}
In addition to $\dr$ and $\di$ satisfying Assumption~\ref{ass-driftdiff}, we also assume that they are such that $\overline{\dr}$ and $\di$ satisfy Assumption~\ref{ass-driftdiff}. Therefore the Stratonovich SPDE~\eqref{eq-SPDEstrato} has a unique mild solution $u$. More precisely, its equivalent It\^o formulation~\eqref{eq-SPDEstratoito} admits a unique mild solution in the sense of Definition~\ref{defi-mild}, owing to Proposition~\ref{prop-exuniqdp}. In particular, one has $u(t,x)\in[-1,1]$ for all $t\in[0,T]$ and $x\in\overline{\mathcal{D}}$ almost surely.

To obtain a domain preserving time integrator for the Stratonovich SPDE~\eqref{eq-SPDEstrato}, it suffices to apply the DPLT splitting scheme to its equivalent formulation~\eqref{eq-SPDEstratoito}. Specifically, given an integrator $\Phi:[0,\infty)\times\R\times[-1,1]$ satisfying Assumption~\ref{ass-Phi}, we introduce a time integrator for the Stratonovich SPDE~\eqref{eq-SPDEstrato} as follows: for all for $m\in\{0,1,\ldots,M-1\}$ and $x\in\overline{\mathcal{D}}$, set
\begin{equation}\label{eq-dpltstrato}
\begin{cases}
u_{m+1}(x)=\int_{\mathcal D}G_d(\dt,x,y)\Phi\big(\Ztil_{m+1}(y)\big)\dd y, \\
\Ztil_{m+1}(y)=\left(\dif(t_m,y,u_m(y))^2\dt,\overline{\drf}(t_m,y,u_m(y))\dt+\dif(t_m,y,u_m(y))\dbt_m,u_m(y)\right),
\end{cases}
\end{equation}
where $\overline{\drf}$ is given by~\eqref{eq-dfrstrato}.

Interpreting the version~\eqref{eq-dpltstrato} of the DPLT splitting scheme for the Stratonovich SPDE~\eqref{eq-SPDEstrato} as the DPLT splitting scheme~\eqref{eq-dplt} for the equivalent It\^o formulation, applying Proposition~\ref{prop-dplt} and Corollary~\ref{lecorollaire}, the following result is obtained.
\begin{corollary}\label{cor-resstrato}
	Let $T\in(0,\infty)$. Consider the mild solution $\bigl(u(t,\cdot))_{t\in[0,T]}$ to the Stratonovich SPDE~\eqref{eq-SPDEstrato} and the numerical solution $\bigl(u_m\bigr)_{0\le m\le M}$ given by the DPLT splitting scheme~\eqref{eq-dpltstrato} with time-step size $\dt=T/M$.
    Under Assumptions~\ref{ass-ic}, ~\ref{ass-driftdiff}, ~\ref{ass-Phi}, and the supplementary conditions on $\dr$ and $\di$ above, the DPLT splitting scheme is domain preserving: for any time-step size $\dt=T/M$, one has
        \begin{equation*}
        \PP\left(u_m(x)\in[-1,1],\forall m\in\{0,1,\ldots,M\},\forall x\in\overline{\mathcal{D}}\right) = 1.
    \end{equation*}
     Furthermore, the mean-square error converges to $0$ at order $\frac12-$: for all $T\in(0,\infty)$ and $\varepsilon\in(0,\frac12)$, there exists $C_\varepsilon(T)\in(0,\infty)$ such that, for all $\dt=T/M$, one has
         \begin{equation*}
        \sup_{0\leq m\leq M}\sup_{x\in\overline{\mathcal{D}}} \mse{u_m(x)-u(t_m,x)}
        \leq C_\varepsilon(T)\dt^{\frac12-\varepsilon}.
    \end{equation*}
\end{corollary}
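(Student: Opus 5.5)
The plan is to reduce Corollary~\ref{cor-resstrato} entirely to the Itô results by working with the equivalent formulation~\eqref{eq-SPDEstratoito}, whose drift is $\overline{\dr}=\overline{\drf}\sigma$ and whose diffusion is $\di=\dif\sigma$.

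First, I will fix the meaning of ``mild solution'' for the Stratonovich SPDE~\eqref{eq-SPDEstrato}: it is the mild solution, in the sense of Definition~\ref{defi-mild}, of the Itô SPDE~\eqref{eq-SPDEstratoito}. By the supplementary hypothesis, the pair $(\overline{\dr},\di)$ satisfies Assumption~\ref{ass-driftdiff}. Proposition~\ref{prop-exuniqdp} therefore gives a unique mild solution $u$ with values in $[-1,1]$.

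Second, I will identify the scheme~\eqref{eq-dpltstrato} with the scheme~\eqref{eq-dplt} applied to the coefficients $(\overline{\dr},\di)$. This requires checking that the factor of $\overline{\dr}$ produced by Lemma~\ref{la-driftdiffdecomp} is indeed the function $\overline{\drf}$ from~\eqref{eq-dfrstrato}. Since $\sigma$ is nonzero on $(-1,1)$, the factorisation is unique there, and by continuity it is unique on $[-1,1]$. The identity
\[
\overline{\dr}=\drf\sigma+\tfrac12\partial_u\di\,\dif\sigma=\overline{\drf}\sigma
\]
then shows that any continuous factor of $\overline{\dr}$ coincides with $\overline{\drf}$.

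Because of this uniqueness, the Lemma~\ref{la-driftdiffdecomp} factor of $\overline{\dr}$ is exactly $\overline{\drf}$. So~\eqref{eq-dpltstrato} is literally~\eqref{eq-dplt} for the data $(\overline{\dr},\di)$. The remaining ingredients are in place: $\overline{\drf}$ is continuous and bounded by Lemma~\ref{la-driftdiffdecomp}, the initial value satisfies Assumption~\ref{ass-ic}, and $\Phi$ satisfies Assumption~\ref{ass-Phi}.

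Third, I will apply the Itô results to these data. Corollary~\ref{corodpdplt}, which rests on Proposition~\ref{prop-dplt}, gives the almost sure domain preservation for every time-step size. Corollary~\ref{lecorollaire} gives the mean-square estimate of order $\frac12-\varepsilon$ against the mild solution $u$ of~\eqref{eq-SPDEstratoito}, which is by definition the solution of~\eqref{eq-SPDEstrato}.

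The only step I expect to need care is the second one, namely the consistency of the factorisation. The fact that the extended coefficients $\overline{\drf}^\star$ and $\overline{\dr}^\star$ built via~\eqref{eq-starcoeff} and~\eqref{eq-afstar} agree with the Itô construction is automatic from the definitions. Everything else is a direct substitution into earlier results.
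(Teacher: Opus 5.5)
Your proposal is correct and matches the paper's argument. The paper likewise reads~\eqref{eq-dpltstrato} as the scheme~\eqref{eq-dplt} for the It\^o formulation~\eqref{eq-SPDEstratoito} with coefficients $(\overline{\dr},\di)$, and then invokes Proposition~\ref{prop-dplt} and Corollary~\ref{lecorollaire}. Your extra check that the factor from Lemma~\ref{la-driftdiffdecomp} for $\overline{\dr}$ coincides with $\overline{\drf}$ is left implicit in the paper, and it is sound: both factors are continuous in $u$ and agree on $(-1,1)$, where $\sigma\neq0$.
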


Let us illustrate the domain preservation of the numerical scheme~\eqref{eq-dpltstrato} when applied to the SPDE~\eqref{eq-SPDEstrato} on the time interval $[0,20]$, and with initial value $u_0(x)=\sin(2\pi x)$ for $x\in[0,1]$. The spatial discretisation uses a mesh size of $h=2^{-8}$. We apply the integrators {\upshape EM}, {\upshape sEM}, and {\upshape SEXP} to the equivalent It\^o formulation~\eqref{eq-SPDEstratoito}, and the {\upshape DPLT} splitting scheme from~\eqref{eq-dpltstrato} with time-step size $\dt=2^{-3}$.
For several choices of the coefficients $\dr$ and $\di$ (or $\drf$ and $\dif$), we compute $100$ realisations and compute
the proportion of these realisations which remain in the domain $[-1,1]$.
Table~\ref{tabDPStrato} illustrates the fact that all realisations of the DPLT splitting scheme~\eqref{eq-dpltstrato} take values in the domain $[-1,1]$. On the contrary, one
observes that for the classical time integrators some realisations exit the domain $[-1,1]$.

\begin{table}[h]
\begin{center}
\begin{tabular}{|c | c | c | c| c| c|}
  \hline
  $\drf(t,x,u)$ & $\dif(t,x,u)$ & {\upshape DPLT} & {\upshape EM} & {\upshape sEM} & {\upshape SEXP} \\
  \specialrule{.15em}{.05em}{.05em}
  $-u$ & $2(1-u^2)$  & $100/100$  & $0/100$ & $34/100$ & $100/100$\\
  \hline
  $\sin(x\pi/2)$ & $2\cos(u\pi/2)$ & $100/100$ & $0/100$ & $55/100$ & $99/100$\\
  \hline
  $u$ & $u$ & $100/100$ & $0/100$ & $100/100$ & $100/100$\\
  \hline
  $\sin(u)+\cos(x)$ & $2\exp(u)$ & $100/100$ & $0/100$ & $4/100$ & $93/100$\\
  \hline
\end{tabular}
\end{center}
\caption{Proportion of realisations in the domain $[-1,1]$ for $100$ simulated sample paths for the time integrators:
DPLT splitting scheme~\eqref{eq-dpltstrato} ({\upshape DPLT}), Euler--Maruyama scheme ({\upshape EM}), semi-implicit Euler--Maruyama scheme ({\upshape sEM}),
and stochastic exponential Euler scheme ({\upshape SEXP}).}
\label{tabDPStrato}
\end{table}

We now provide a numerical illustration of the mean-square convergence with order $\frac12-$ obtained in Corollary~\ref{cor-resstrato}.
For this purpose we consider the Stratonovich SPDE~\eqref{eq-SPDEstrato} with initial value
$u_0(x)=\sin(2\pi x)$ for $x\in[0,1]$.
The spatial discretisation is performed with a finite difference method with mesh size $h=2^{-8}$.
For the temporal discretisation, we apply the integrators {\upshape sEM}, {\upshape SEXP} to the equivalent It\^o formulation~\eqref{eq-SPDEstratoito},
and the {\upshape DPLT} splitting scheme from~\eqref{eq-dpltstrato}.
Like in Section~\ref{subNumExp}, for the applications of the integrators {\upshape sEM} and {\upshape SEXP},
the coefficients $\overline{\drf}$ and $\dif$ are replaced with extensions $\overline{\drf}^\star$ and $\difstar$ defined on $[0,\infty)\times\overline{\mathcal{D}}\times \R$. For the drift and diffusion coefficients $\dr=\drf\sigma$ and $\di=\dif\sigma$, we consider $\drf(t,x,u)=\sin(u)e^{3u}$ and $\dif(t,x,u)=2e^u$.
In Figure~\ref{fig-strongStrato}, we plot the maximum mean-square errors for the time-step sizes $\dt=2^{-4},\ldots,2^{-15}$ on the time domain $[0,1]$.
We use the {\upshape DPLT} splitting scheme~\eqref{eq-dpltstrato} with time-step size $\dt^{\text{ref}}=2^{-16}$ as the reference solution, and take $200$ samples to approximate the expectations. We observe the rate of convergence $\frac12$ for the DPLT splitting scheme~\eqref{eq-dpltstrato} as predicted by Corollary~\ref{cor-resstrato}.
The comparison with the classical integrators {\upshape sEM} and {\upshape SEXP} justifies that the proposed DPLT splitting scheme~\eqref{eq-dpltm} is consistent with the SPDE~\eqref{eq-SPDEm}.

\begin{figure}[h]
    \centering
    \includegraphics[width=0.5\textwidth]{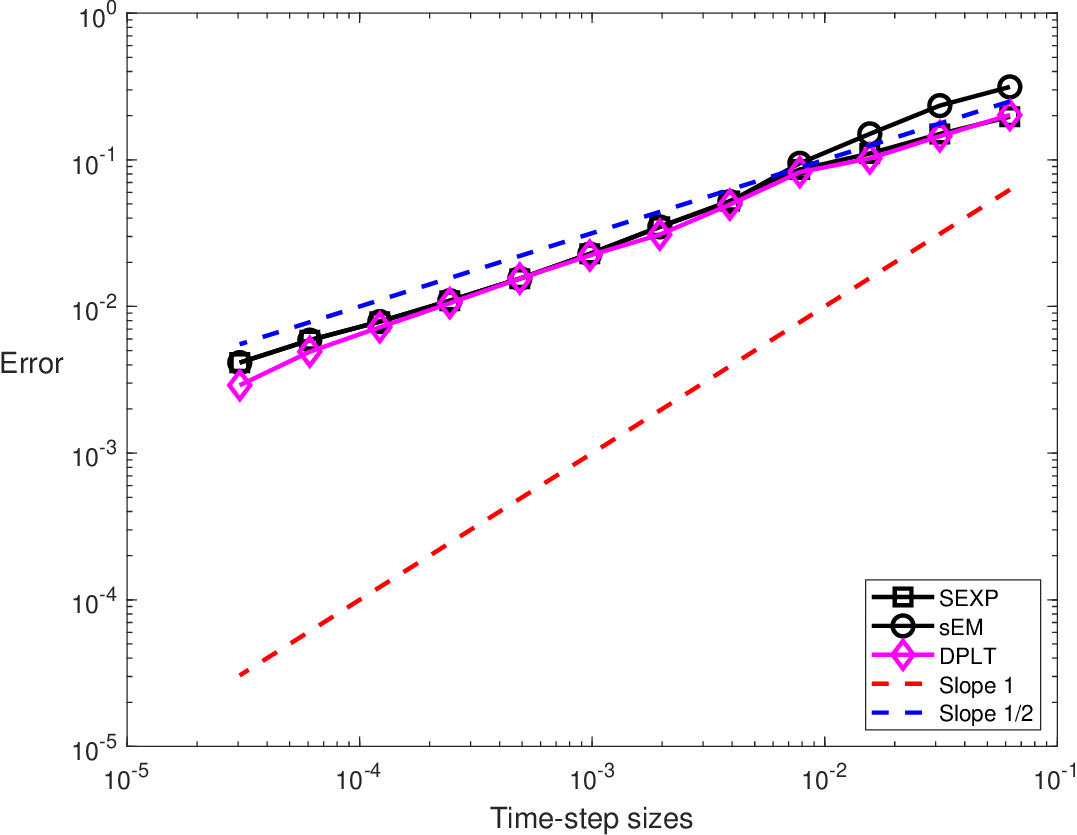}
    \caption{Maximum mean-square errors for the {\upshape sEM}, {\upshape SEXP}, and {\upshape DPLT} time integrators for the Stratonovich SPDE~\eqref{eq-SPDEstrato} on the time interval $[0,1]$ in one-dimensional space, with space-mesh size $h=2^{-8}$.}
   \label{fig-strongStrato}
\end{figure}

\subsection{Generalisation to multidimensional noise}\label{sec:mulNoise}
In this subsection, we generalise the results of this article for the SPDE~\eqref{eq-SPDE} to It\^o SPDEs driven by multi-dimensional noise.
Let $K\in\mathbb N$ and $(\beta(t))_{t\geq0}$ be a standard $\mathbb R^K$-dimensional Brownian motion, with $\beta(t)=\bigl(\beta_1(t),\ldots,\beta_K(t)\bigr)$. Let also $\di_1,\ldots,\di_K\colon[0,\infty)\times\overline{\mathcal{D}}\times[-1,1]\to \R$ be functions which satisfy the same conditions as the diffusion coefficient $\di$ in~\eqref{eq-SPDE}. We consider SPDEs (interpreted in the Itô sense) of the form:
\begin{equation}\label{eq-SPDEm}
\left\lbrace
\begin{aligned}
    &\dd u(t,x) = \bigl( \Delta u(t,x) + \dr (t,x,u(t,x)) \bigr)\dd t+g(t,x,u(t,x))\cdot\dd\beta(t), \quad t\in(0,T], x\in\mathcal{D}, \\
    &u(t,x) = 0, \quad t\in[0,T], x\in\partial\mathcal{D}, \\
    &u(0,x) = u_0(x), \quad x\in\overline{\mathcal{D}},
\end{aligned}
\right.
\end{equation}
where
$$
g(t,x,u(t,x))\cdot\dd\beta(t)=\sum_{k=1}^K\di_k(t,x,u(t,x))\dd\beta_k(t)
$$
for $g=(g_1,\ldots,g_K)$.

Under Assumption~\ref{ass-ic} for the initial value $u_0$ and Assumption~\ref{ass-driftdiff} for the drift coefficient $\dr=\drf\sigma$ and the diffusion coefficients $\di_k$,
for $k=1,\ldots,K$, Proposition~\ref{prop-exuniqdp} can be generalised: the SPDE~\eqref{eq-SPDEm} admits a unique mild solution such that almost surely $u(t,x)\in[-1,1]$ for all $t\in[0,T]$ and $x\in\overline{\mathcal{D}}$.

The definition of the DPLT splitting scheme is then adapted as follows. Since the mappings $\di_1,\ldots,\di_K$ satisfy the conditions in Assumption~\ref{ass-driftdiff}, one can factorise $\di_k=\dif_k\sigma$ for all $k=1,\ldots,K$ by Lemma~\ref{la-driftdiffdecomp}. Given an integrator $\Phi\colon[0,\infty)\times\mathbb R\times[-1,1]\to\mathbb R$ satisfying Assumption~\ref{ass-Phi}, the DPLT splitting scheme applied to the SPDE~\eqref{eq-SPDEm} driven by multidimensional noise is defined as follows: for all $m\in\{0,1,\ldots,M-1\}$ and $x\in\overline{\mathcal D}$, set
\begin{equation}\label{eq-dpltm}
\begin{cases}
\displaystyle u_{m+1}(x)=\int_{\mathcal D}G_d(\dt,x,y)\Phi\big(\Ztil_{m+1}(y)\big)\dd y, \\
\Ztil_{m+1}(y)=\left(\| \dif(t_m,y,u_m(y)) \|^2\dt,\drf(t_m,y,u_m(y))\dt+\dif_k(t_m,y,u_m(y))\cdot\dbt_{m},u_m(y)\right),
\end{cases}
\end{equation}
where we recall that $\displaystyle\| \dif(t_m,y,u_m(y)) \|^2=\sum_{k=1}^K\dif_k(t_m,y,u_m(y))^2$
for $\dif=(\dif_1,\ldots,\dif_K)$. Here, we set $\dbt_{m}=(\dbt_{1,m},\ldots,\dbt_{K,m})$ with the Brownian increments denoted as $\dbt_{k,m}=\dbt_k(t_{m+1})-\dbt_k(t_m)$
for all $k\in\{1,\ldots,K\}$ and $m\in\{0,1,\ldots,M-1\}$.

From the definition of the time integrator~\eqref{eq-dpltm}, it is straightforward to adapt the proof of Proposition~\ref{prop-dplt}, and to show that it is domain preserving,
for any time-step size $\dt=T/M$: almost surely for all $m\in\{0,\ldots,M\}$ and $x\in\overline{\mathcal{D}}$ one has $u_m(x)\in[-1,1]$. Moreover, the mean-square convergence result with order $\frac12-$ given in Corollary~\ref{lecorollaire} can also be generalised. Numerical experiments below illustrate the above properties of the DPLT splitting scheme~\eqref{eq-dpltm}.

We first illustrate the domain preservation of the numerical scheme~\eqref{eq-dpltm} when applied to the SPDE~\eqref{eq-SPDEm} with $K=2$, on the time interval $[0,20]$, and with initial value $u_0(x)=\sin(4\pi x)$ for $x\in[0,1]$.
We fix the finite-difference mesh size to $h=2^{-8}$ and apply the numerical schemes from Subsection~\ref{subNumExp} with time-step size $\dt=2^{-3}$.
For several choices of the coefficients $\dr$, $\di_1$, and $\di_2$, we compute $100$ realisations and compute the proportion of these realisations which remain in the domain $[-1,1]$.
Table~\ref{tabDPm} illustrates the fact that all realisations of the DPLT splitting scheme~\eqref{eq-dpltm} take values in the domain $[-1,1]$. On the contrary, one 
observes that for the classical time integrators some realisations exit the domain $[-1,1]$.

\begin{table}[h]
\begin{center}
\begin{tabular}{|c | c | c | c | c| c| c|}
  \hline
  $\drf(t,x,u)$ & $\dif_1(t,x,u)$ & $\dif_2(t,x,u)$ & {\upshape DPLT} & {\upshape EM} & {\upshape sEM} & {\upshape SEXP} \\
  \specialrule{.15em}{.05em}{.05em}
  $u+\cos(x)$ & $u+2\sin(t)$ &  $u$  & $100/100$  & $0/100$ & $70/100$ & $100/100$\\
  \hline
  $u$ & $u$ & $u$ & $100/100$ & $0/100$ & $100/100$ & $100/100$\\
  \hline
  $\sin(x\pi/2)$ & $2\cos(u\pi/2)$ & $\sin(5u\pi/2)$ & $100/100$ & $0/100$ & $32/100$ & $95/100$\\
  \hline
  $-u$ & $2(1-u^2)$ & $\exp(u)$ & $100/100$ & $0/100$ & $13/100$ & $96/100$\\
  \hline
\end{tabular}
\end{center}
\caption{Proportion of realisations in the domain $[-1,1]$ for $100$ simulated sample paths for the time integrators:
DPLT splitting scheme~\eqref{eq-dpltm} ({\upshape DPLT}), Euler--Maruyama scheme ({\upshape EM}), semi-implicit Euler--Maruyama scheme ({\upshape sEM}),
and stochastic exponential Euler scheme ({\upshape SEXP}).}
\label{tabDPm}
\end{table}

We now verify that the DPTL splitting scheme~\eqref{eq-dpltm} converges in the mean-square sense with order $\frac12-$, as predicted by the generalization of Corollary~\ref{lecorollaire} to the multidimensional noise setting mentioned above. We consider the SPDE~\eqref{eq-SPDEm} with $K=2$ and with initial value
$u_0(x)=\sin(2\pi x)$ for $x\in[0,1]$. We discretise the problem in space using standard finite differences
with mesh size $h=2^{-8}$, and we apply the time integrators {\upshape sEM}, {\upshape SEXP} adapted to the multidimensional noise setting,
and the {\upshape DPLT} scheme~\eqref{eq-dpltm}. As in Subsection~\ref{subNumExp}, the classical integrators {\upshape sEM} and {\upshape SEXP} are applied with the extensions $\drstar$, $\distar_1$ and $\distar_2$ of the coefficients $\dr$, $\di_1$ and $\di_2$ given by equation~\eqref{eq-starcoeff}, to ensure their mean-square convergence with order $\frac12-$ as the coefficients satisfy appropriate global Lipschitz continuity properties with respect to the third variable $u$.
For the coefficients $\drf$, $\dif_1$, and $\dif_2$ associated by Lemma~\ref{la-driftdiffdecomp} to $\dr$, $\di_1$ and $\di_2$, we choose $\drf(t,x,u) = \abs{u}$, $\dif_1(t,x,u) = 2\left(u^2+\abs{x-0.5}\right)$, and $\dif_2(t,x,u) = 2\left(u+\sqrt{t}\right)$. In Figure~\ref{fig-strong2BM}, the maximum of the mean-square errors is displayed for
the time-step sizes $\dt=2^{-4},\ldots,2^{-15}$ on the time interval $[0,1]$. The reference solution is computed by applying the DPLT splitting scheme~\eqref{eq-dpltm}
with time-step size $\dt^{\text{ref}}=2^{-16}$. We use $200$ samples to approximate the expectations.
We observe the rate of convergence $\frac12$ for the DPLT splitting scheme~\eqref{eq-dpltm}, as predicted by the expected generalisation of Corollary~\ref{lecorollaire}.
The comparison with the classical integrators {\upshape sEM} and {\upshape SEXP} justifies that the proposed DPLT splitting scheme~\eqref{eq-dpltm} is consistent with the SPDE~\eqref{eq-SPDEm}.

\begin{figure}[h]
    \centering
    \includegraphics[width=0.5\textwidth]{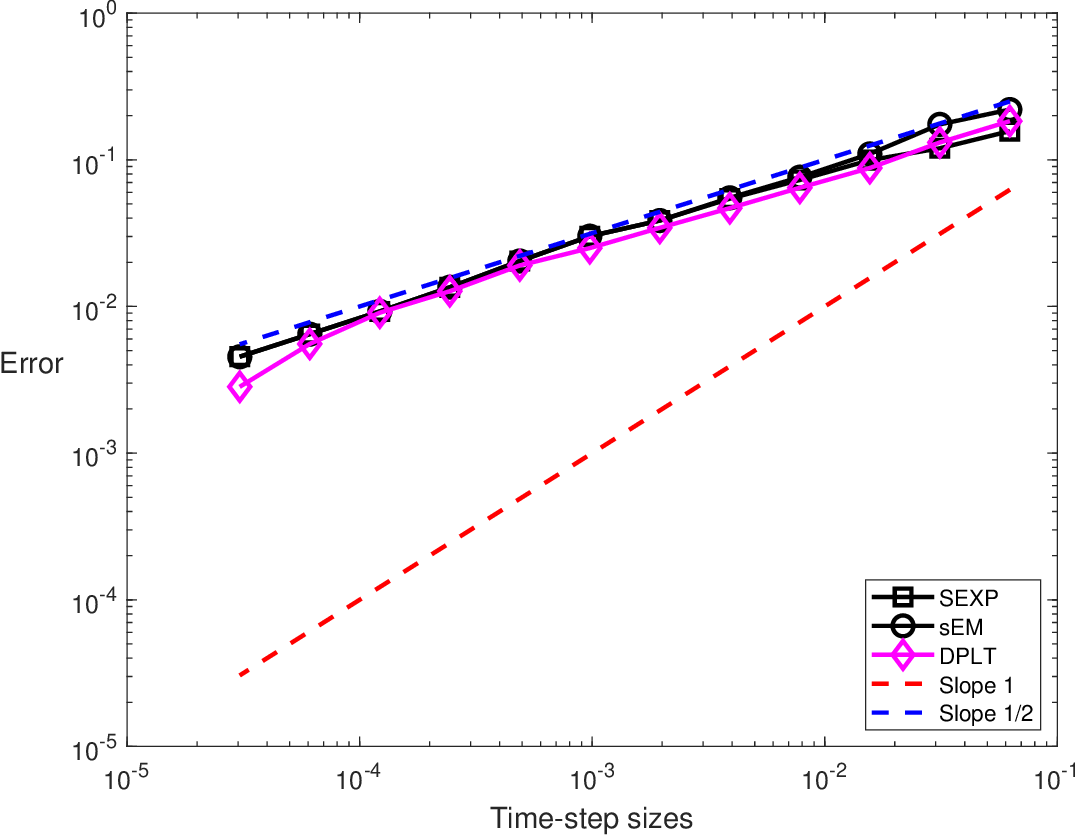}
    \caption{Maximum mean-square errors for the {\upshape sEM}, {\upshape SEXP}, and {\upshape DPLT} time integrators on the time interval $[0,1]$ for the SPDE~\eqref{eq-SPDEm} in one-dimensional space, with a two-dimensional noise ($K=2$).}
   \label{fig-strong2BM}
\end{figure}

\subsection{Generalisation to systems}\label{sec:syst}

In the following subsection, we explain how to extend the definition of the DPLT splitting scheme~\eqref{eq-dplt} to systems of SPDEs instead of scalar SPDEs~\eqref{eq-SPDE}. For simplicity we consider systems of dimension two, however the generalization to arbitrary dimension is straightforward. Let $(\beta_1(t))_{t\geq0}$ and $(\beta_2(t))_{t\geq0}$
be two (equal or independent) real-valued Brownian motions. We consider systems of SPDEs of the following form, where the unknowns are stochastic processes $\bigl(u_1(t,x)\bigr)_{t\in[0,T],x\in\overline{\mathcal{D}}}$ and $\bigl(u_2(t,x)\bigr)_{t\in[0,T],x\in\overline{\mathcal{D}}}$:
\begin{equation}\label{eq-SPDEsyst}
\begin{cases}
    \dd u_1(t,x) = \left(\Delta u_1(t,x) + \dr_1(t,x,u_1(t,x),u_2(t,x))\right)\dd t \\
    \qquad\qquad\quad + \di_1(t,x,u_1(t,x),u_2(t,x))\dd\beta_1(t), \quad t\in(0,T], x\in\overline{\mathcal{D}}, \\
    \dd u_2(t,x) = \left(\Delta u_2(t,x) + \dr_2(t,x,u_1(t,x),u_2(t,x))\right)\dd t \\
    \qquad\qquad\quad + \di_2(t,x,u_1(t,x),u_2(t,x))\dd\beta_2(t), \quad t\in(0,T], x\in\overline{\mathcal{D}}, \\
    u_1(t,x) = u_2(t,x) = 0, \quad t\in[0,T], x\in\partial\mathcal{D}, \\
    u_1(0,x) = u_{1,0}(x), \quad u_2(0,x) = u_{2,0}(x), \quad x\in\overline{\mathcal{D}}.
\end{cases}
\end{equation}
The initial values $u_{1,0}$ and $u_{2,0}$ satisfy Assumption~\ref{ass-ic}. The drift and diffusion coefficients $\dr_1,\dr_2,\di_1,\di_2\colon[0,\infty)\times\overline{\mathcal{D}}\times[-1,1]^2\to\R$ are continuous mappings which satisfy variants of Assumptions~\ref{ass-driftdiff-regt} and~\ref{ass-driftdiff-regu}, and the following variant of Assumption~\ref{ass-driftdiff-vanish}: for all $t\in[0,\infty)$, $x\in\overline{\mathcal{D}}$ and $u\in[-1,1]$, one has
\[
\dr_1(t,x,\pm 1,u)=\di_1(t,x,\pm 1,u)=0\quad\text{and}\quad\dr_2(t,x,u,\pm 1)=\di_2(t,x,u,\pm 1)=0.
\]
Under the assumptions above, a version of Proposition~\ref{prop-exuniqdp} is valid:  there exists a unique mild solution to~\eqref{eq-SPDEsyst} such that almost surely $u(t,x)=\bigl(u_1(t,x),u_2(t,x)\bigr)\in[-1,1]^2$ for all $t\in[0,T]$ and $x\in\overline{\mathcal{D}}$.

The definition of the DPLT splitting scheme~\eqref{eq-dplt} is then adapted in the following way. Owing to the condition above, by Lemma~\ref{la-driftdiffdecomp}, the drift and diffusion coefficients $\dr_1,\dr_2,\di_1,\di_2\colon[0,\infty)\times\overline{\mathcal{D}}\times[-1,1]^2\to\R$ can be factorised as follows: there exist mappings $\drf_1,\drf_2,\dif_1,\dif_2\colon[0,\infty)\times\overline{\mathcal{D}}\times[-1,1]^2\to\R$ such that for all $(t,x)\in[0,\infty)\times\overline{\mathcal{D}}$ and $(u_1,u_2)\in[-1,1]^2$ one has
\begin{align*}
&    \dr_1(t,x,u_1,u_2) = \drf_1(t,x,u_1,u_2)\sigma(u_1) \quad \text{and} \quad \dr_2(t,x,u_1,u_2) = \drf_2(t,x,u_1,u_2)\sigma(u_2),\\
&    \di_1(t,x,u_1,u_2) = \dif_1(t,x,u_1,u_2)\sigma(u_1) \quad \text{and} \quad \di_2(t,x,u_1,u_2) = \dif_2(t,x,u_1,u_2)\sigma(u_2),
\end{align*}
where we recall that $\sigma(u) = (u-1)(u+1)$ for all $u\in[-1,1]$. Given an integrator $\Phi:[0,\infty)\times\R\times[-1,1]$ which satisfies Assumption~\ref{ass-Phi}, the DPLT splitting scheme applied to the two-dimensional system of SPDEs~\eqref{eq-SPDEsyst} is defined as follows: denoting $u_m(x)=(u_{m,1}(x),u_{m,2}(x))$, for all $m\in\{0,1,\ldots,M-1\}$ and $x\in\overline{\mathcal{D}}$, set
\begin{equation}\label{eq-dpltsyst}
\begin{cases}
    \displaystyle u_{m+1,j}(x) = \int_\mathcal{D}G_d(\dt,x,y)\Phi\left(\Ztil_{m+1,j}(y)\right)\dd y, \\
    \Ztil_{m+1,j}(y)=\left(\dif_j(\UMY)^2\dt,\drf_j(\UMY)\dt + \dif_j(\UMY)\delta\beta_{m,j},u_{m,j}(y)\right),
\end{cases}
\end{equation}
where $\UMY=(t_m,y,u_{m,1}(y),u_{m,2}(y))$ and the Brownian increments are defined as $\delta\beta_{m,j} = \beta_j(t_{m+1})-\beta_j(t_m)$ for all $j=1,2$ and $m\in\{0,1,\ldots,M-1\}$.

From the definition of the time integrator~\eqref{eq-dpltsyst}, a straightforward adaptation of the proof of Proposition~\ref{prop-dplt} shows that it is domain preserving for any time-step size $\dt=T/M$: almost surely for all $m\in\{0,\ldots,M\}$ and $x\in\overline{\mathcal{D}}$, one has $u_m(x)=\bigl(u_{m,1}(x),u_{m,2}(x)\bigr)\in[-1,1]^2$. Moreover, the mean-square convergence result with order $\frac12-$ given in Corollary~\ref{lecorollaire} can also be generalised. Numerical experiments below illustrate the above properties of the DPLT splitting scheme~\eqref{eq-dpltsyst}.

We first illustrate the domain preservation of the DPLT splitting scheme~\eqref{eq-dpltsyst} when applied to the two-dimensional system of SPDEs~\eqref{eq-SPDEsyst}
with two independent Brownian motions, on the time interval $[0,20]$ with initial values $u_{1,0}(x)=\sin(4\pi x)$ and $u_{2,0}(x)=\sin(2\pi x)$ for $x\in[0,1]$.
We fix the finite-difference mesh size to $h=2^{-8}$ and apply the above numerical schemes with time-step size $\dt=2^{-3}$.
For several choices of the coefficients of the system of SPDEs, we compute $100$ realisations and compute the proportion of these realisations which remain in the domain $[-1,1]^2$, precisely the proportions of these realisations such that $u_j$ remains in $[-1,1]$ for $j=1,2$.
Table~\ref{tabDPsyst} illustrates the fact that all realisations of the DPLT splitting scheme~\eqref{eq-dpltsyst} take values in the domain $[-1,1]^2$. On the contrary, one observes that for the classical time integrators some realisations exit the domain $[-1,1]^2$.

\begin{table}[h]
\begin{center}
\resizebox{\columnwidth}{!}{%
\begin{tabular}{|c | c | c | c| c| c|}
  \hline
  $(\drf_1,\drf_2)(t,x,u_1,u_2)$ & $(\dif_1,\dif_2)(t,x,u_1,u_2)$ & {\upshape DPLT} ($u_1,u_2$) & {\upshape EM} ($u_1,u_2$)& {\upshape sEM} ($u_1,u_2$)& {\upshape SEXP} ($u_1,u_2$)\\
  \specialrule{.15em}{.05em}{.05em}
  $(u_1^2+u_2^2,\exp(u_1+u_2))$ & $(u_1u_2,2(\cos(u_1)+\sin(u_2)))$ & $100/100, 100/100$  & $0/100, 0/100$ & $99/100, 31/100$ & $100/100, 96/100$\\
  \hline
  $(x^2+t,u_2)$ & $(u_1+\sin(t),\exp(u_2)\exp(x))$ & $100/100, 100/100$ & $0/100, 0/100$ & $0/100, 40/100$ & $60/100, 100/100$\\
  \hline
  $(u_2,u_1)$ & $(u_1,u_2)$ & $100/100, 100/100$ & $0/100, 0/100$ & $100/100, 100/100$ & $100/100, 100/100$\\
  \hline
\end{tabular}
}
\end{center}
\caption{Proportion of realisations in the domain $[-1,1]^2$ for $100$ simulated sample paths for the time integrators:
DPLT splitting scheme~\eqref{eq-dpltsyst} ({\upshape DPLT}), Euler--Maruyama scheme ({\upshape EM}), semi-implicit Euler--Maruyama scheme ({\upshape sEM}),
and stochastic exponential Euler scheme ({\upshape SEXP}). The SPDE~\eqref{eq-SPDEsyst} has two independent Brownian motions $\beta_1$ and $\beta_2$.}
\label{tabDPsyst}
\end{table}

We repeat these numerical experiments but consider the two-dimensional system of SPDEs~\eqref{eq-SPDEsyst} with equal Brownian motions $\beta_1=\beta_2$.
The results are presented in Table~\ref{tabDPsyst1}. The conclusions are the same as for Table~\ref{tabDPsyst}.

\begin{table}[h]
\begin{center}
\resizebox{\columnwidth}{!}{%
\begin{tabular}{|c | c | c | c| c| c|}
  \hline
  $(\drf_1,\drf_2)(t,x,u_1,u_2)$ & $(\dif_1,\dif_2)(t,x,u_1,u_2)$ & {\upshape DPLT} ($u_1,u_2$) & {\upshape EM} ($u_1,u_2$)& {\upshape sEM} ($u_1,u_2$)& {\upshape SEXP} ($u_1,u_2$)\\
  \specialrule{.15em}{.05em}{.05em}
  $(u_1^2+u_2^2,\exp(u_1+u_2))$ & $(u_1u_2,2(\cos(u_1)+\sin(u_2)))$ & $100/100, 100/100$  & $0/100, 0/100$ & $99/100, 34/100$ & $100/100, 98/100$\\
  \hline
  $(x^2+t,u_2)$ & $(u_1+\sin(t),\exp(u_2)\exp(x))$ & $100/100, 100/100$ & $0/100, 0/100$ & $0/100, 44/100$ & $61/100, 100/100$\\
  \hline
  $(u_2,u_1)$ & $(u_1,u_2)$ & $100/100, 100/100$ & $0/100, 0/100$ & $100/100, 100/100$ & $100/100, 100/100$\\
  \hline
\end{tabular}
}
\end{center}
\caption{Proportion of realisations in the domain $[-1,1]^2$ for $100$ simulated sample paths for the time integrators:
DPLT splitting scheme~\eqref{eq-dpltsyst} ({\upshape DPLT}), Euler--Maruyama scheme ({\upshape EM}), semi-implicit Euler--Maruyama scheme ({\upshape sEM}),
and stochastic exponential Euler scheme ({\upshape SEXP}). The SPDE~\eqref{eq-SPDEsyst} has equal Brownian motions $\beta_1=\beta_2$.}
\label{tabDPsyst1}
\end{table}

We now verify that the DPLT splitting scheme~\eqref{eq-dpltsyst} converges with order $\frac12-$. We test two cases: the Brownian motions $\beta_1$ and $\beta_2$ are either independent or equal.
We consider the system of SPDEs~\eqref{eq-SPDEsyst} with initial conditions $u_{1,0}(x)=\sin(2\pi x)$ and $u_{2,0}(x)=\sin(2\pi x)$ for $x\in[0,1]$ and discretise in space using finite differences with mesh size $h=2^{-8}$. 
We apply the time integrators {\upshape sEM}, {\upshape SEXP}, and the {\upshape DPLT} splitting scheme~\eqref{eq-dpltsyst} on the time interval $[0,1]$.
As in Subsection~\ref{subNumExp}, the classical integrators {\upshape sEM} and {\upshape SEXP} are applied with appropriate extensions $\drstar_j$ and $\distar_j$ of the coefficients $\dr_j$ and $\di_j$ given by a variant of equation~\eqref{eq-starcoeff}, to ensure their mean-square convergence with order $\frac12-$ as the coefficients satisfy appropriate global Lipschitz continuity properties in the third and fourth variables $u_1$ and $u_2$.
The following drift and diffusion coefficients are considered: 
\begin{align*}
    \drf_1(t,x,u_1,u_2)=\sin(u_1)\sin(u_2) &\text{ and } \drf_2(t,x,u_1,u_2)=\cos(u_1)\sin(u_2) \\
    \dif_1(t,x,u_1,u_2)=3\sin(u_1)\cos(u_2) &\text{ and } \dif_2(t,x,u_1,u_2)=5\cos(u_1)\cos(u_2).
\end{align*}
Figure~\ref{fig-strongsyst} displays the maximum mean-square errors for the time-step sizes $\dt=2^{-4},\ldots,2^{-15}$. 
The reference solution is the {\upshape DPLT} splitting scheme~\eqref{eq-dpltsyst} with time-step size $\dt^{\text{ref}}=2^{-16}$, and we use $200$ samples to approximate the expectations. 
Both for independent Brownian motions $\beta_1$ and $\beta_2$ (left) and for equal Brownian motions $\beta_1=\beta_2$ (right), we observe the rate of convergence $\frac12$ agrees with the theoretical rate of $\frac{1}{2}$, as predicted by the expected generalisation of Corollary~\ref{lecorollaire}. The comparison with the classical integrators {\upshape sEM} and {\upshape SEXP} justifies that the proposed DPLT splitting scheme~\eqref{eq-dpltm} is consistent with the SPDE~\eqref{eq-SPDEm}.

\begin{figure}[h]
    \centering
    \begin{subfigure}{.45\textwidth}
      \centering
      \includegraphics[width=\textwidth]{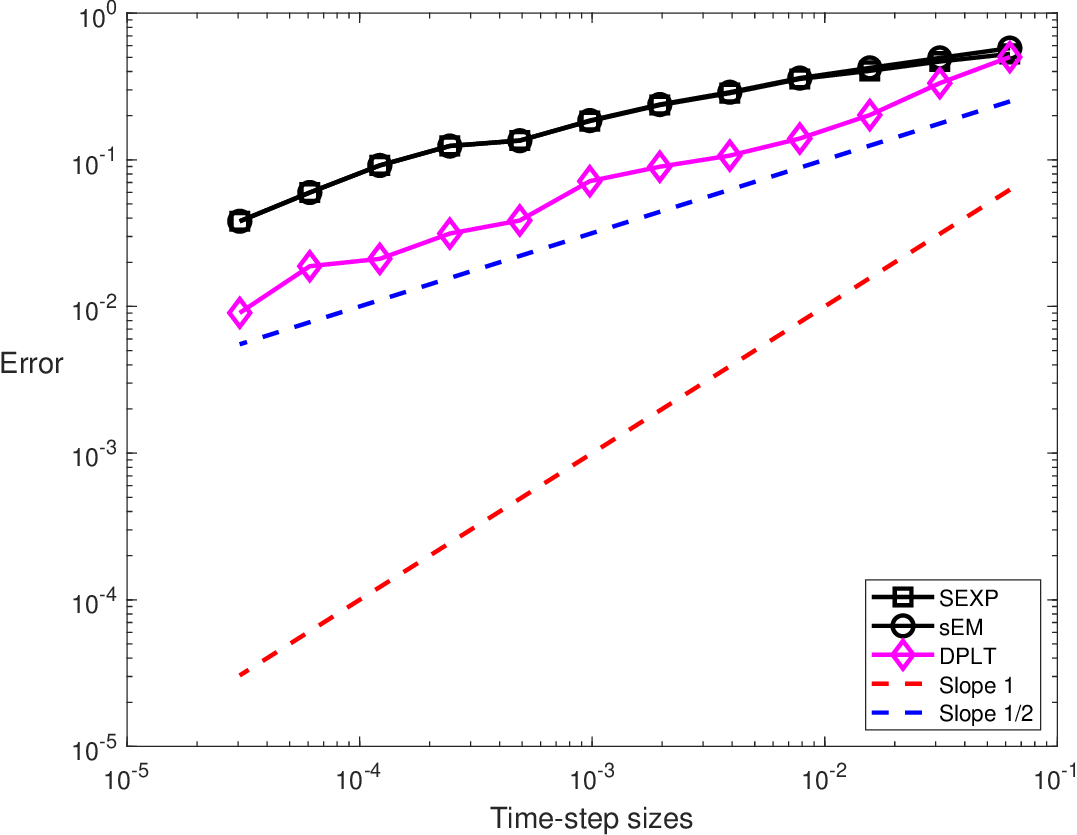}
      \caption{Independent Brownian motions $\beta_1$ and $\beta_2$}
    \end{subfigure}
    \begin{subfigure}{.45\textwidth}
      \centering
      \includegraphics[width=\textwidth]{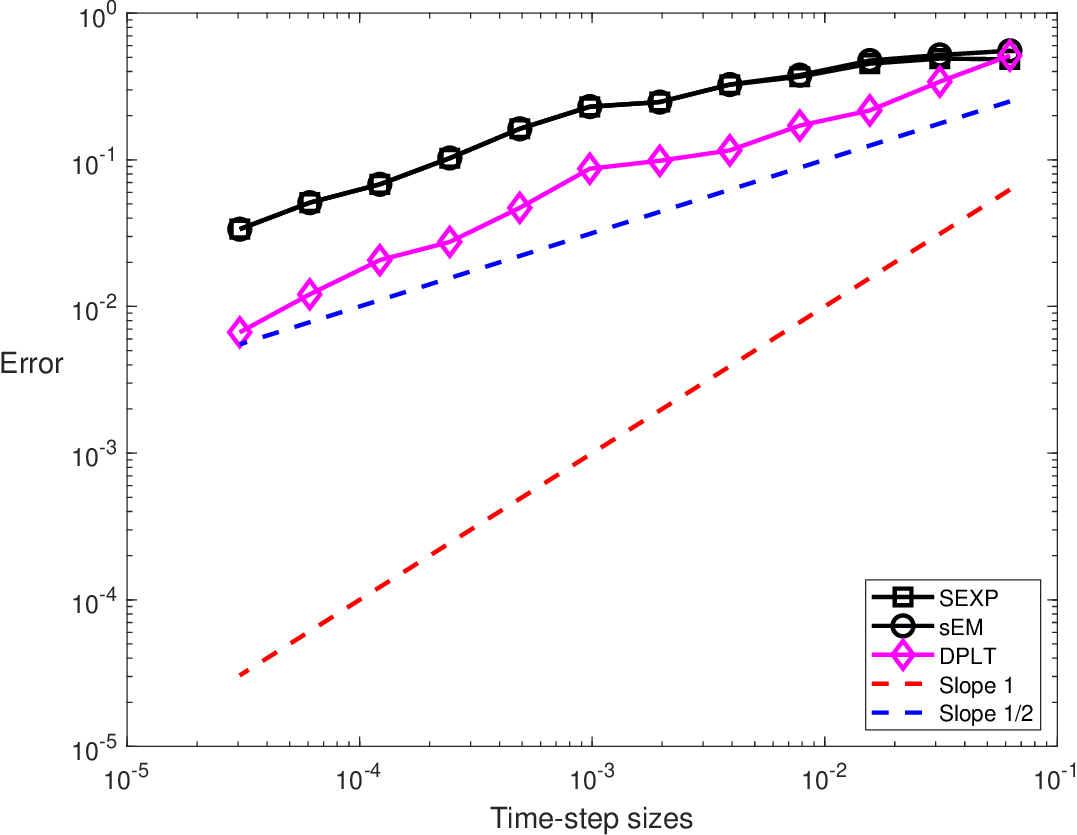}
      \caption{Equal Brownian motions $\beta_1=\beta_2$}
    \end{subfigure}%
    \caption{Maximum mean-square errors for the {\upshape sEM}, {\upshape SEXP}, and {\upshape DPLT} time integrators on the time interval $[0,1]$ for the system of SPDEs~\eqref{eq-SPDEsyst} in one-dimensional space, with space-mesh size $h=2^{-8}$.}
   \label{fig-strongsyst}
\end{figure}

\bibliographystyle{plain}
\bibliography{labib}

\section{Acknowledgements}
This work was initiated thanks to the support of the SFVE-A program.
The work of DC and GC was partially supported by the Swedish Research Council (VR) (project nr. $2024-04536$).
The computations were performed on resources provided by
the National Academic Infrastructure for Supercomputing in Sweden (NAISS) at Vera, Chalmers e-Commons
at Chalmers University of Technology and partially funded by the Swedish Research Council
through grant agreement no. 2022-06725.

\end{document}